\def\max{\mbox{max}\,}
\newtheorem{example}{Example}[section]
\crefname{hypothesis}{Hypothesis}{Hypotheses}
\title{Randomized Quaternion UTV Decomposition and Randomized Quaternion Tensor UTV Decomposition\thanks{Submitted to the editors DATE.
\funding{This work was funded by the University of Macau (MYRG2022-00108-FST, MYRG-CRG2022-00010-ICMS).}}}
\author{Liqiao Yang\thanks{Research Institute for Digital Economy and Interdisciplinary Sciences, Southwestern University of Finance and Economics, Chengdu, Sichuan, 611130, China
  (\email{liqiaoyoung@163.com}).}
\and Jifei Miao\thanks{
	School of Mathematics and Statistics, Yunnan University, Kunming, Yunnan, 650091, China
	(\email{jifmiao@163.com},  \email{yanlnzhang@163.com}, \email{kikou@umac.mo}).}
\and Tai-Xiang Jiang \thanks{School of Computing and Artificial Intelligence, Southwestern University of Finance and Economics, Chengdu, Sichuan, 611130, China
  (\email{taixiangjiang@gmail.com}).}
\and Yanlin Zhang  \footnotemark[5]
\and Kit Ian Kou 
\thanks{Department of  Mathematics, Faculty of Science and Technology, University of Macau, Macau 999078, China 
	(\email{yanlnzhang@163.com}, \email{kikou@umac.mo}).}
}
\begin{document}
\nolinenumbers
\maketitle

\begin{abstract}
In this paper, the quaternion matrix  UTV (QUTV) decomposition and quaternion tensor UTV (QTUTV) decomposition are proposed. To begin, the terms QUTV and QTUTV are defined, followed by the algorithms. Subsequently, by employing random sampling from the quaternion normal distribution, randomized QUTV and randomized QTUTV are generated to provide enhanced algorithmic efficiency. These techniques produce decompositions that are straightforward to understand and require minimal cost. Furthermore, theoretical analysis is discussed. Specifically, the upper bounds for approximating QUTV on the rank-$K$ and QTUTV on the TQt-rank $K$ errors are provided, followed by deterministic error bounds and average-case error bounds for the randomized situations, which demonstrate the correlation between the accuracy of the low-rank approximation and the singular values. Finally, numerous numerical experiments are presented to verify that the proposed algorithms work more efficiently and with similar relative errors compared to other comparable decomposition methods. For the novel decompositions, the theory analysis offers a solid theoretical basis and the experiments show significant potential for the associated processing tasks of color images and color videos. 
\end{abstract}

\begin{keywords}
Quaternion UTV, Randomized algorithm, Quaternion tensor, Deterministic error.
\end{keywords}

\begin{MSCcodes}
68U10,15A83,65F30,65K10,11R52
\end{MSCcodes}

\section{Introduction}
\label{sec:intro}

As the concept of quaternion is proposed by Hamilton in 1843 \cite{doi:10.1080/14786444408644923}, quaternions and quaternion matrices are gradually applied to various fields. Such as Bayro \textit{et al.} pioneered in the application of quaternion to robotics \cite{DBLP:conf/icpr/DaniilidisB96, DBLP:journals/tii/LiuWZ15, DBLP:journals/firai/SunHFXYMW23, DBLP:journals/ral/WuWZY24} and neurocomputing \cite{DBLP:conf/icnn/Bayro-Corrochano96, DBLP:journals/ijon/LiuWCL24, DBLP:journals/fss/LiC24, DBLP:journals/tfs/ChenZYSLW24}, etc. Besides, computer vision based on quaternion representation also has  received widespread attention, such as image restoration or segmentation \cite{DBLP:journals/pr/MiaoKL20, DBLP:journals/siamis/ChenN22, DBLP:journals/jscic/WuMLZZ22}, face recognition \cite{DBLP:journals/jscic/KeMJXL23, DBLP:journals/tip/LiuKMC23, DBLP:journals/sigpro/LingLYJ22}, and watermarking \cite{DBLP:journals/sigpro/ChenJPPZ21, DBLP:journals/sigpro/ZhangDLSL23, DBLP:journals/tip/ChenJPP23}, etc. For the approaches based on quaternion representation, a vital problem is to find a cost-saving expression. Since many data matrices are low-rank inherently, there are various approaches to get the low-rank expression to approximate the original data. 

One of the most straightforward approaches for optimizing the description of low rankness in a quaternion matrix is to utilize Quaternion Singular Value Decomposition (QSVD).  Similar to the real cases, the rank of quaternion matrix is defined as the number of non-zero singular values, and the quaternion nuclear norm (QNN) as an efficient replacement of rank has been well researched. In specific, the weighted QNN \cite{DBLP:journals/pr/HuangLLWZ22}, the weighted Schatten p-norm \cite{DBLP:journals/tip/ChenXZ20}, etc, have been proposed to optimize QNN. Another low rank approximation method is low-rank factorization. In \cite{DBLP:journals/tsp/MiaoK20, DBLP:journals/siamis/PanN23}, the target quaternion matrix is factorized to the product of two small size quaternion matrices such that the low rankness is depicted by this two smaller quaternion matrices. Besides, some classic factorization also have been developed in the quaternion domain. Such as quaternion QR (QQR) decomposition \cite{DBLP:journals/sigpro/ChenJPPZ21}, quaternion LU decomposition \cite{DBLP:journals/cphysics/WangM13, DBLP:journals/na/ZhangWLZ24}, etc. Further, to optimize the expensive process of obtaining QSVD and improve the efficiency of computation, randomized algorithms is introduced to QSVD \cite{DBLP:journals/siamsc/LiuLJ22}. Hereafter, the randomized quaternion QR with Pivoting and Least Squares (QLP) decomposition \cite{DBLP:journals/jscic/RenMLB22} and quaternion-based CUR decomposition \cite{doi:10.1080/01630563.2024.2318602} are proposed to enrich the numerical algorithms of quaternion matrices.

In the case of a quaternion tensor, analogous to real tensors, the definition of rank is not unique. Therefore, various factorizations of the quaternion tensor would result in different definitions of rank. The concept of Tucker rank for a quaternion tensor is introduced in \cite{DBLP:journals/pr/MiaoKL20} for the purpose of handling color images and color videos. In this sense, a color video is seen as a pure quaternion tensor of third order. In addition, a flexible transform-based tensor product called the $\star_{QT}$-product is introduced, along with its associated  Quaternion Tensor Singular Value Decomposition  known as TQt-SVD, as specified in the paper by Miao and Kou \cite{DBLP:journals/sigpro/MiaoK23}. The efficacy of TQt-rank is further demonstrated by the utilization of low-rank approximation in color videos. In addition, the quaternion tensor train is also introduced in the reference \cite{DBLP:journals/kbs/MiaoKYC24}.

While there has been extensive research on approximating quaternion matrices and quaternion tensors, the majority of studies have mostly concentrated on optimizing low-rank descriptions rather than exploring more efficient factorization approaches. In this paper, the UTV decomposition is specifically developed for quaternion matrices and quaternion tensors to address the existing gap. A UTV decomposition of an $m \times n$ matrix is the result of multiplying an orthogonal matrix $\mathbf{U}$, a middle triangular matrix $\mathbf{T}$, and another orthogonal matrix $\mathbf{V}$ together \cite{DBLP:journals/tsp/Stewart92, DBLP:journals/siammax/Stewart93, DBLP:journals/na/FierroH97}. Hence, the UTV decomposition is classified into two types based on whether the middle triangular matrix is an upper triangular matrix (referred to as URV decomposition) or a lower triangular matrix (referred to as ULV decomposition).  Specifically, when considering the URV decomposition of a real matrix $\mathbf{A}$, the primary computational procedure involves doing the QR with column pivoting (QRCP) decomposition twice:  $\mathbf{A}^H=\mathbf{Q}_1\mathbf{R}_1\mathbf{P}_1^T$  and $(\mathbf{R}_1\mathbf{P}_1^T)^H=\mathbf{Q}_2\mathbf{R}_2\mathbf{P}_2^T$, where $\mathbf{P}_i$ is permutation matrix. Such that $\mathbf{A}=\mathbf{U}\mathbf{R}\mathbf{V}^H$, where $\mathbf{U}=\mathbf{Q}_2$, $\mathbf{R}=\mathbf{R}_2$, and $\mathbf{V}=\mathbf{Q}_1\mathbf{P}_2$ \cite{DBLP:journals/siamsc/Stewart99}. The developed QUTV can be seen as an evolution from UTV in the real domain to UTV in the quaternion domain. Moreover, utilizing the $\star_{QT}$-product, the QUTV builds up to quaternion tensors. The $\star_{QT}$-product is defined via a flexible transform. By employing the quaternion Discrete Fourier Transformation (QDFT) and the quaternion Discrete Cosine Transformation (QDCT), among others. The QTUTV algorithm involves reshaping the $N$th-order quaternion tensor into a third-order quaternion tensor. Then, the QRCP is applied twice to each frontal slice of this modified tensor in the altered domain.

Because the created decomposition acts on the entire original quaternion, the calculation costs are substantial. To achieve more efficient computing, the randomized technique is applied to QUTV and QTUTV. This is prompted by the excellent performance in randomized QSVD \cite{DBLP:journals/siamsc/LiuLJ22}. Further, we apply this randomized technique to quaternion tensors. This indicates the initial application of the randomized technique to a quaternion tensor, as far as we are aware.
Moreover, this study analyzes error boundaries and average-case error, building on the work of randomized QSVD in \cite{DBLP:journals/siamsc/LiuLJ22} and the real URV decomposition in \cite{DBLP:journals/tsp/KalooraziL18}.

To conclude, the primary contribution of this study can be summarized as follows.
\begin{itemize}
	\item The UTV decomposition is proposed for quaternion matrix by employing quaternion QRCP (QQRCP). Further,  based on the $\star_{QT}$-product, the QUTV decomposition is generated to  quaternion tensor, named QTUTV. 
	
	\item In order to enhance the efficiency of algorithms and reduce computational expenses, the QUTV and QTUTV decomposition methods have been supplemented with a randomized approach. In addition, the upper bounds for the proposed methods are given, and the average error bounds on randomized QUTV and randomized QTUTV are established. 
	
	\item The numerical results demonstrate that the created QUTV achieves comparable relative errors to QSVD while using less time. Similarly, QTUTV can be likened to QTSVD. The efficiency of randomized QUTV and randomized QTUTV is proved in the randomized cases, with an acceptable relative error compared to the randomized QSVD and randomized QTSVD methods, respectively.
	
\end{itemize}

The rest of this paper consists of six more sections.  Section \ref{sec:1} provides an overview of the relevant notations and fundamental concepts relating to quaternion matrices and quaternion tensors. In Section \ref{sec:2}, new quaternion-based QUTV and QTUTV methods are introduced and described in detail. In Section \ref{sec:3}, randomized QURV and randomized QTURV are introduced, and theoretical analysis is provided in Section \ref{TA}. Section \ref{E} demonstrates the effectiveness of the developed methods by comparing numerical experiments with other relevant algorithms. The conclusion is stated in Section \ref{C}. Relevant proofs are available in the Appendix.

\section{Notations and preliminaries}\label{sec:1}

\subsection{Notations}
The symbols a, $\textbf{a}$, $\textbf{A}$, and $\mathcal{A}$ represent scalar, vector, matrix, and tensor quantities in the real number field $\mathbb{R}$, respectively. The dot notation is employed to represent the scalar, vector, matrix, and tensor variables in the quaternion domain $\mathbb{H}$. Specifically, $\dot{a}$ denotes a scalar, $\dot{\textbf{a}}$ represents a vector, $\dot{\textbf{A}}$ signifies a matrix, and $\dot{\mathcal{A}}$ indicates a tensor. In addition, the complex space is represented by the symbol $\mathbb{C}$. The symbol $\textbf{I}_{r}$ denotes the identity matrix of size $r\times r$, while $\textbf{O}_{r_1\times r_2}$ represents the matrix of zeros with dimensions $r_1\times r_2$.  The symbols $\mathbf{(\cdot)}^T$, $\mathbf{(\cdot)}^H$, and $\mathbf{(\cdot)}^{-1}$ denote the transpose, conjugate transpose, and inverse  respectively. The $k$th frontal slice of an $N$th-order quaternion tensor $\dot{\mathcal{A}}\in\mathbb{H}^{I_1\times I_2\times \cdots \times I_N}$ is denoted by $\dot{\mathcal{A}}(:,:,i_3, \cdots, i_N)$, and the mode-\textit{k} unfolding is denoted by $\dot{\mathcal{A}}_{(k)}$. The product of the \textit{k}-mode is represented by the symbol $\times_k$. The symbols $\|\cdot\|_F$ and $\|\cdot\|_*$ represent the Frobenius norm and nuclear norm of a given quantity, respectively.  The inner product of  matrices $\cdot_1$ and $\cdot_2$ is defined as $\langle\cdot_1, \cdot_2\rangle\triangleq \text{tr}(\cdot_1^H\cdot_2)$, and $tr(\cdot)$ is the trace function.

\subsection{Preliminary of quaternion and quaternion matrix}
A quaternion number $\dot{a}\in\mathbb{H}$ can be depicted by $
\dot{a}=a_0+a_1\emph{i}+a_2\emph{j}+a_3\emph{k}$, where $a_n\in\mathbb{R}$ $(n=0,1,2,3)$, and \emph{i, j, k} are three imaginary units which satisfy:
\begin{equation*}\label{m4}
	\begin{cases}
		\emph{i}^2= \emph{j}^2 =\emph{k}^2= \emph{i}\emph{j}\emph{k}=-1\\
		\emph{i}\emph{j}=-\emph{j}\emph{i} = \emph{k},    \emph{j}\emph{k}=-\emph{k}\emph{j} = \emph{i},  \emph{k}\emph{i}=-\emph{i}\emph{k} = \emph{j}.
	\end{cases}
\end{equation*}
$\mathfrak{R}(\dot{a}) \triangleq a_0$ and $\mathfrak{I}(\dot{a}) \triangleq a_1\emph{i}+a_2\emph{j}+a_3\emph{k}$ denote the real part and imaginary part of  $\dot{a}$, respectively, so $\dot{a}=\mathfrak{R}(\dot{a})+\mathfrak{I}(\dot{a})$. When $a_0 = 0$, $\dot{a}$ is  a pure quaternion.  The conjugate of $\dot{a}$  is defined as
\begin{equation*}
\dot{a}^{*} = a_0-a_1\emph{i}-a_2\emph{j}-a_3\emph{k}, 
\end{equation*}
and the modulus of $\dot{a}$ is 
\begin{equation*}
|\dot{a}|= \sqrt{\dot{a}\dot{a}^{*}}=\sqrt{a_0^2+a_1^2+a_2^2+a_3^2}.
\end{equation*}

It is essential to acknowledge that multiplication in the quaternion domain does not adhere to the commutative property, \textit{i.e.} $\dot{a}\dot{b} \neq \dot{b}\dot{a}$.

A quaternion matrix $\dot{\textbf{A}}=(\dot{a}_{ij})\in\mathbb{H}^{M \times N}$, where $\dot{\textbf{A}}=\textbf{A}_0+\textbf{A}_1\emph{i}+\textbf{A}_2\emph{j}+\textbf{A}_3\emph{k}$, and $\textbf{A}_n\in\mathbb{R}^{M \times N}$ $(n=0,1,2,3)$. The conjugate transpose of  $\dot{\textbf{A}}$ is
\begin{equation*}
\dot{\textbf{A}}^H=\textbf{A}_0^T-\textbf{A}_1^T\emph{i}-\textbf{A}_2^T\emph{j}-\textbf{A}_3^T\emph{k}.
\end{equation*} The Frobenius norm is defined as \begin{equation*}
\parallel\dot{\textbf{A}}\parallel_F = \sqrt{\sum_{i=1}^{M}\sum_{j=1}^{N}|\dot{a}_{ij}|^2}=\sqrt{tr(\dot{\textbf{A}}^H\dot{\textbf{A}})}. \end{equation*} The spectral norm is defined as \begin{equation*}\parallel\dot{\mathbf{A}}\parallel_2 = \mathop{\max}\limits_{\dot{\mathbf{x}}\in\mathbb{H^N}, \parallel\dot{\mathbf{x}}\parallel_2=1}\parallel \dot{\mathbf{A}}\dot{\mathbf{x}}\parallel_2, \end{equation*}
where the 2-norm is $\parallel\dot{\mathbf{A}}\dot{\mathbf{x}}\parallel_2 =(\sum_{i=1}^{N}\mid\dot{\mathbf{A}}\dot{\mathbf{x_i}}\mid^2)^{\frac{1}{2}}.$  

\begin{definition}(\textbf{The rank of quaternion matrix} \cite{zhang1997quaternions}): For a quaternion matrix $\dot{\mathbf{A}}=(\dot{a}_{ij})\in\mathbb{H}^{M \times N}$, the  maximum number of right (left) linearly independent columns (rows)  is defined as the rank of  $\dot{\mathbf{A}}$.
\end{definition}

\begin{theorem}(\textbf{QSVD} \cite{zhang1997quaternions}):
	\label{theorem1}
	Let a quaternion matrix $\dot{\mathbf{A}}\in\mathbb{H}^{M \times N}$ be of rank $r$. There exist two unitary quaternion matrices $\dot{\mathbf{U}}\in\mathbb{H}^{M \times M}$
	and $\dot{\mathbf{V}}\in\mathbb{H}^{N \times N}$ such that
	\begin{equation}
		\dot{\mathbf{A}}=\dot{\mathbf{U}}
		\left( \begin{array}{cc}
			\mathbf{\Sigma}_r & \mathbf{0}  \\
			\mathbf{0} & \mathbf{0}\\
		\end{array}
		\right )\dot{\mathbf{V}}^H= \dot{\mathbf{U}}\mathbf{\Lambda}\dot{\mathbf{V}}^H,
	\end{equation}
	where $\mathbf{\Sigma}_r=diag({\sigma_1,\cdots, \sigma_r})\in\mathbb{R}^{r\times r}$, and all singular values $\sigma_i>0,  i=1,\cdots,r$. 
\end{theorem}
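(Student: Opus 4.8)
The plan is to establish the QSVD by reducing it to a spectral decomposition of a Hermitian, positive semidefinite quaternion matrix, exactly as one proves the real and complex SVD. First I would form the Gram matrix $\dot{\mathbf{B}} = \dot{\mathbf{A}}^H \dot{\mathbf{A}} \in \mathbb{H}^{N\times N}$. This matrix is Hermitian, since $\dot{\mathbf{B}}^H = \dot{\mathbf{A}}^H (\dot{\mathbf{A}}^H)^H = \dot{\mathbf{A}}^H\dot{\mathbf{A}} = \dot{\mathbf{B}}$, and it is positive semidefinite because $\dot{\mathbf{x}}^H\dot{\mathbf{B}}\dot{\mathbf{x}} = \|\dot{\mathbf{A}}\dot{\mathbf{x}}\|_F^2 \ge 0$ for every $\dot{\mathbf{x}}\in\mathbb{H}^N$. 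The same identity shows $\dot{\mathbf{A}}$ and $\dot{\mathbf{B}}$ have the same null space, so $\mathrm{rank}(\dot{\mathbf{B}}) = \mathrm{rank}(\dot{\mathbf{A}}) = r$.

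Next I would invoke the spectral theorem for Hermitian quaternion matrices: there is a unitary $\dot{\mathbf{V}}\in\mathbb{H}^{N\times N}$ and a real diagonal matrix $\mathbf{D}=\diag(\lambda_1,\dots,\lambda_N)$ with $\dot{\mathbf{B}} = \dot{\mathbf{V}}\mathbf{D}\dot{\mathbf{V}}^H$. The eigenvalues $\lambda_i$ are real because $\dot{\mathbf{B}}$ is Hermitian and nonnegative because $\dot{\mathbf{B}}$ is positive semidefinite; ordering them as $\lambda_1 \ge \cdots \ge \lambda_r > 0 = \lambda_{r+1} = \cdots = \lambda_N$, the count of strictly positive eigenvalues is exactly $r$ by the rank identity above. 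I then set $\sigma_i = \sqrt{\lambda_i} > 0$ for $i = 1,\dots,r$ and $\mathbf{\Sigma}_r = \diag(\sigma_1,\dots,\sigma_r)$.

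It remains to build the left factor $\dot{\mathbf{U}}$ and verify the factorization. Writing $\dot{\mathbf{V}} = [\dot{\mathbf{v}}_1,\dots,\dot{\mathbf{v}}_N]$ columnwise, I would define $\dot{\mathbf{u}}_i = \sigma_i^{-1}\dot{\mathbf{A}}\dot{\mathbf{v}}_i$ for $i = 1,\dots,r$. These are orthonormal: since $\lambda_l$ is real it commutes with quaternions, so $\dot{\mathbf{u}}_i^H\dot{\mathbf{u}}_l = \sigma_i^{-1}\sigma_l^{-1}\dot{\mathbf{v}}_i^H\dot{\mathbf{B}}\dot{\mathbf{v}}_l = \sigma_i^{-1}\sigma_l^{-1}\lambda_l\,\dot{\mathbf{v}}_i^H\dot{\mathbf{v}}_l = \delta_{il}$, using $\dot{\mathbf{B}}\dot{\mathbf{v}}_l = \dot{\mathbf{v}}_l\lambda_l$ and orthonormality of the $\dot{\mathbf{v}}_l$. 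Extending $\{\dot{\mathbf{u}}_1,\dots,\dot{\mathbf{u}}_r\}$ to a full unitary basis $\dot{\mathbf{U}} = [\dot{\mathbf{u}}_1,\dots,\dot{\mathbf{u}}_M]$ by quaternion Gram–Schmidt, a direct computation of $\dot{\mathbf{U}}^H\dot{\mathbf{A}}\dot{\mathbf{V}}$ recovers the claimed block form: the $(i,l)$ entry is $\dot{\mathbf{u}}_i^H\dot{\mathbf{A}}\dot{\mathbf{v}}_l$, which equals $\sigma_l\delta_{il}$ when $l\le r$ and vanishes when $l > r$ because $\dot{\mathbf{A}}\dot{\mathbf{v}}_l$ has norm $\sqrt{\lambda_l} = 0$. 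This gives $\dot{\mathbf{U}}^H\dot{\mathbf{A}}\dot{\mathbf{V}} = \mathbf{\Lambda}$, hence $\dot{\mathbf{A}} = \dot{\mathbf{U}}\mathbf{\Lambda}\dot{\mathbf{V}}^H$.

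The main obstacle is the quaternion spectral theorem invoked in the second step, since the non-commutativity of $\mathbb{H}$ makes eigenvalue theory delicate — right eigenvalues occur in similarity classes rather than as isolated scalars. The cleanest way to secure it is through the complex adjoint representation $\chi:\mathbb{H}^{N\times N}\to\mathbb{C}^{2N\times 2N}$ sending $\dot{\mathbf{B}} = \mathbf{C}_1 + \mathbf{C}_2 j$ to $\left(\begin{smallmatrix}\mathbf{C}_1 & \mathbf{C}_2\\ -\overline{\mathbf{C}_2} & \overline{\mathbf{C}_1}\end{smallmatrix}\right)$; this map preserves sums, products, and conjugate transposes, so $\chi(\dot{\mathbf{B}})$ is a Hermitian complex matrix with real eigenvalues that, by the symplectic structure of the image, occur in equal pairs. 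Applying the ordinary complex spectral theorem to $\chi(\dot{\mathbf{B}})$ and showing its eigenvector basis can be chosen compatibly with $\chi$ then lets one descend to a genuine quaternion unitary $\dot{\mathbf{V}}$. I would also take care to verify that quaternion Gram–Schmidt really produces a unitary completion, since the right-linear independence and the non-commutative normalization must be carried out consistently on one side; this detail underlies the extension of $\dot{\mathbf{U}}$.
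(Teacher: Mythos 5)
The paper does not prove this theorem at all: it is quoted as a known preliminary and attributed to Zhang's 1997 work on quaternion matrices, so there is no in-paper proof to compare against. Your argument --- forming the Hermitian positive semidefinite Gram matrix $\dot{\mathbf{A}}^H\dot{\mathbf{A}}$, diagonalizing it by the quaternion spectral theorem (secured via the complex adjoint representation), and building the left unitary factor from $\dot{\mathbf{u}}_i=\sigma_i^{-1}\dot{\mathbf{A}}\dot{\mathbf{v}}_i$ --- is correct, handles the non-commutativity in the right places (real eigenvalues commute, right eigenvector convention), and is essentially the classical proof found in the cited reference.
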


\subsection{Preliminary of quaternion tensor}
\begin{definition}[\textbf{Quaternion tensor}\cite{DBLP:journals/pr/MiaoKL20}]
	\label{def3}
	A multi-dimensional array or an Nth-order tensor is referred to as a quaternion tensor when its elements are quaternions. Specifically, quaternion tensor $\dot{\mathcal{A}}=(\dot{a}_{i_1i_2\ldots i_N})\in\mathbb{H}^{I_1\times I_2\times \cdots \times I_N}$ is formulated as $\dot{\mathcal{A}}=\mathcal{A}_0+\mathcal{A}_1i+\mathcal{A}_2j+\mathcal{A}_3k$, where $\mathcal{A}_n\in\mathbb{R}^{I_1\times I_2\times \cdots \times I_N}$ $(n=0,1,2,3)$ are real tensors. 
\end{definition}

\begin{definition}[\textbf{$\star_{QT}$-product} \cite{DBLP:journals/sigpro/MiaoK23}]
	\label{def4}
	Given two Nth-order ($N\geq3$) quaternion tensors $\dot{\mathcal{A}}\in\mathbb{H}^{I_1\times l\times I_3\times \ldots \times I_N}$, $\dot{\mathcal{B}}\in\mathbb{H}^{l\times I_2\times I_3\times \ldots \times I_N}$ and $N-2$ invertible quaternion matrices $\dot{\mathbf{Q}}_3\in\mathbb{H}^{I_3\times I_3}, \cdots,  \dot{\mathbf{Q}}_N\in\mathbb{H}^{I_N\times I_N}$, the $\star_{QT}$-product is defined as 
	\begin{equation}
		\dot{\mathcal{T}}=\dot{\mathcal{A}}\star_{QT}\dot{\mathcal{B}}=(\hat{\dot{\mathcal{A}}}\star_{QF}\hat{\dot{\mathcal{B}}})\times_3\dot{\mathbf{Q}}_3^{-1}\times_4 \cdots \times_N\dot{\mathbf{Q}}_N ^{-1},
	\end{equation}	
	where $\hat{\dot{\mathcal{A}}}=\dot{\mathcal{A}}\times_3\dot{\mathbf{Q}}_3\times_4 \cdots \times_N\dot{\mathbf{Q}}_N $ and $\hat{\dot{\mathcal{B}}}=\dot{\mathcal{B}}\times_3\dot{\mathbf{Q}}_3\times_4 \cdots \times_N\dot{\mathbf{Q}}_N $. The $\star_{QF}$-product is the quaternion facewise produce, \textit{i.e.}, $\dot{\mathcal{F}}=\dot{\mathcal{A}}\star_{QF}\dot{\mathcal{B}}$ such that the frontal slice of $\dot{\mathcal{F}}$ satisfies $\dot{\mathcal{F}}(:, :, i_3, \cdots, i_N)=\dot{\mathcal{A}}(:, :, i_3, \cdots, i_N)\dot{\mathcal{B}}(:, :, i_3, \cdots, i_N)$.
\end{definition}
\begin{remark}
	Based on $\star_{QT}$-product, the conjugate transpose of $\dot{\mathcal{A}}$ is denoted by $\dot{\mathcal{A}}^H$ and satisfies $\hat{\dot{\mathcal{A}}}^H(:, :, i_3, \cdots, i_N)=(\hat{\dot{\mathcal{A}}}(:, :, i_3, \cdots, i_N))^H$. The identity quaternion tensor $\dot{\mathcal{I}}\in\mathbb{H}^{l\times l\times \cdots \times I_N}$ has the property that $\hat{\dot{\mathcal{I}}}(:, :, i_3, \cdots, i_N)=\dot{\mathbf{I}}\in\mathbb{H}^{l\times l}$. If a quaternion tensor $\dot{\mathcal{U}}\in\mathbb{H}^{l\times l\times \cdots \times I_N}$ satisfies $\dot{\mathcal{U}}\star_{QT}\dot{\mathcal{U}}^H=\dot{\mathcal{I}}=\dot{\mathcal{U}}^H\star_{QT}\dot{\mathcal{U}}$,  $\dot{\mathcal{U}}$ is a unitary quaternion tensor.
\end{remark}
\begin{theorem}[\textbf{TQt-SVD }\cite{DBLP:journals/sigpro/MiaoK23}]
	\label{th2}
	Let $\dot{\mathcal{T}}\in\mathbb{H}^{I_1\times I_2\times \cdots \times I_N} (N\geq3)$. There exist two unitary quaternion tensors $\dot{\mathcal{U}}\in\mathbb{H}^{I_1\times I_1\times \cdots \times I_N}$
	and $\dot{\mathcal{V}}\in\mathbb{H}^{I_2\times I_2\times \cdots \times I_N}$. Such that
	\begin{equation*}
		\dot{\mathcal{T}}= \dot{\mathcal{U}}\star_{QT}\dot{\mathcal{D}}\star_{QT}\dot{\mathcal{V}}^H,
	\end{equation*}
	where the tensor $\dot{\mathcal{D}}\in\mathbb{H}^{I_1\times I_2\times \cdots \times I_N}$ is an f-diagonal quaternion tensor, which means that only the diagonal components of its frontal slices have non-zero values. 
\end{theorem}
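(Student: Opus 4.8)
The plan is to reduce the tensor statement to the matrix QSVD of Theorem~\ref{theorem1} by passing to the transform (``hat'') domain in which the $\star_{QT}$-product of Definition~\ref{def4} becomes slicewise matrix multiplication. First I would set $\hat{\dot{\mathcal{T}}}=\dot{\mathcal{T}}\times_3\dot{\mathbf{Q}}_3\times_4\cdots\times_N\dot{\mathbf{Q}}_N$ and note that every frontal slice $\hat{\dot{\mathcal{T}}}(:,:,i_3,\cdots,i_N)$ is an ordinary quaternion matrix in $\mathbb{H}^{I_1\times I_2}$. Applying Theorem~\ref{theorem1} to each such slice yields unitary factors $\hat{\dot{\mathbf{U}}}_{i_3\cdots i_N}\in\mathbb{H}^{I_1\times I_1}$, $\hat{\dot{\mathbf{V}}}_{i_3\cdots i_N}\in\mathbb{H}^{I_2\times I_2}$ and a diagonal middle factor $\hat{\dot{\mathbf{D}}}_{i_3\cdots i_N}\in\mathbb{H}^{I_1\times I_2}$ with $\hat{\dot{\mathcal{T}}}(:,:,i_3,\cdots,i_N)=\hat{\dot{\mathbf{U}}}_{i_3\cdots i_N}\hat{\dot{\mathbf{D}}}_{i_3\cdots i_N}\hat{\dot{\mathbf{V}}}_{i_3\cdots i_N}^{H}$.

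Second, I would assemble these slicewise factors into hat-domain tensors $\hat{\dot{\mathcal{U}}}$, $\hat{\dot{\mathcal{D}}}$, $\hat{\dot{\mathcal{V}}}$ by declaring their frontal slices to be the matrices just produced, and then define the sought tensors in the original domain by the inverse transform, $\dot{\mathcal{U}}=\hat{\dot{\mathcal{U}}}\times_3\dot{\mathbf{Q}}_3^{-1}\times_4\cdots\times_N\dot{\mathbf{Q}}_N^{-1}$, and analogously for $\dot{\mathcal{D}}$ and $\dot{\mathcal{V}}$. By the definition of the $\star_{QT}$-product, the triple product $\dot{\mathcal{U}}\star_{QT}\dot{\mathcal{D}}\star_{QT}\dot{\mathcal{V}}^H$ is computed by transforming to the hat domain, multiplying frontal slices facewise via $\star_{QF}$, and transforming back; since the facewise product of the three hat-domain slices reproduces $\hat{\dot{\mathcal{T}}}(:,:,i_3,\cdots,i_N)$ for every index, the back-transform returns exactly $\dot{\mathcal{T}}$.

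Third, I would verify the two structural claims. For unitarity, the Remark's conventions give $\hat{\dot{\mathcal{U}}}^H(:,:,i_3,\cdots,i_N)=(\hat{\dot{\mathcal{U}}}(:,:,i_3,\cdots,i_N))^H$ and $\hat{\dot{\mathcal{I}}}(:,:,i_3,\cdots,i_N)=\dot{\mathbf{I}}$, so the relation $\dot{\mathcal{U}}\star_{QT}\dot{\mathcal{U}}^H=\dot{\mathcal{I}}$ is equivalent to the slicewise identities $\hat{\dot{\mathbf{U}}}_{i_3\cdots i_N}\hat{\dot{\mathbf{U}}}_{i_3\cdots i_N}^{H}=\dot{\mathbf{I}}$, which hold because each QSVD factor is unitary; the same argument applies to $\dot{\mathcal{V}}$. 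For f-diagonality, each $\hat{\dot{\mathbf{D}}}_{i_3\cdots i_N}$ is diagonal, so $\hat{\dot{\mathcal{D}}}$ has diagonal frontal slices; since the inverse mode products act only along modes $3,\dots,N$, they form linear combinations of these slices, and a linear combination of diagonal matrices remains diagonal, so $\dot{\mathcal{D}}$ itself is f-diagonal.

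The construction is essentially a slicewise application of the matrix QSVD, so the only real subtlety — and the step I would treat most carefully — is the bookkeeping that ties together the three conventions of the $\star_{QT}$ framework (the product, the conjugate transpose, and the identity tensor). In particular I must confirm that unitarity of $\dot{\mathcal{U}}$ and $\dot{\mathcal{V}}$ persists even though the transform matrices $\dot{\mathbf{Q}}_3,\dots,\dot{\mathbf{Q}}_N$ are only assumed invertible rather than unitary; this is precisely why the $\star_{QT}$-notion of unitarity is defined through the hat domain, which decouples it from the choice of transform. No heavy computation is needed beyond verifying these definitional compatibilities.
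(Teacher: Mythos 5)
The paper does not prove this theorem itself --- it is imported verbatim from Miao and Kou \cite{DBLP:journals/sigpro/MiaoK23} --- but your argument (pass to the hat domain, apply the matrix QSVD of Theorem 2.2 to each frontal slice, reassemble, and invert the transform) is exactly the standard construction by which the cited source establishes TQt-SVD, and it is correct. In particular you handle the two genuine subtleties properly: unitarity and the conjugate transpose are defined through the hat domain, so they are insensitive to the transform matrices being merely invertible, and the mode-$k$ products for $k\ge 3$ act entrywise across frontal slices, so f-diagonality survives the inverse transform.
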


\begin{definition}[\textbf{TQt-rank} \cite{DBLP:journals/sigpro/MiaoK23}] Let  $\dot{\mathcal{T}}\in\mathbb{H}^{I_1\times I_2\times \cdots \times I_N} (N\geq3)$, and the corresponding TQt-SVD is $\dot{\mathcal{T}}= \dot{\mathcal{U}}\star_{QT}\dot{\mathcal{D}}\star_{QT}\dot{\mathcal{V}}^H$. The TQt-rank of $\dot{\mathcal{T}}$ is defined as  the number of nonzero tubes in $\dot{\mathcal{D}}$, \textit{i.e.}, $rank_{TQt}(\dot{\mathcal{T}})= \#\{k\mid\parallel\dot{\mathcal{D}}(k, k,:, \cdots, :)\parallel_F>0\}, k\in[K], K=\min\{I_1,I_2\}$. Moreover, the \textit{k}th singular value of $\dot{\mathcal{T}}$ is defined as $\sigma_{k}(\dot{\mathcal{T}})=\parallel\dot{\mathcal{D}}(k, k,:, \cdots, :)\parallel_F$.
\end{definition}

Additional details on quaternions, quaternion matrices, and quaternion tensors can be found in the following references: \cite{DBLP:journals/pr/MiaoKL20, DBLP:journals/sigpro/MiaoK23, zhang1997quaternions}.

\section{Quaternion UTV decompositions}\label{sec:2}
Firstly, this section introduces the quaternion matrix UTV decomposition and presents the QURV algorithm and the QULV algorithm. Furthermore, a novel decomposition called the quaternion tensor UTV decomposition is introduced, along with the provided QTURV decomposition algorithm.
\subsection{Quaternion matrix UTV decomposition}
The QUTV decomposition of a quaternion matrix can be classified into two classes in analogy to the real cases.
For a quaternion matrix $\dot{\mathbf{A}}\in\mathbb{H}^{M \times N}$, the QUTV decomposition is $\dot{\mathbf{A}}=\dot{\mathbf{U}}\dot{\mathbf{T}}\dot{\mathbf{V}}^H$, where $\dot{\mathbf{U}}\in\mathbb{H}^{M \times M}$, $\dot{\mathbf{V}}\in\mathbb{H}^{N \times N}$ are two unitary quaternion matrices, and $\dot{\mathbf{T}}\in\mathbb{H}^{M \times N}$ is a triangular quaternion matrix. The QURV decomposition is formulated as follows when $\dot{\mathbf{T}}$ is upper triangular.

\begin{equation}
	\dot{\mathbf{A}}=\dot{\mathbf{U}}
	\left( \begin{array}{cc}
		\dot{\mathbf{T}}_{11} & \dot{\mathbf{T}}_{12} \\
		\mathbf{0}_{(M-K)\times (K)} &\dot{\mathbf{T}}_{22}\\
	\end{array}
	\right )\dot{\mathbf{V}}^H,
\end{equation}
where $\dot{\mathbf{T}}_{11}\in \mathbb{H}^{K \times K}$, $\dot{\mathbf{T}}_{12}\in \mathbb{H}^{K \times (N-K)}$, and $\dot{\mathbf{T}}_{22}\in \mathbb{H}^{(M-K) \times (N-K)}$.
The QULV decomposition is formulated as follows when $\dot{\mathbf{T}}$ is lower triangular.

\begin{equation}
	\dot{\mathbf{A}}=\dot{\mathbf{U}}
	\left( \begin{array}{cc}
		\dot{\mathbf{T}}_{11} &\mathbf{0}_{K\times (N-K)} \\
		\dot{\mathbf{T}}_{21} 	 &\dot{\mathbf{T}}_{22}\\
	\end{array}
	\right )\dot{\mathbf{V}}^H,
\end{equation}
where $\dot{\mathbf{T}}_{11}\in \mathbb{H}^{K \times K}$, $\dot{\mathbf{T}}_{21}\in \mathbb{H}^{(M-K) \times K}$, and $\dot{\mathbf{T}}_{22}\in \mathbb{H}^{(M-K) \times (N-K)}$.

Basing on the above situations, when the middle quaternion matrix is diagonal and the elements are real nonnegative, the decomposition is referred as QSVD. 
\begin{equation}
	\dot{\mathbf{A}}=\dot{\mathbf{U}}
	\left( \begin{array}{cc}
		\dot{\mathbf{T}}_{11} &\mathbf{0}_{K\times (N-K)} \\
		\mathbf{0}_{ (M-K)\times K} 	 &\dot{\mathbf{T}}_{22}\\
	\end{array}
	\right)\dot{\mathbf{V}}^H.
\end{equation}
In this case, the diagonal elements in $\dot{\mathbf{T}}$ are singular values of  $\dot{\mathbf{A}}$, and $\dot{\mathbf{A}}_K=\dot{\mathbf{U}}(:,1:K)\dot{\mathbf{T}}_{11}\dot{\mathbf{V}}(:,1:K)^H$ provides the best rank-K approximation for $\dot{\mathbf{A}}$ \cite{eckart1936approximation}.

A beneficial method for computing URV decomposition can be achieved by utilizing QRCP, as indicated in Section \ref{sec:1}. Further, Algorithm \ref{a1.0} provides a clear overview of the QURV decomposition process through the use of the QQRCP. Similarly, Algorithm \ref{a1.1} provides a description of the QULV decomposition.
\begin{algorithm}[htbp]
	\caption{The QURV decomposition}
	\label{a1.0}
	\begin{algorithmic}[1]
		\REQUIRE   Quaternion matrix data $\dot{\mathbf{A}}\in\mathbb{H}^{M\times N}$.
		\ENSURE    Two unitary quaternion matrices: $\dot{\mathbf{U}}\in\mathbb{H}^{M \times M}$, $\dot{\mathbf{V}}\in\mathbb{H}^{N \times N}$  and an upper triangular quaternion matrix $\dot{\mathbf{R}}\in\mathbb{H}^{M \times N}$ such that $\dot{\mathbf{A}}=\dot{\mathbf{U}}\dot{\mathbf{R}}\dot{\mathbf{V}}^H$.
		\STATE Compute the QQRCP of $\dot{\mathbf{A}}^H$ as $\dot{\mathbf{A}}^H=\dot{\mathbf{Q}}_1\dot{\mathbf{R}}_1\mathbf{P}_1^T$.
		\STATE Compute the QQRCP of $(\dot{\mathbf{R}}_1\mathbf{P}_1^T)^H$ as $(\dot{\mathbf{R}}_1\mathbf{P}_1^T)^H=\dot{\mathbf{Q}}_2\dot{\mathbf{R}}_2\mathbf{P}_2^T$.
		\STATE  $\dot{\mathbf{U}}=\dot{\mathbf{Q}}_2$, $\dot{\mathbf{R}}=\dot{\mathbf{R}}_2$, and $\dot{\mathbf{V}}=\dot{\mathbf{Q}}_1\mathbf{P}_2$.		
	\end{algorithmic}
\end{algorithm}

\begin{algorithm}[htbp]
	\caption{The QULV decomposition}
	\label{a1.1}
	\begin{algorithmic}[1]
		\REQUIRE   Quaternion matrix data $\dot{\mathbf{A}}\in\mathbb{H}^{M\times N}$.
		\ENSURE    Two unitary quaternion matrices: $\dot{\mathbf{U}}\in\mathbb{H}^{M \times M}$, $\dot{\mathbf{V}}\in\mathbb{H}^{N \times N}$  and a lower triangular quaternion matrix $\dot{\mathbf{L}}\in\mathbb{H}^{M \times N}$ such that $\dot{\mathbf{A}}=\dot{\mathbf{U}}\dot{\mathbf{L}}\dot{\mathbf{V}}^H$.
		\STATE Compute the QQRCP of $\dot{\mathbf{A}}$ as $\dot{\mathbf{A}}=\dot{\mathbf{Q}}_1\dot{\mathbf{R}}_1\mathbf{P}_1^T$.
		\STATE Compute the QQRCP of $(\dot{\mathbf{R}}_1\mathbf{P}_1^T)^H$ as $(\dot{\mathbf{R}}_1\mathbf{P}_1^T)^H=\dot{\mathbf{Q}}_2\dot{\mathbf{R}}_2\mathbf{P}_2^T$.
		\STATE  $\dot{\mathbf{U}}=\dot{\mathbf{Q}}_1\mathbf{P}_2$, $\dot{\mathbf{L}}=\dot{\mathbf{R}}_2^H$, and $\dot{\mathbf{V}}=\dot{\mathbf{Q}}_2$.	
	\end{algorithmic}
\end{algorithm}

\subsection{Quaternion tensor UTV decomposition}
The following QTUTV decomposition is proposed for the $N$th-order quaternion tensor, which is based on the $\star_{QT}$-product.
\begin{theorem}[\textbf{QTUTV }]
	\label{th3}
	Let $\dot{\mathcal{T}}\in\mathbb{H}^{I_1\times I_2\times \cdots \times I_N} (N\geq3)$.  $\dot{\mathcal{T}}$ can be decomposed to
	\begin{equation*}
		\dot{\mathcal{A}}= \dot{\mathcal{U}}\star_{QT}\dot{\mathcal{T}}\star_{QT}\dot{\mathcal{V}}^H,
	\end{equation*}
	where $\dot{\mathcal{U}}\in\mathbb{H}^{I_1\times I_1\times \cdots \times I_N}$,
	and $\dot{\mathcal{V}}\in\mathbb{H}^{I_2\times I_2\times \cdots \times I_N}$ are unitary quaternion tensors, and $\dot{\mathcal{T}}\in\mathbb{H}^{I_1\times I_2\times \cdots \times I_N}$ is a frontal triangular quaternion tensor, indicating that the frontal slices of  $\dot{\mathcal{T}}$ are triangular.  
\end{theorem}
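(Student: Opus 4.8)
The plan is to lift the matrix QURV decomposition of Algorithm~\ref{a1.0} to the tensor setting, slice by slice, in the transformed domain, exploiting the fact that by Definition~\ref{def4} the $\star_{QT}$-product collapses to the facewise product $\star_{QF}$ once the invertible transforms $\dot{\mathbf{Q}}_3,\cdots,\dot{\mathbf{Q}}_N$ have been applied. First I would pass to the transformed domain by forming $\hat{\dot{\mathcal{A}}}=\dot{\mathcal{A}}\times_3\dot{\mathbf{Q}}_3\times_4\cdots\times_N\dot{\mathbf{Q}}_N$. For each fixed frontal index $(i_3,\cdots,i_N)$, the slice $\hat{\dot{\mathcal{A}}}(:,:,i_3,\cdots,i_N)$ is an ordinary quaternion matrix in $\mathbb{H}^{I_1\times I_2}$; since a quaternion QR (hence QQRCP, hence QURV) decomposition always exists, Algorithm~\ref{a1.0} applies and yields $\hat{\dot{\mathcal{A}}}(:,:,i_3,\cdots,i_N)=\dot{\mathbf{U}}_{(i_3,\cdots,i_N)}\dot{\mathbf{T}}_{(i_3,\cdots,i_N)}\dot{\mathbf{V}}_{(i_3,\cdots,i_N)}^H$ with two unitary factors and an upper-triangular middle factor. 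Collecting these matrices as the frontal slices of tensors $\hat{\dot{\mathcal{U}}}$, $\hat{\dot{\mathcal{T}}}$, $\hat{\dot{\mathcal{V}}}$ gives $\hat{\dot{\mathcal{A}}}=\hat{\dot{\mathcal{U}}}\star_{QF}\hat{\dot{\mathcal{T}}}\star_{QF}\hat{\dot{\mathcal{V}}}^H$ directly from the definition of $\star_{QF}$.

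Applying the inverse transforms $\times_3\dot{\mathbf{Q}}_3^{-1}\times_4\cdots\times_N\dot{\mathbf{Q}}_N^{-1}$ to each factor and invoking Definition~\ref{def4} then produces $\dot{\mathcal{A}}=\dot{\mathcal{U}}\star_{QT}\dot{\mathcal{T}}\star_{QT}\dot{\mathcal{V}}^H$, where $\dot{\mathcal{U}}$, $\dot{\mathcal{T}}$, $\dot{\mathcal{V}}$ denote the inverse-transformed tensors. Unitarity of $\dot{\mathcal{U}}$ and $\dot{\mathcal{V}}$ is then immediate from the characterization in the Remark following Definition~\ref{def4}: a tensor is unitary under $\star_{QT}$ exactly when each of its transformed frontal slices is a unitary quaternion matrix, which holds by construction of $\hat{\dot{\mathcal{U}}}$ and $\hat{\dot{\mathcal{V}}}$.

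The step I expect to be the main obstacle is showing that triangularity survives the inverse transform, i.e.\ that $\dot{\mathcal{T}}$ itself --- not merely $\hat{\dot{\mathcal{T}}}$ --- is frontal-triangular. The key observation is that the mode-$k$ products for $k\geq3$ act only along the tube fibers and never mix the row-column position $(a,b)$ within a frontal slice. Since each transformed slice $\hat{\dot{\mathcal{T}}}(:,:,i_3,\cdots,i_N)$ is upper triangular, every fiber $\hat{\dot{\mathcal{T}}}(a,b,:,\cdots,:)$ with $a>b$ vanishes identically; as the inverse transforms act linearly on each such fiber independently of $(a,b)$, the fiber $\dot{\mathcal{T}}(a,b,:,\cdots,:)$ vanishes too, so every frontal slice of $\dot{\mathcal{T}}$ is upper triangular. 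Care is needed here only because quaternion multiplication is noncommutative, so I would phrase the mode-$k$ actions consistently as one-sided products and check that the zero-fiber property is preserved --- which it is, since a zero fiber maps to a zero fiber under any linear map. The QULV variant (Algorithm~\ref{a1.1}) follows verbatim with lower-triangular slices.
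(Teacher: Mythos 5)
Your construction is essentially the paper's own (implicit) proof: the theorem is justified constructively by Algorithm~\ref{a1.2}, which passes to the transform domain, applies QQRCP twice to each frontal slice to get a slice-wise QURV, and maps the collected factors back with the inverse mode-$k$ products. Your extra care in checking that the upper-triangular zero pattern survives the inverse transforms (because the mode-$k$ products for $k\geq 3$ act tube-wise and send zero fibers to zero fibers) is correct and supplies a detail the paper leaves unstated.
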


\begin{remark}
	When the frontal triangular quaternion tensor $\dot{\mathcal{T}}$ is f-upper triangular (meaning that the frontal slices are of upper triangular quaternion matrices), the QTUTV decomposition is equivalent to QTURV decomposition. Similarly, when the triangular quaternion tensor $\dot{\mathcal{T}}$ is f-lower triangular (meaning that frontal slices are of lower triangular quaternion matrices), the QTUTV decomposition is equivalent to QTULV decomposition. Moreover, if the triangular quaternion tensor $\dot{\mathcal{T}}$ satisfies the condition of being f-diagonal triangular (i.e., the frontal slices consist of diagonal quaternion matrices), then the QTUTV decomposition can be referred to as the QTSVD decomposition. 
\end{remark}

Algorithm \ref{a1.2} summarizes the QTURV decomposition using QQRCP decomposition for each frontal slice of the quaternion tensor in the transform domain. The algorithmic procedure of QTULV is analogous and will not be reiterated here.
\begin{algorithm}[htbp]
	\caption{The QTURV decomposition}
	\label{a1.2}
	\begin{algorithmic}[1]
		\REQUIRE   Quaternion tensor data $\dot{\mathcal{A}}\in\mathbb{H}^{{I_1\times I_2\times \cdots \times I_N}}(N\geq3)$, and invertible quaternion matrices $\dot{\mathbf{Q}}_3\in\mathbb{H}^{I_3\times I_3}, \cdots,  \dot{\mathbf{Q}}_N\in\mathbb{H}^{I_N\times I_N}$.
		\ENSURE    Two unitary quaternion tensors:
		$\dot{\mathcal{U}}\in\mathbb{H}^{I_1\times I_1\times \cdots \times I_N}$,
		$\dot{\mathcal{V}}\in\mathbb{H}^{I_2\times I_2\times \cdots \times I_N}$, and a f-upper quaternion tensor $\dot{\mathcal{R}}\in\mathbb{H}^{I_1\times I_2\times \cdots \times I_N}$ such that $\dot{\mathcal{A}}= \dot{\mathcal{U}}\star_{QT}\dot{\mathcal{T}}\star_{QT}\dot{\mathcal{V}}^H$.
		\STATE Convert $\dot{\mathcal{A}}$ to transform domain: $\hat{\dot{\mathcal{A}}}=\dot{\mathcal{A}}\times_3\dot{\mathbf{Q}}_3\times_4 \cdots \times_N\dot{\mathbf{Q}}_N $.
		\STATE Concatenate frontal slices along the third dimension:\\ $\hat{\dot{\mathcal{A}}}=\text{reshape}(\hat{\dot{\mathcal{A}}},[I_1,  I_2, I_3I_4\cdots I_N])$.
		\STATE \textbf{for} k=1:$I_3I_4\cdots I_N$ \textbf{do}
		\STATE \quad  Compute the QQRCP of $\hat{\dot{\mathcal{A}}}(:, :, k)^H$:\\ \quad $\hat{\dot{\mathcal{A}}}(:, :, k)^H=\hat{\dot{\mathcal{Q}}}_1(:, :, k)\hat{\dot{\mathcal{R}}}_1(:, :, k)\mathcal{P}_1(:, :, k)^T$.
		\STATE \quad Compute the QQRCP of $(\hat{\dot{\mathcal{R}}}_1(:, :, k)\mathcal{P}_1(:, :, k)^T)^H$:\\ \quad $(\hat{\dot{\mathcal{R}}}_1(:, :, k)\mathcal{P}_1(:, :, k)^T)^H=\hat{\dot{\mathcal{Q}}}_2(:, :, k)\hat{\dot{\mathcal{R}}}_2(:, :, k)\mathcal{P}_2(:, :, k)^T$.
		\STATE \textbf{end for}
		\STATE Reshape into $N$th-order quaternion tensors:\\ 
		$\hat{\dot{\mathcal{U}}}=\text{reshape}(\hat{\dot{\mathcal{Q}}}_2, [I_1,  I_1, I_3,\cdots, I_N])$;\\ $\hat{\dot{\mathcal{R}}}=\text{reshape}(\hat{\dot{\mathcal{R}}}_2, [I_1,  I_2, I_3, \cdots, I_N])$;\\ $\hat{\dot{\mathcal{V}}}=\text{reshape}(\hat{\dot{\mathcal{Q}}}_1\star_{QF}\mathcal{P}_2, [I_2,  I_2, I_3,\cdots, I_N])$.		
		\STATE Return to the original domain:\\ $\dot{\mathcal{U}}=\hat{\dot{\mathcal{U}}}\times_3\dot{\mathbf{Q}}_3^{-1}\times_4\cdots \times_N\dot{\mathbf{Q}}_N ^{-1}$;\\ $\dot{\mathcal{R}}=\hat{\dot{\mathcal{R}}}\times_3\dot{\mathbf{Q}}_3^{-1}\times_4 \cdots \times_N\dot{\mathbf{Q}}_N ^{-1}$;\\ $\dot{\mathcal{V}}=\hat{\dot{\mathcal{V}}}\times_3\dot{\mathbf{Q}}_3^{-1}\times_4\cdots \times_N\dot{\mathbf{Q}}_N ^{-1}$.		
	\end{algorithmic}
\end{algorithm}
\section{Randomized quaternion UTV decompositions}\label{sec:3}
This section is dedicated to the development of randomized QURV and QTURV for the purpose of computing the decomposition in a more efficient manner. This technique is determined by the chosen rank and utilizes a randomized test quaternion matrix with QQR decomposition. The method used for a quaternion matrix is called Compressed Randomized QURV (CoR-QURV), and the approach used for a quaternion tensor is called Compressed Randomized QTURV (CoR-QTURV). 
\subsection{Randomized Algorithms for QURV}
Given a quaternion matrix $\dot{\mathbf{A}}\in\mathbb{H}^{M \times N}$, two integers $l$ and $p$, where $l$ is the target rank and $p$ is the power scheme parameter.  Forming an $N\times l$ quaternion random test matrix as $\dot{\mathbf{\Omega}}=\mathbf{\Omega}_0+\mathbf{\Omega}_1i+\mathbf{\Omega}_2j+\mathbf{\Omega}_3k$, where $\mathbf{\Omega}_i, i=0, 1, 2,3$ are random and independently drawn from the $N(0,1)-$normal distribution. Then, the whole process of CoR-QURV can be summarized in Algorithm \ref{a1.3}.
\begin{algorithm}[htbp]
	\caption{The CoR-QURV decomposition}
	\label{a1.3}
	\begin{algorithmic}[1]
		\REQUIRE   Quaternion matrix data $\dot{\mathbf{A}}\in\mathbb{H}^{M\times N}$,  integers $l$ and the power parameter $p$. 
		\ENSURE    Two unitary quaternion matrices: $\dot{\mathbf{U}}\in\mathbb{H}^{M \times l}$, $\dot{\mathbf{V}}\in\mathbb{H}^{l \times N}$  and an upper triangular quaternion matrix $\dot{\mathbf{R}}\in\mathbb{H}^{l \times l}$ such that $\dot{\mathbf{A}}\approx\dot{\mathbf{U}}\dot{\mathbf{R}}\dot{\mathbf{V}}^H$.
		\STATE Draw an $N\times l$ quaternion random test matrix $\dot{\mathbf{\Omega}}$.
		\STATE  Construct $\dot{\mathbf{Y}}_0=\dot{\mathbf{A}}\dot{\mathbf{\Omega}}$. \STATE \textbf{for} $i=1: p$ \textbf{ do}
		\STATE \qquad Compute $\hat{\dot{\mathbf{Y}}}_i=\dot{\mathbf{A}}^H\dot{\mathbf{Y}}_{i-1}$ and $\dot{\mathbf{Y}}_i=\dot{\mathbf{A}}\hat{\dot{\mathbf{Y}}}_{i}$.
		\STATE \textbf{end for} 
		\STATE Compute the QQR of $\hat{\dot{\mathbf{Y}}}_q$  and $\dot{\mathbf{Y}}_q$ as $\hat{\dot{\mathbf{Y}}}_q=\dot{\mathbf{Q}}_1\dot{\mathbf{R}}_1$ and $\dot{\mathbf{Y}}_q=\dot{\mathbf{Q}}_2\dot{\mathbf{R}}_2$.
		\STATE Compute the QQRCP of  $\dot{\mathbf{M}}=\dot{\mathbf{Q}}_1^H\dot{\mathbf{A}}\dot{\mathbf{Q}_2}$ as $\dot{\mathbf{M}}=\dot{\mathbf{Q}}_3\dot{\mathbf{R}}_3\mathbf{P}^T$.
		\STATE  Form the CoR-QURV decomposition of  $\dot{\mathbf{A}}$, $\dot{\mathbf{U}}=\dot{\mathbf{Q}}_1\dot{\mathbf{Q}}_3$, $\dot{\mathbf{R}}=\dot{\mathbf{R}}_3$, and $\dot{\mathbf{V}}=\dot{\mathbf{Q}}_2\mathbf{P}$.		
	\end{algorithmic}
\end{algorithm}
\begin{remark}
	For the 7th step in Algorithm \ref{a1.3}, when $p=1$, multiplying $\dot{\mathbf{Q}}_2^H\dot{\mathbf{\Omega}}$, then the equation can be rewritten as $\dot{\mathbf{M}}\dot{\mathbf{Q}_2}\dot{\mathbf{\Omega}}=\dot{\mathbf{Q}}_1^H\dot{\mathbf{A}}\dot{\mathbf{Q}_2}\dot{\mathbf{Q}}_2^H\dot{\mathbf{\Omega}}$. Assuming $\dot{\mathbf{A}}\approx\dot{\mathbf{A}}\dot{\mathbf{Q}_2}\dot{\mathbf{Q}}_2^H$, the quaternion matrix $\dot{\mathbf{Q}}_1^H\dot{\mathbf{A}}\dot{\mathbf{Q}}_2$ can be replaced by $\dot{\mathbf{Q}}_1^H\dot{\mathbf{Y}}_0(\dot{\mathbf{Q}}_2^H\dot{\mathbf{\Omega}})^{\dag}$ which requires $\mathcal{O}(Ml^2+l^3+Nl^2)$. However, the computational complexity of computing $(\dot{\mathbf{Q}}_1^H\dot{\mathbf{A}})\dot{\mathbf{Q}}_2$ and $\dot{\mathbf{Q}}_1^H(\dot{\mathbf{A}}\mathbf{Q}_2)$ is $\mathcal{O}(MNl+Nl^2)$ and $\mathcal{O}(MNl+Ml^2)$, respectively. When $l<\min({M, N})$, the cost of obtaining $\dot{\mathbf{Q}}_1^H\dot{\mathbf{Y}}_0\dot{\mathbf{\Omega}}^{\dag}\dot{\mathbf{Q}}_2$ is less than the cost of obtaining $\dot{\mathbf{Q}}_1^H\dot{\mathbf{A}}\dot{\mathbf{Q}}_2$.
\end{remark}

The computational complexity of each step in Algorithm \ref{a1.3} is assessed. In the first step, drawing a quaternion random test matrix $\Omega$ needs $\mathcal{O}(Ml)$. Then, the computational complexity for forming $\hat{\dot{\mathbf{Y}}}_i$ and $\dot{\mathbf{Y}}_i$ both  are $\mathcal{O}(Ml^2)$. The costs of QQR decomposition are $\mathcal{O}(Ml^2)$ and $\mathcal{O}(Nl^2)$, then computing the multiplication of $(\dot{\mathbf{Q}}_1^H\dot{\mathbf{A}})\dot{\mathbf{Q}}_2$ and QQRCP  need $\mathcal{O}(MNl+Nl^2)$ and $\mathcal{O}(l^3)$, respectively. At last, forming $\dot{\mathbf{U}}$ and $\dot{\mathbf{V}}$ require $\mathcal{O}(Ml^2)$ and $\mathcal{O}(Nl)$, respectively. Hence, the cost of CoR-QURV is $\mathcal{O}(Ml+Nl+MNl+3Ml^2+2Nl^2+l^3)$.
\subsection{Randomized Algorithms for QTURV}
The CoR-QTURV decomposition for the $N$th-order quaternion tensor is devised in a manner similar to the proposed QTURV decomposition.
\begin{algorithm}[htbp]
	\caption{The CoR-QTURV decomposition}
	\label{a1.4}
	\begin{algorithmic}[1]
		\REQUIRE   Quaternion tensor data $\dot{\mathcal{A}}\in\mathbb{H}^{{I_1\times I_2\times \cdots \times I_N}}(N\geq3)$, and invertible quaternion matrices $\dot{\mathbf{Q}}_3\in\mathbb{H}^{I_3\times I_3}, \cdots,  \dot{\mathbf{Q}}_N\in\mathbb{H}^{I_N\times I_N}$. Integers $l$ and the power parameter $p$.
		\ENSURE    Two unitary quaternion tensors:
		$\dot{\mathcal{U}}\in\mathbb{H}^{I_1\times l\times \cdots \times I_N}$,
		$\dot{\mathcal{V}}\in\mathbb{H}^{I_2\times l\times \cdots \times I_N}$, and a f-upper quaternion tensor $\dot{\mathcal{R}}\in\mathbb{H}^{l\times l\times \cdots \times I_N}$ such that $\dot{\mathcal{A}}\approx \dot{\mathcal{U}}\star_{QT}\dot{\mathcal{T}}\star_{QT}\dot{\mathcal{V}}^H$.
		\STATE Draw an $I_2\times l$ quaternion random test matrix $\dot{\mathbf{\Omega}}$.
		\STATE Convert $\dot{\mathcal{A}}$ to transform domain: $\hat{\dot{\mathcal{A}}}=\dot{\mathcal{A}}\times_3\dot{\mathbf{Q}}_3\times_4 \cdots \times_N\dot{\mathbf{Q}}_N $.
		\STATE Concatenate frontal slices along the third dimension:\\ $\hat{\dot{\mathcal{A}}}=\text{reshape}(\hat{\dot{\mathcal{A}}},[I_1,  I_2, I_3I_4\cdots I_N])$.
		\STATE \textbf{for} k=1:$I_3I_4\cdots I_N$ \textbf{do}
		\STATE \quad  Construct $\dot{\mathbf{Y}}_0^k=\hat{\dot{\mathcal{A}}}(:, :, k)\dot{\mathbf{\Omega}}$.
		\STATE \quad  \textbf{for} $i=1:p$ \textbf{ do}
		\STATE \qquad Compute $\hat{\dot{\mathbf{Y}}}_i^k=\hat{\dot{\mathcal{A}}}(:, :, k)^H\dot{\mathbf{Y}}_{i-1}^k$ and $\dot{\mathbf{Y}}_i^k=\hat{\dot{\mathcal{A}}}(:, :, k)\hat{\dot{\mathbf{Y}}}_{i}^k$.
		\STATE \quad  \textbf{end for} 
		\STATE	\quad Compute the QQR of $\hat{\dot{\mathbf{Y}}}_q$  and $\dot{\mathbf{Y}}_q$ as $\hat{\dot{\mathbf{Y}}}_q^k=\dot{\mathbf{Q}}_1^k\dot{\mathbf{R}}_1^k$ and $\dot{\mathbf{Y}}_q^k=\dot{\mathbf{Q}}_2^k\dot{\mathbf{R}}_2^k$.
		\STATE \quad Compute the QQRCP of $\dot{\mathbf{M}}^k=(\dot{\mathbf{Q}}_1^{k})^{H}\hat{\dot{\mathcal{A}}}(:, :, k)\mathbf{Q}_2^k$ as  $\dot{\mathbf{M}}^k=\dot{\mathbf{Q}}_3^k\dot{\mathbf{R}}_3^k(\mathbf{P}^{k})^{T}$.
		\STATE \quad Let $\widetilde{\dot{\mathcal{U}}}(:, :, k)=\dot{\mathbf{Q}}_1^k\dot{\mathbf{Q}}_3^k$, $\widetilde{\dot{\mathcal{R}}}(:, :, k)=\dot{\mathbf{R}}_3^k$, $\widetilde{\dot{\mathcal{V}}}(:, :, k)=\dot{\mathbf{Q}}_2^k\mathbf{P}^{k}$
		\STATE \textbf{end for}
		\STATE Reshape into $N$th-order quaternion tensors:\\ 
		$\hat{\dot{\mathcal{U}}}=\text{reshape}(\widetilde{\dot{\mathcal{U}}}(:, :, k), [I_1,  l, I_3,\cdots, I_N])$;\\ $\hat{\dot{\mathcal{R}}}=\text{reshape}(\widetilde{\dot{\mathcal{R}}}(:, :, k), [l,  l, I_3, \cdots, I_N])$;\\ $\hat{\dot{\mathcal{V}}}=\text{reshape}(\widetilde{\dot{\mathcal{V}}}(:, :, k), [I_2,  l, I_3,\cdots, I_N])$.		
		\STATE Return to the original domain:\\ $\dot{\mathcal{U}}=\hat{\dot{\mathcal{U}}}\times_3\dot{\mathbf{Q}}_3^{-1}\times_4\cdots \times_N\dot{\mathbf{Q}}_N ^{-1}$;\\ $\dot{\mathcal{R}}=\hat{\dot{\mathcal{R}}}\times_3\dot{\mathbf{Q}}_3^{-1}\times_4 \cdots \times_N\dot{\mathbf{Q}}_N ^{-1}$;\\ $\dot{\mathcal{V}}=\hat{\dot{\mathcal{V}}}\times_3\dot{\mathbf{Q}}_3^{-1}\times_4\cdots \times_N\dot{\mathbf{Q}}_N ^{-1}$.		
	\end{algorithmic}
\end{algorithm}

The computational complexity of every step in Algorithm \ref{a1.4} is evaluated. Consider a quaternion tensor $\dot{\mathcal{A}}\in\mathbb{H}^{{I_1\times I_2\times I_3}}$ as an example. The transformation employed in the subsequent experiment is QDCT. In the first step, drawing a quaternion random test matrix $\Omega$ needs $\mathcal{O}(I_2l)$, and converting $\dot{\mathcal{A}}$ to transform domain needs $\mathcal{O}(I_1I_2I_3\log(I_3))$. Then, the computational complexity for forming $\hat{\dot{\mathbf{Y}}}_i^k$ and $\dot{\mathbf{Y}}_i^k$ both  is  $\mathcal{O}(I_1I_2I_3l)$. The costs of QQR decomposition is $\mathcal{O}(I_1I_3l^2)$ and $\mathcal{O}(I_2I_3l^2)$, then computing the multiplication of $(\dot{\mathbf{Q}}_1^H\dot{\mathbf{A}})\dot{\mathbf{Q}}_2$ and QQRCP  need $\mathcal{O}(I_1I_2I_3l+I_2I_3l^2)$ and $\mathcal{O}(I_3l^3)$, respectively. Then, forming $\widetilde{\dot{\mathcal{U}}}$ and $\widetilde{\dot{\mathcal{V}}}$ require $\mathcal{O}(I_1I_3l^2)$ and $\mathcal{O}(I_2I_3l)$, respectively. At last, the computational complexity for transforming $\dot{\mathcal{U}}, \dot{\mathcal{R}}$ and $\dot{\mathcal{V}}$ to the original domain is $\mathcal{O}(I_1I_3l\log(I_3))$, $\mathcal{O}(I_2I_3l\log(I_3))$, and $\mathcal{O}(I_3l^2\log(I_3))$, respectively. Hence, the computational complexity of CoR-QURV is $\mathcal{O}(I_2l+I_1I_2I_3\log(I_3)+2I_1I_3l^2+2I_2I_3l^2+I_1I_2I_3l+I_2I_3l+I_3l^3+I_1I_3l\log(I_3)+I_2I_3l\log(I_3)+I_3l^2\log(I_3))$.
\begin{remark}
	Let a quaternion tensor $\dot{\mathcal{A}}\in\mathbb{H}^{{I_1\times I_2\times I_3}}$ as an example. For the $10$th step in Algorithm \ref{a1.4}, when $p=1$, the quaternion matrix $(\dot{\mathbf{Q}}_1^{k})^{H}\hat{\dot{\mathcal{A}}}(:, :, k)\mathbf{Q}_2^k$ can be replaced by $(\dot{\mathbf{Q}}_1^{k})^{H}\dot{\mathbf{Y}}_0^k((\dot{\mathbf{Q}}_2^{k})^{H}\dot{\mathbf{\Omega}})^{\dag}$ which requires $\mathcal{O}(I_1l^2+l^3+ I_2l^2)$ computational complexity for every $k$. However, the computational complexity of computing $((\dot{\mathbf{Q}}_1^{k})^{H}\hat{\dot{\mathcal{A}}}(:, :, k))\mathbf{Q}_2^k$ and $(\dot{\mathbf{Q}}_1^{k})^{H}(\hat{\dot{\mathcal{A}}}(:, :, k)\mathbf{Q}_2^k)$ is $\mathcal{O}(I_1I_2l+I_2l^2)$ and $\mathcal{O}(I_1I_2l+I_1l^2)$, respectively. When $l<\min({ I_1,  I_2})$, the cost of obtaining $(\dot{\mathbf{Q}}_1^{k})^{H}\dot{\mathbf{Y}}_0^k\dot{\mathbf{\Omega}}^{\dag}\dot{\mathbf{Q}}_2^k$ is less than the cost of obtaining $(\dot{\mathbf{Q}}_1^{k})^{H}\hat{\dot{\mathcal{A}}}(:, :, k)\mathbf{Q}_2^k$.
\end{remark}
\section{Theoretical Analysis}\label{TA}
In this section, the theoretical analysis of the above algorithms are giving, including the low-rank approximation of the randomized methods. These approximations are backed by theoretical assurances that ensure their accuracy in terms of the Frobenius norm. To achieve this objective, a theorem initially is provided.
\begin{theorem}
	Let $\dot{\mathbf{Q}}_1\in\mathbb{H}^{M\times l}$, $\dot{\mathbf{Q}}_2\in\mathbb{H}^{N\times l}$ be quaternion matrices constructed by Algorithm \ref{a1.3}. Let $\dot{\mathbf{M}}_K$ be the best rank$-K$ approximation of $\dot{\mathbf{Q}}_1^H\dot{\mathbf{A}}\dot{\mathbf{Q}_2}\in\mathbb{H}^{l\times l}$. Then, $\dot{\mathbf{M}}_K$ is an optimal solution of the following optimization problem:
	\begin{equation}\label{the1.1}
		\min\limits_{\dot{\mathbf{M}}, \text{rank}(\dot{\mathbf{M}})\leq k}\|\dot{\mathbf{A}}-\dot{\mathbf{Q}}_1\dot{\mathbf{M}}\dot{\mathbf{Q}}_2^H\|_F=\|\dot{\mathbf{A}}-\dot{\mathbf{Q}}_1\dot{\mathbf{M}}_K\dot{\mathbf{Q}}_2^H\|_F,
	\end{equation}
	besides,\begin{equation}\label{the1.2} \|\dot{\mathbf{A}}-\dot{\mathbf{Q}}_1\dot{\mathbf{M}}_K\dot{\mathbf{Q}}_2^H\|_F\leq\|\dot{\mathbf{A}}-\dot{\mathbf{A}}_K\|_F+\|\dot{\mathbf{A}}_K-\dot{\mathbf{Q}}_1\dot{\mathbf{Q}}_1^H\dot{\mathbf{A}}_K\|_F+\|\dot{\mathbf{A}}_K-\dot{\mathbf{A}}_K\dot{\mathbf{Q}}_2\dot{\mathbf{Q}}_2^H\|_F.
	\end{equation}
\end{theorem}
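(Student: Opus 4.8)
The plan is to prove the two assertions in turn: first the minimization identity \eqref{the1.1}, and then the bound \eqref{the1.2} as a consequence of the optimality just established. Throughout I write $\dot{\mathbf{P}}_1=\dot{\mathbf{Q}}_1\dot{\mathbf{Q}}_1^H$ and $\dot{\mathbf{P}}_2=\dot{\mathbf{Q}}_2\dot{\mathbf{Q}}_2^H$ for the orthogonal projections onto the column spaces of $\dot{\mathbf{Q}}_1,\dot{\mathbf{Q}}_2$, recalling that these matrices arise from QQR factorizations and hence satisfy $\dot{\mathbf{Q}}_1^H\dot{\mathbf{Q}}_1=\dot{\mathbf{Q}}_2^H\dot{\mathbf{Q}}_2=\mathbf{I}_l$.

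For the identity \eqref{the1.1}, I would complete $\dot{\mathbf{Q}}_1$ to a unitary $[\dot{\mathbf{Q}}_1,\ \dot{\mathbf{Q}}_1^{\perp}]\in\mathbb{H}^{M\times M}$ and $\dot{\mathbf{Q}}_2$ to a unitary $[\dot{\mathbf{Q}}_2,\ \dot{\mathbf{Q}}_2^{\perp}]\in\mathbb{H}^{N\times N}$, which is possible since any orthonormal quaternion system extends to an orthonormal basis. Using the bilateral unitary invariance of the Frobenius norm, I would conjugate the residual $\dot{\mathbf{A}}-\dot{\mathbf{Q}}_1\dot{\mathbf{M}}\dot{\mathbf{Q}}_2^H$ by these bases. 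A direct block computation shows that $[\dot{\mathbf{Q}}_1,\dot{\mathbf{Q}}_1^{\perp}]^H\dot{\mathbf{Q}}_1\dot{\mathbf{M}}\dot{\mathbf{Q}}_2^H[\dot{\mathbf{Q}}_2,\dot{\mathbf{Q}}_2^{\perp}]$ is the $2\times2$ block matrix with $\dot{\mathbf{M}}$ in the $(1,1)$ position and zeros elsewhere, so the transformed residual is the block matrix whose $(1,1)$ block is $\dot{\mathbf{Q}}_1^H\dot{\mathbf{A}}\dot{\mathbf{Q}}_2-\dot{\mathbf{M}}$ and whose three remaining blocks ($\dot{\mathbf{Q}}_1^H\dot{\mathbf{A}}\dot{\mathbf{Q}}_2^{\perp}$, $(\dot{\mathbf{Q}}_1^{\perp})^H\dot{\mathbf{A}}\dot{\mathbf{Q}}_2$, $(\dot{\mathbf{Q}}_1^{\perp})^H\dot{\mathbf{A}}\dot{\mathbf{Q}}_2^{\perp}$) do not depend on $\dot{\mathbf{M}}$. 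Since $\|\cdot\|_F^2$ of a block matrix is the sum of the squared Frobenius norms of its blocks, only the $(1,1)$ block is $\dot{\mathbf{M}}$-dependent, so minimizing $\|\dot{\mathbf{A}}-\dot{\mathbf{Q}}_1\dot{\mathbf{M}}\dot{\mathbf{Q}}_2^H\|_F$ over $\mathrm{rank}(\dot{\mathbf{M}})\le K$ is equivalent to minimizing $\|\dot{\mathbf{Q}}_1^H\dot{\mathbf{A}}\dot{\mathbf{Q}}_2-\dot{\mathbf{M}}\|_F$ over the same constraint. By the quaternion Eckart--Young property of QSVD (Theorem \ref{theorem1}, as already invoked to characterize $\dot{\mathbf{A}}_K$), the minimizer is precisely the best rank-$K$ approximation $\dot{\mathbf{M}}_K$ of $\dot{\mathbf{Q}}_1^H\dot{\mathbf{A}}\dot{\mathbf{Q}}_2$, which is \eqref{the1.1}.

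For the bound \eqref{the1.2}, I would exploit this optimality. Because $\dot{\mathbf{A}}_K$ has rank $K$, the matrix $\dot{\mathbf{Q}}_1^H\dot{\mathbf{A}}_K\dot{\mathbf{Q}}_2$ has rank at most $K$ and is an admissible competitor in \eqref{the1.1}, giving $\|\dot{\mathbf{A}}-\dot{\mathbf{Q}}_1\dot{\mathbf{M}}_K\dot{\mathbf{Q}}_2^H\|_F\le\|\dot{\mathbf{A}}-\dot{\mathbf{P}}_1\dot{\mathbf{A}}_K\dot{\mathbf{P}}_2\|_F$. I would then split the right-hand residual by the telescoping identity $\dot{\mathbf{A}}-\dot{\mathbf{P}}_1\dot{\mathbf{A}}_K\dot{\mathbf{P}}_2=(\dot{\mathbf{A}}-\dot{\mathbf{A}}_K)+(\dot{\mathbf{A}}_K-\dot{\mathbf{P}}_1\dot{\mathbf{A}}_K)+\dot{\mathbf{P}}_1(\dot{\mathbf{A}}_K-\dot{\mathbf{A}}_K\dot{\mathbf{P}}_2)$ and apply the triangle inequality. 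The first two summands reproduce the first two terms on the right of \eqref{the1.2}; for the third I would use that $\dot{\mathbf{P}}_1$ is an orthogonal projection, so $\|\dot{\mathbf{P}}_1\dot{\mathbf{X}}\|_F\le\|\dot{\mathbf{X}}\|_F$ (which follows from the Pythagorean splitting $\|\dot{\mathbf{X}}\|_F^2=\|\dot{\mathbf{P}}_1\dot{\mathbf{X}}\|_F^2+\|(\mathbf{I}-\dot{\mathbf{P}}_1)\dot{\mathbf{X}}\|_F^2$, whose cross term vanishes because $\dot{\mathbf{P}}_1(\mathbf{I}-\dot{\mathbf{P}}_1)=\mathbf{0}$), collapsing it to $\|\dot{\mathbf{A}}_K-\dot{\mathbf{A}}_K\dot{\mathbf{Q}}_2\dot{\mathbf{Q}}_2^H\|_F$. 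This yields exactly \eqref{the1.2}.

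The step requiring the most care is transferring the two classical tools to the non-commutative quaternion setting. Bilateral unitary invariance of $\|\cdot\|_F$ cannot be taken for granted, since the quaternion trace is not cyclic ($\mathrm{tr}(\dot{\mathbf{A}}\dot{\mathbf{B}})\neq\mathrm{tr}(\dot{\mathbf{B}}\dot{\mathbf{A}})$ in general); I would therefore justify left-invariance directly from $\dot{\mathbf{U}}^H\dot{\mathbf{U}}=\mathbf{I}$ via $\|\dot{\mathbf{U}}\dot{\mathbf{A}}\|_F^2=\mathrm{tr}(\dot{\mathbf{A}}^H\dot{\mathbf{U}}^H\dot{\mathbf{U}}\dot{\mathbf{A}})=\mathrm{tr}(\dot{\mathbf{A}}^H\dot{\mathbf{A}})$, and then deduce right-invariance through $\|\dot{\mathbf{A}}\dot{\mathbf{V}}\|_F=\|\dot{\mathbf{V}}^H\dot{\mathbf{A}}^H\|_F=\|\dot{\mathbf{A}}^H\|_F=\|\dot{\mathbf{A}}\|_F$, thereby never invoking cyclicity. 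Likewise I would rely on the quaternion Eckart--Young theorem guaranteed by Theorem \ref{theorem1}. Apart from these two points, the argument is a purely algebraic replay of the real URV analysis.
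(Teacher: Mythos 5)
Your proof is correct and follows essentially the same route as the paper's: both reduce \eqref{the1.1} to minimizing $\|\dot{\mathbf{Q}}_1^H\dot{\mathbf{A}}\dot{\mathbf{Q}}_2-\dot{\mathbf{M}}\|_F$ via an orthogonal (Pythagorean) splitting of the residual and then invoke the quaternion Eckart--Young property, and both obtain \eqref{the1.2} by inserting the rank-$\le K$ competitor $\dot{\mathbf{Q}}_1^H\dot{\mathbf{A}}_K\dot{\mathbf{Q}}_2$ and telescoping with the triangle inequality and the contractivity of the projections $\dot{\mathbf{Q}}_i\dot{\mathbf{Q}}_i^H$. The only difference is presentational: you realize the splitting by completing $\dot{\mathbf{Q}}_1,\dot{\mathbf{Q}}_2$ to full unitaries and reading off blocks (and you justify unitary invariance without appealing to cyclicity of the quaternion trace), whereas the paper adds and subtracts the doubly projected term directly; your write-up is, if anything, the more carefully justified of the two.
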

\begin{proof}
	Because the signal total energy computed in the spatial domain and the quaternion domain are equal, according to the Parseval theorem in the quaternion domain \cite{DBLP:journals/corr/Hitzer13d, DBLP:journals/cma/BahriHHA08}. The following relations are hold.
	\begin{equation}
		\begin{split}
			\|\dot{\mathbf{A}}-\dot{\mathbf{Q}}_1\dot{\mathbf{M}}\dot{\mathbf{Q}}_2^H\|_F^2&=\|\dot{\mathbf{A}}-\dot{\mathbf{Q}}_1\dot{\mathbf{Q}}_1^H\dot{\mathbf{M}}\dot{\mathbf{Q}}_2\dot{\mathbf{Q}}_2^H+\dot{\mathbf{Q}}_1\dot{\mathbf{Q}}_1^H\dot{\mathbf{M}}\dot{\mathbf{Q}}_2\dot{\mathbf{Q}}_2^H-\dot{\mathbf{Q}}_1\dot{\mathbf{M}}\dot{\mathbf{Q}}_2^H\|_F^2\\&=\|\dot{\mathbf{A}}-\dot{\mathbf{Q}}_1\dot{\mathbf{Q}}_1^H\dot{\mathbf{M}}\dot{\mathbf{Q}}_2\dot{\mathbf{Q}}_2^H\|_F^2+\|\dot{\mathbf{Q}}_1^H\dot{\mathbf{A}}\dot{\mathbf{Q}}_2-\dot{\mathbf{M}}\|_F^2.
		\end{split}
	\end{equation}	
	As pointed out in \cite{eckart1936approximation}, under the Frobenius norm, the truncated QSVD offers the best low-rank approximation of a quaternion matrix in a least-squares sense. Such that, the result in \eqref{the1.1} holds.
	
	Because $\dot{\mathbf{A}}_K$ is the best low-rank approximation to $\dot{\mathcal{A}}$, and $\dot{\mathbf{Q}}_1\dot{\mathbf{M}}_K\dot{\mathbf{Q}}_2$ is the best restricted (within a subspace) low-rank restriction to $\dot{\mathbf{A}}$ with respect to Frobenius norm. This leads to the following result
	\begin{equation}\label{the1.3}
		\begin{split}
			\|\dot{\mathbf{A}}-\dot{\mathbf{A}}_K\|_F\leq\|\dot{\mathbf{A}}-\dot{\mathbf{Q}}_1\dot{\mathbf{M}}_K\dot{\mathbf{Q}}_2\|_F\leq\|\dot{\mathbf{A}}-\dot{\mathbf{Q}}_1\dot{\mathbf{Q}}_1^H\dot{\mathbf{A}}_K\dot{\mathbf{Q}}_2\dot{\mathbf{Q}}_2^H\|_F.
		\end{split}
	\end{equation}	
	The second relation holds because $\dot{\mathbf{Q}}_1\dot{\mathbf{Q}}_1^H\dot{\mathbf{A}}\dot{\mathbf{Q}}_2\dot{\mathbf{Q}}_2^H$ is an undistinguished restricted Frobenius norm approximation to $\dot{\mathbf{A}}$. Next, we prove \eqref{the1.2} holds,
	\begin{equation}\label{the1.4}
		\begin{split}
			&	\|\dot{\mathbf{A}}-\dot{\mathbf{Q}}_1\dot{\mathbf{Q}}_1^H\dot{\mathbf{A}}_K\dot{\mathbf{Q}}_2\dot{\mathbf{Q}}_2^H\|_F^2\\&=\| \dot{\mathbf{A}}-\dot{\mathbf{A}}_K\dot{\mathbf{Q}}_2\dot{\mathbf{Q}}_2^H+\dot{\mathbf{A}}_K\dot{\mathbf{Q}}_2\dot{\mathbf{Q}}_2^H-\dot{\mathbf{Q}}_1\dot{\mathbf{Q}}_1^H\dot{\mathbf{A}}_K\dot{\mathbf{Q}}_2\dot{\mathbf{Q}}_2^H\|_F^2\\&=\| \dot{\mathbf{A}}-\dot{\mathbf{A}}_K\dot{\mathbf{Q}}_2\dot{\mathbf{Q}}_2^H\|_F^2+\|\dot{\mathbf{A}}_K\dot{\mathbf{Q}}_2\dot{\mathbf{Q}}_2^H-\dot{\mathbf{Q}}_1\dot{\mathbf{Q}}_1^H\dot{\mathbf{A}}_K\dot{\mathbf{Q}}_2\dot{\mathbf{Q}}_2^H\|_F^2\\&\quad+\mathfrak{R}[tr((\dot{\mathbf{A}}-\dot{\mathbf{A}}_K\dot{\mathbf{Q}}_2\dot{\mathbf{Q}}_2^H)^H(\dot{\mathbf{A}}_K\dot{\mathbf{Q}}_2\dot{\mathbf{Q}}_2^H-\dot{\mathbf{Q}}_1\dot{\mathbf{Q}}_1^H\dot{\mathbf{A}}_K\dot{\mathbf{Q}}_2\dot{\mathbf{Q}}_2^H))],
		\end{split}
	\end{equation}	
	where the third term in the right side of the equation is equal to
	\begin{equation*}
		\begin{split}
		&\mathfrak{R}[tr((\dot{\mathbf{A}}-\dot{\mathbf{A}}_K\dot{\mathbf{Q}}_2\dot{\mathbf{Q}}_2^H)^H(\mathbf{I}_{M\times M}-\dot{\mathbf{Q}}_1\dot{\mathbf{Q}}_1^H)(\dot{\mathbf{A}}_K\dot{\mathbf{Q}}_2\dot{\mathbf{Q}}_2^H))]\\&=\mathfrak{R}[tr((\mathbf{I}_{M\times M}-\dot{\mathbf{Q}}_1\dot{\mathbf{Q}}_1^H)(\dot{\mathbf{A}}-\dot{\mathbf{A}}_K\dot{\mathbf{Q}}_2\dot{\mathbf{Q}}_2^H)^H(\dot{\mathbf{A}}_K\dot{\mathbf{Q}}_2\dot{\mathbf{Q}}_2^H))]=0.
	\end{split} 
	\end{equation*} 
	
	Combining \eqref{the1.3} and \eqref{the1.4}, then we have
	\begin{equation}
		\begin{split}
			&\|\dot{\mathbf{A}}-\dot{\mathbf{Q}}_1\dot{\mathbf{M}}_K\dot{\mathbf{Q}}_2^H\|_F\\&\leq\| \dot{\mathbf{A}}-\dot{\mathbf{A}}_K\dot{\mathbf{Q}}_2\dot{\mathbf{Q}}_2^H\|_F+\|\dot{\mathbf{A}}_K\dot{\mathbf{Q}}_2\dot{\mathbf{Q}}_2^H-\dot{\mathbf{Q}}_1\dot{\mathbf{Q}}_1^H\dot{\mathbf{A}}_K\dot{\mathbf{Q}}_2\dot{\mathbf{Q}}_2^H\|_F\\&=\| \dot{\mathbf{A}}-\dot{\mathbf{A}}_K+\dot{\mathbf{A}}_K-\dot{\mathbf{A}}_K\dot{\mathbf{Q}}_2\dot{\mathbf{Q}}_2^H\|_F+\|\dot{\mathbf{A}}_K\dot{\mathbf{Q}}_2\dot{\mathbf{Q}}_2^H-\dot{\mathbf{Q}}_1\dot{\mathbf{Q}}_1^H\dot{\mathbf{A}}_K\dot{\mathbf{Q}}_2\dot{\mathbf{Q}}_2^H\|_F\\&\leq\| \dot{\mathbf{A}}-\dot{\mathbf{A}}_K\|_F+\|\dot{\mathbf{A}}_K-\dot{\mathbf{A}}_K\dot{\mathbf{Q}}_2\dot{\mathbf{Q}}_2^H\|_F+\|\dot{\mathbf{A}}_K\dot{\mathbf{Q}}_2\dot{\mathbf{Q}}_2^H-\dot{\mathbf{Q}}_1\dot{\mathbf{Q}}_1^H\dot{\mathbf{A}}_K\dot{\mathbf{Q}}_2\dot{\mathbf{Q}}_2^H\|_F\\&=\| \dot{\mathbf{A}}-\dot{\mathbf{A}}_K\|_F+\|\dot{\mathbf{A}}_K-\dot{\mathbf{A}}_K\dot{\mathbf{Q}}_2\dot{\mathbf{Q}}_2^H\|_F+\|\dot{\mathbf{A}}_K-\dot{\mathbf{Q}}_1\dot{\mathbf{Q}}_1^H\dot{\mathbf{A}}_K\|_F.
		\end{split}
	\end{equation}
\end{proof}

Basing on Algorithm \ref{a1.3}, let  $\hat{\dot{\mathbf{A}}}_{CoR}=\dot{\mathbf{Q}}_1\dot{\mathbf{M}}\dot{\mathbf{Q}}_2^H$ be the CoR-QURV low-rank approximation and $\dot{\mathbf{M}}_K$ be the best rank-K approximation of $\dot{\mathbf{M}}$, then we have $\|\dot{\mathbf{A}}-\hat{\dot{\mathbf{A}}}_{CoR}\|_F\leq\|\dot{\mathbf{A}}-\dot{\mathbf{Q}}_1\dot{\mathbf{M}}_K\dot{\mathbf{Q}}_2^H\|_F$. Then, 
\begin{equation*}
\|\dot{\mathbf{A}}-\hat{\dot{\mathbf{A}}}_{CoR}\|_F\leq\| \dot{\mathbf{A}}-\dot{\mathbf{A}}_K\|_F+\|\dot{\mathbf{A}}_K-\dot{\mathbf{A}}_K\dot{\mathbf{Q}}_2\dot{\mathbf{Q}}_2^H\|_F+\|\dot{\mathbf{A}}_K-\dot{\mathbf{Q}}_1\dot{\mathbf{Q}}_1^H\dot{\mathbf{A}}_K\|_F
\end{equation*}  holds.

Let $\dot{\mathbf{A}}\in\mathbb{H}^{M\times N}$, and the QSVD of $\dot{\mathbf{A}}$ is
\begin{equation}
	\dot{\mathbf{A}}=[\dot{\mathbf{U}}_K,\; \dot{\mathbf{U}}_0]\left[\begin{array}{cc}
		\mathbf{\Sigma}_K& \mathbf{0}  \\
		\mathbf{0} & \mathbf{\Sigma}_0\\
	\end{array}\right][\dot{\mathbf{V}}_K,\; \dot{\mathbf{V}}_0]^H,
\end{equation} 
 where  $\dot{\mathbf{U}}_K\in\mathbb{H}^{M\times K}$,  $\dot{\mathbf{U}}_0\in\mathbb{H}^{M\times M-K}$ and  $\dot{\mathbf{V}}_K\in\mathbb{H}^{N\times K}$,  $\dot{\mathbf{V}}_0\in\mathbb{H}^{N\times N-K}$ are column orthogonal, and $\mathbf{\Sigma}_K\in\mathbb{R}^{K\times K}$.
  All $\mathbf{\Sigma}_i$ are real diagonal matrices, and $\sigma_{k} (k=1,\ldots, K)$ are singular values. Let 
  \begin{equation*}
  \mathbf{\Sigma}_0=[\mathbf{\Sigma}_2,\;\mathbf{\Sigma}_3]\in\mathbb{R}^{M-K\times N-K},
  \end{equation*} 
where $\mathbf{\Sigma}_2\in\mathbb{R}^{l-P-K\times l-P-K}$ and $\mathbf{\Sigma}_2\in\mathbb{R}^{M-l+P\times N-l+P}$.
  $P$ is an integer satisfies $2\leq P+K\leq l$, we define
  \begin{equation*}
  	\tilde{\dot{\Omega}}=\dot{\mathbf{V}}^H\dot{\mathbf{\Omega}}=[\tilde{\dot{\mathbf{\Omega}}}_1^H,\;\tilde{\dot{\mathbf{\Omega}}}_2 ^H]^H\in\mathbb{H}^{N\times l},
  \end{equation*}  where $\tilde{\dot{\Omega}}_1\in\mathbb{H}^{l-P\times l}$ and $\tilde{\dot{\Omega}}_2\in\mathbb{H}^{N-l+P\times l}$. The upper bound of $\|\dot{\mathbf{A}}-\hat{\dot{\mathbf{A}}}_{CoR}\|_F$ is hold in the quaternion domain, and can be summarized in the following theorem.

\begin{theorem}\label{th1.5}
	Assume that the quaternion matrix $\dot{\mathbf{A}}$ has a QSVD, as stated above, and that $2\leq P+K\leq l$. The quaternion matrix $\hat{\dot{\mathbf{A}}}_{CoR}$ is created using Algorithm \ref{a1.3}. Given that $\tilde{\dot{\mathbf{\Omega}}}_1$ has a full row rank. Then 
	\begin{equation}\label{the1.5}
		\|\dot{\mathbf{A}}-\hat{\dot{\mathbf{A}}}_{CoR}\|_F\leq	\|\dot{\mathbf{A}}_0\|_F+\sqrt{\frac{\alpha^2\|\tilde{\dot{\mathbf{\Omega}}}_2\|_2^2\|\tilde{\dot{\mathbf{\Omega}}}_1^\dagger\|_2^2}{1+\beta^2\|\tilde{\dot{\mathbf{\Omega}}}_2\|_2^2\|\tilde{\dot{\mathbf{\Omega}}}_1^\dagger\|_2^2}}+\sqrt{\frac{\eta^2\|\tilde{\dot{\mathbf{\Omega}}}_2\|_2^2\|\tilde{\dot{\mathbf{\Omega}}}_1^\dagger\|_2^2}{1+\tau^2\|\tilde{\dot\mathbf{\mathbf{\Omega}}}}_2\|_2^2\|\tilde{\dot{\mathbf{\Omega}}}_1^\dagger\|_2^2}},
\end{equation}
where $\alpha=\sqrt{K}\frac{\sigma_{l-P+1}^2}{\sigma_{K}}(\frac{\sigma_{l-P+1}}{\sigma_{K}})^{2p}$, $\beta=\frac{\sigma_{l-P+1}^2}{\sigma_{1}\sigma_{K}}(\frac{\sigma_{l-P+1}}{\sigma_{K}})^{2p}$, $\eta=\frac{\sigma_{K}}{\sigma_{l-P+1}}\alpha$, $\tau=\frac{1}{\sigma_{l-P+1}}\beta$.
\end{theorem}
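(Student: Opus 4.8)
The plan is to start from the three-term estimate already established just before the theorem, namely
$\|\dot{\mathbf{A}}-\hat{\dot{\mathbf{A}}}_{CoR}\|_F\leq\|\dot{\mathbf{A}}-\dot{\mathbf{A}}_K\|_F+\|\dot{\mathbf{A}}_K-\dot{\mathbf{A}}_K\dot{\mathbf{Q}}_2\dot{\mathbf{Q}}_2^H\|_F+\|\dot{\mathbf{A}}_K-\dot{\mathbf{Q}}_1\dot{\mathbf{Q}}_1^H\dot{\mathbf{A}}_K\|_F$,
and to bound the three summands separately. The first is exactly $\|\dot{\mathbf{A}}_0\|_F$, since $\dot{\mathbf{A}}_K$ is the truncated QSVD. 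For the second, using $\dot{\mathbf{A}}_K=\dot{\mathbf{U}}_K\mathbf{\Sigma}_K\dot{\mathbf{V}}_K^H$, that $\dot{\mathbf{U}}_K$ is an isometry, and invariance of $\|\cdot\|_F$ under conjugate transpose, I would rewrite $\|\dot{\mathbf{A}}_K(\mathbf{I}-\dot{\mathbf{Q}}_2\dot{\mathbf{Q}}_2^H)\|_F=\|(\mathbf{I}-\dot{\mathbf{Q}}_2\dot{\mathbf{Q}}_2^H)\dot{\mathbf{V}}_K\mathbf{\Sigma}_K\|_F$, and symmetrically the third becomes $\|(\mathbf{I}-\dot{\mathbf{Q}}_1\dot{\mathbf{Q}}_1^H)\dot{\mathbf{U}}_K\mathbf{\Sigma}_K\|_F$. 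Everything then reduces to measuring how well the sketched subspaces capture the leading $K$ right (resp.\ left) singular directions.

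Next I would make the power-iteration structure explicit. Writing $\dot{\mathbf{A}}=\dot{\mathbf{U}}\mathbf{\Lambda}\dot{\mathbf{V}}^H$, the sketch $\dot{\mathbf{Q}}_2$ appearing in the right projection spans the range of $\hat{\dot{\mathbf{Y}}}_p=(\dot{\mathbf{A}}^H\dot{\mathbf{A}})^p\dot{\mathbf{\Omega}}=\dot{\mathbf{V}}\mathbf{D}\tilde{\dot{\mathbf{\Omega}}}$ with $\mathbf{D}=\operatorname{diag}(\sigma_i^{2p})$ and $\tilde{\dot{\mathbf{\Omega}}}=\dot{\mathbf{V}}^H\dot{\mathbf{\Omega}}$, whereas $\dot{\mathbf{Q}}_1$ spans the range of $\dot{\mathbf{Y}}_p=\dot{\mathbf{A}}(\dot{\mathbf{A}}^H\dot{\mathbf{A}})^p\dot{\mathbf{\Omega}}=\dot{\mathbf{U}}\mathbf{E}\tilde{\dot{\mathbf{\Omega}}}$ with $\mathbf{E}=\operatorname{diag}(\sigma_i^{2p+1})$; the two cases differ only by this one extra power of $\mathbf{\Lambda}$. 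Splitting $\mathbf{D}=\operatorname{diag}(\mathbf{D}_1,\mathbf{D}_2)$ and $\tilde{\dot{\mathbf{\Omega}}}=[\tilde{\dot{\mathbf{\Omega}}}_1^H,\tilde{\dot{\mathbf{\Omega}}}_2^H]^H$ at index $l-P$, and using the full-row-rank hypothesis so that $\tilde{\dot{\mathbf{\Omega}}}_1\tilde{\dot{\mathbf{\Omega}}}_1^\dagger=\mathbf{I}$, I would form the test matrix $\dot{\mathbf{W}}=\hat{\dot{\mathbf{Y}}}_p\tilde{\dot{\mathbf{\Omega}}}_1^\dagger\mathbf{D}_1^{-1}$, whose range lies inside the range of $\dot{\mathbf{Q}}_2$ and whose coordinates in the basis $\dot{\mathbf{V}}$ are exactly $[\mathbf{I}_{l-P};\dot{\mathbf{G}}]$ with $\dot{\mathbf{G}}=\mathbf{D}_2\tilde{\dot{\mathbf{\Omega}}}_2\tilde{\dot{\mathbf{\Omega}}}_1^\dagger\mathbf{D}_1^{-1}$. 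Since the range of $\dot{\mathbf{W}}$ is contained in that of $\dot{\mathbf{Q}}_2$, the complementary orthogonal projectors are ordered, so the residual of each coordinate direction $\dot{\mathbf{v}}_i=\dot{\mathbf{V}}\dot{\mathbf{e}}_i$ ($i\le K\le l-P$) is dominated by its residual against $\dot{\mathbf{W}}$; a rank-one (Sherman--Morrison) computation then evaluates the latter as $\|\dot{\mathbf{g}}_i\|^2/(1+\|\dot{\mathbf{g}}_i\|^2)$ with $\dot{\mathbf{g}}_i=\dot{\mathbf{G}}\dot{\mathbf{e}}_i$, which is the origin of the $x/(1+x)$ shape of the bound.

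Finally I would assemble the constants. Collecting the $\mathbf{\Sigma}_K$ weights gives $\|(\mathbf{I}-\dot{\mathbf{Q}}_2\dot{\mathbf{Q}}_2^H)\dot{\mathbf{V}}_K\mathbf{\Sigma}_K\|_F^2\le\sum_{i=1}^K\sigma_i^2\,\|\dot{\mathbf{g}}_i\|^2/(1+\|\dot{\mathbf{g}}_i\|^2)$, and the estimate $\|\dot{\mathbf{g}}_i\|\le(\sigma_{l-P+1}/\sigma_i)^{2p}\|\tilde{\dot{\mathbf{\Omega}}}_2\|_2\|\tilde{\dot{\mathbf{\Omega}}}_1^\dagger\|_2$ (from $\|\mathbf{D}_2\|_2=\sigma_{l-P+1}^{2p}$, $\sigma_i\ge\sigma_K$ for $i\le K$, and submultiplicativity), combined with the monotonicity of $x/(1+x)$, collapses the sum into a single fraction of the form $\alpha^2 C/(1+\beta^2 C)$ with $C=\|\tilde{\dot{\mathbf{\Omega}}}_2\|_2^2\|\tilde{\dot{\mathbf{\Omega}}}_1^\dagger\|_2^2$. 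Running the identical argument with $\mathbf{E}$ in place of $\mathbf{D}$ (one extra power, i.e.\ $\sigma_{l-P+1}^{2p+1}$ and $\sigma_i^{-(2p+1)}$) produces the third summand with constants rescaled by the single factor $\sigma_K/\sigma_{l-P+1}$ in $\alpha$ and $1/\sigma_{l-P+1}$ in $\beta$, matching the stated relations $\eta=(\sigma_K/\sigma_{l-P+1})\alpha$ and $\tau=\beta/\sigma_{l-P+1}$. Summing the three pieces yields \eqref{the1.5}.

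The obstacle is twofold. First, the deterministic projection machinery --- the ordering of orthogonal projectors, the rank-one residual identity, the operator monotonicity of $x/(1+x)$, and the identity $\tilde{\dot{\mathbf{\Omega}}}_1\tilde{\dot{\mathbf{\Omega}}}_1^\dagger=\mathbf{I}$ --- must be justified over the noncommutative field $\mathbb{H}$. These follow from self-adjointness of the relevant Gram matrices together with the quaternion spectral and pseudoinverse theory, using the real-part trace $\mathfrak{R}[\operatorname{tr}(\cdot)]$ as inner product exactly as in the preceding theorem, but each manipulation must be checked so that no illegitimate commutation is invoked. Second, and more delicate, is the bookkeeping that turns $\sum_{i=1}^K\sigma_i^2\,\|\dot{\mathbf{g}}_i\|^2/(1+\|\dot{\mathbf{g}}_i\|^2)$ into the precise closed form $\alpha^2 C/(1+\beta^2 C)$: one must track the gap ratio $\sigma_{l-P+1}/\sigma_K$ through the two power exponents and decide carefully which singular-value factors are retained in the numerator versus the denominator, since that choice is exactly what fixes the final coefficients $\alpha,\beta,\eta,\tau$.
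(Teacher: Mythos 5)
Your proposal follows essentially the same route as the paper, whose own proof is little more than a pointer: it cites Theorem 5 of the real CoR-UTV analysis of Kaloorazi and de Lamare, writes down the two quaternion perturbation matrices $\dot{\mathbf{W}}$ and $\dot{\mathbf{D}}$, quotes the intermediate deterministic bounds \eqref{e1.1}--\eqref{e1.2}, bounds $\|\dot{\mathbf{W}}\mathbf{\Sigma}_1\|_2$ and $\|\dot{\mathbf{D}}\mathbf{\Sigma}_1\|_2$ as in \eqref{e1.3}--\eqref{e1.4}, and concludes by monotonicity of $x/\sqrt{1+x^2}$. The machinery you spell out --- the three-term split from the preceding theorem, reduction to subspace-capture residuals for $\dot{\mathbf{U}}_K\mathbf{\Sigma}_K$ and $\dot{\mathbf{V}}_K\mathbf{\Sigma}_K$, the test matrix with coordinates $[\mathbf{I};\dot{\mathbf{G}}]$, the projector ordering, and the $x/(1+x)$ rank-one residual --- is exactly what is hidden behind that citation; your $\dot{\mathbf{G}}$ restricted to its first $K$ columns is the paper's $\dot{\mathbf{W}}$ (resp.\ $\dot{\mathbf{D}}$), and your quaternion-specific worries are handled in the paper only by the remark that singular values are real and norms pass to the real counterpart. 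The one substantive point of divergence is the power bookkeeping you yourself flag: you assign $\mathbf{D}=\operatorname{diag}(\sigma_i^{2p})$ to the sketch $\hat{\dot{\mathbf{Y}}}_p=(\dot{\mathbf{A}}^H\dot{\mathbf{A}})^{p}\dot{\mathbf{\Omega}}$ underlying the right basis, which yields a right-projection term proportional to $\sigma_K(\sigma_{l-P+1}/\sigma_K)^{2p}$, whereas the paper's $\dot{\mathbf{W}}$ carries powers $2p+2$ (i.e.\ the sketch $(\dot{\mathbf{A}}^H\dot{\mathbf{A}})^{p+1}\dot{\mathbf{\Omega}}$, the CoR-UTV convention in which the second compression matrix is $\dot{\mathbf{A}}^H$ applied to the final power-iterated sample) and hence produces the stated $\alpha=\sqrt{K}\tfrac{\sigma_{l-P+1}^2}{\sigma_K}(\sigma_{l-P+1}/\sigma_K)^{2p}$. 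This off-by-one is an inconsistency internal to the paper (the loop in Algorithm \ref{a1.3} produces powers $2p$ and $2p+1$, while the proof's $\dot{\mathbf{W}},\dot{\mathbf{D}}$ use $2p+2$ and $2p+1$); your argument is sound but, read against the algorithm as written, proves \eqref{the1.5} with that term carrying the slightly weaker constants corresponding to powers $2p$ (weaker since $\sigma_K\ge\sigma_{l-P+1}^2/\sigma_K$), so you should either adopt the paper's convention for the right sketch or state your constants explicitly.
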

\begin{proof}
Because the singular values of quaternion matrix are real, and the norm function also can be represented by the real counterpart as can be seen in the Appendix, the proof of Theorem \ref{th1.5} is similar to the proof of Theorem 5 in \cite{DBLP:journals/tsp/KalooraziL18}. Likewise, we can construct two quaternion matrices as 
\begin{equation*}
	\dot{\mathbf{W}}=(\mathbf{\Sigma}_3^T\mathbf{\Sigma}_3)^{p+1}\tilde{\dot{\mathbf{\Omega}}}_2\tilde{\dot{\mathbf{\Omega}}}_1^\dagger\left(\begin{array}{c}
		\mathbf{\Sigma}_K^{-2p-2}\\\mathbf{0}_{l-P-K\times K}
	\end{array}\right),\end{equation*}
\begin{equation*}
	\dot{\mathbf{D}}=\mathbf{\Sigma}_3(\mathbf{\Sigma}_3^T\mathbf{\Sigma}_3)^p\tilde{\dot{\mathbf{\Omega}}}_2\tilde{\dot{\mathbf{\Omega}}}_1^\dagger\left(\begin{array}{c}
		\mathbf{\Sigma}_K^{-2p-1}\\\mathbf{0}_{l-P-K\times K}
	\end{array}\right).
\end{equation*}
Besides, 
\begin{equation}\label{e1.1}
	\|\dot{\mathbf{A}}_K-\dot{\mathbf{Q}}_1\dot{\mathbf{Q}}_1^H\dot{\mathbf{A}}_K\|_F\leq\frac{\sqrt{K}\|\dot{\mathbf{D}}\mathbf{\Sigma}_1\|_2\sigma_1}{\sqrt{\|\dot{\mathbf{D}}\mathbf{\Sigma}_1\|_2^2+\sigma_1^2}},
\end{equation}
\begin{equation}\label{e1.2}
	\|\dot{\mathbf{A}}_K-\dot{\mathbf{A}}_K\dot{\mathbf{Q}}_2\dot{\mathbf{Q}}_2^H\|_F\leq\frac{\sqrt{K}\|\dot{\mathbf{W}}\mathbf{\Sigma}_1\|_2\sigma_1}{\sqrt{\|\dot{\mathbf{W}}\mathbf{\Sigma}_1\|_2^2+\sigma_1^2}}\\
\end{equation}
are also hold in the quaternion domain. 

According to the definition of $\dot{\mathbf{W}}$, $\dot{\mathbf{D}}$, and the property of $\|\dot{\mathbf{A}}\|_2=\|\dot{\mathbf{A}}_C\|_2$, where $\dot{\mathbf{A}}_C$ is the column representation of $\dot{\mathbf{A}}$. Then we have 
\begin{equation}\label{e1.3}
	\frac{	\|\dot{\mathbf{W}}\mathbf{\Sigma}_1\|_2}{\|\tilde{\dot{\mathbf{\Omega}}}_2\|_2\|\tilde{\dot{\mathbf{\Omega}}}_1^\dagger\|_2}\leq\frac{\sigma_{l-P+1}^2}{\sigma_{K}}(\frac{\sigma_{l-P+1}}{\sigma_{K}})^{2p}, 
\end{equation}
\begin{equation}\label{e1.4}
	\frac{	\|\dot{\mathbf{D}}\mathbf{\Sigma}_1\|_2}{\|\tilde{\dot{\mathbf{\Omega}}}_2\|_2\|\tilde{\dot{\mathbf{\Omega}}}_1^\dagger\|_2}\leq\sigma_{l-P+1}(\frac{\sigma_{l-P+1}}{\sigma_{K}})^{2p}.
\end{equation}
Because the real function $f(x)=\frac{x}{\sqrt{1+x^2}}$ is monotonically increasing and substituting \eqref{e1.3} and \eqref{e1.4} into \eqref{e1.1} and \eqref{e1.2}, then the theorem holds. 
\end{proof}
\begin{remark}
Theorem \ref{th1.5} shows that the upper bound of $\|\dot{\mathcal{A}}-\dot{\mathcal{A}}_{CoR}\|_F$ mainly depends on the ratio $\frac{\sigma_{l-P+1}}{\sigma_{K}}$. The power approaches reduce the additional components in the error boundary by exponentially decreasing the aforementioned ratios. Therefore, as $p$ increases, the components decrease rapidly, approaching zero in an exponential manner. Nevertheless, this leads to the increase of the computation cost.
\end{remark}
\begin{proposition}\label{p1}
Let $\dot{\mathbf{\Omega}}=\mathbf{\Omega}_0+\mathbf{\Omega}_1i+\mathbf{\Omega}_2j+\mathbf{\Omega}_3k\in\mathbb{H}^{M\times N}, \;M\leq N$, where $\mathbf{\Omega}_i$ $(i=0, 1, 2,3)$ are standard Gaussian matrices. For any $\alpha>0$, we have
\begin{equation*}
\mathbb{E}\left(\sqrt{\frac{\alpha^2\|\dot{\mathbf{\Omega}}\|_2^2}{1+\beta^2\|\dot{\mathbf{\Omega}}\|_2^2}}\right)\leq\sqrt{\frac{\alpha^2\nu^2}{1+\beta^2\nu^2}}, 
\end{equation*} 
where $\nu=3(\sqrt{M}+\sqrt{N})+3$.
\end{proposition}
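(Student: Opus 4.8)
The plan is to note that the averaged quantity is a fixed scalar function of the single nonnegative random variable $X:=\|\dot{\mathbf{\Omega}}\|_2$. Setting $f(s)=\alpha s/\sqrt{1+\beta^2 s^2}$ for $s\ge0$, the left-hand side of the claim is precisely $\mathbb{E}[f(X)]$ and the right-hand side is precisely $f(\nu)$. Hence the proposition reduces to the single inequality $\mathbb{E}[f(X)]\le f(\nu)$, which I would split into a probabilistic Jensen step and a deterministic monotonicity step driven by a mean bound on $\|\dot{\mathbf{\Omega}}\|_2$.

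For the Jensen step I first check that $f$ is increasing and concave on $[0,\infty)$: differentiating gives $f'(s)=\alpha(1+\beta^2 s^2)^{-3/2}>0$ and $f''(s)=-3\alpha\beta^2 s(1+\beta^2 s^2)^{-5/2}\le0$, with the degenerate case $\beta=0$ reducing $f$ to a linear function and making the argument trivial. Concavity yields $\mathbb{E}[f(X)]\le f(\mathbb{E}X)$ by Jensen's inequality, and monotonicity then turns any bound $\mathbb{E}X\le\nu$ into $f(\mathbb{E}X)\le f(\nu)$. This is routine calculus---indeed the same monotonicity of $x/\sqrt{1+x^2}$ was already used in the proof of Theorem~\ref{th1.5}---so the only real content left is the mean bound.

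The substantive and hardest step is therefore $\mathbb{E}\|\dot{\mathbf{\Omega}}\|_2\le\nu=3(\sqrt{M}+\sqrt{N})+3$, where the quaternion structure must be handled. I would pass to a norm-preserving matrix representation of $\dot{\mathbf{\Omega}}$---either the real column representation, using the identity $\|\dot{\mathbf{A}}\|_2=\|\dot{\mathbf{A}}_C\|_2$ already invoked above, or the $2M\times 2N$ complex representation whose singular values coincide in pairs with those of $\dot{\mathbf{\Omega}}$---so that $\|\dot{\mathbf{\Omega}}\|_2$ becomes the spectral norm of an explicit matrix assembled from the four independent standard Gaussian blocks $\mathbf{\Omega}_0,\dots,\mathbf{\Omega}_3$. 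I would then estimate its expected spectral norm by combining the classical non-asymptotic bound $\mathbb{E}\|\mathbf{G}\|_2\le\sqrt{M}+\sqrt{N}$ for a real Gaussian $\mathbf{G}\in\mathbb{R}^{M\times N}$ ($M\le N$), applied to the blocks, with Gaussian concentration $\mathbb{P}(\|\mathbf{G}\|_2>\sqrt{M}+\sqrt{N}+t)\le e^{-t^2/2}$ to produce the additive constant. The main obstacle is exactly the constant bookkeeping: each Gaussian block appears several times (with signs) inside the representation, so a naive triangle-inequality split overestimates the leading coefficient, and a sharper grouping---for instance collapsing the four real blocks into two complex Gaussian blocks before estimating---is needed to bring the factor down to $3$ and to absorb the deviation term into the additive $3$. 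With $\mathbb{E}\|\dot{\mathbf{\Omega}}\|_2\le\nu$ in hand, the two preceding paragraphs deliver the claim at once.
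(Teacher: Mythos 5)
Your proof is correct, but it takes a genuinely different route from the paper's. You reduce everything to Jensen's inequality: since $f(s)=\alpha s(1+\beta^2s^2)^{-1/2}$ satisfies $f'(s)=\alpha(1+\beta^2s^2)^{-3/2}>0$ and $f''(s)=-3\alpha\beta^2s(1+\beta^2s^2)^{-5/2}\le 0$ on $[0,\infty)$, you get $\mathbb{E}f(X)\le f(\mathbb{E}X)\le f(\nu)$ from nothing more than the mean bound $\mathbb{E}\|\dot{\mathbf{\Omega}}\|_2\le\nu$. The paper instead never invokes concavity: it uses the tail-integral identity $\mathbb{E}g(\mathbf{G})=\int_0^\infty g'(x)\mathbb{P}\{\|\mathbf{G}\|_2\ge x\}\,dx$ together with Lipschitz concentration of the spectral norm of the column representation, splits the integral at $\epsilon=3(\sqrt{M}+\sqrt{N})+1$, bounds the Gaussian tail contribution, and then solves an auxiliary inequality for the smallest admissible $\nu$ (arriving, in fact, at $\epsilon+3=3(\sqrt{M}+\sqrt{N})+4$ rather than the $+3$ in the statement). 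Your argument is both more elementary and sharper: the mean bound you need is exactly the paper's own Proposition~\ref{pr5}, $\mathbb{E}\|\dot{\mathbf{\Omega}}\|_2\le 3(\sqrt{M}+\sqrt{N})$, so your final paragraph's careful bookkeeping over the real or complex representations is unnecessary --- you may simply cite that lemma, and concavity then yields the claim with $\nu=3(\sqrt{M}+\sqrt{N})$, which is stronger than what is asserted. What the paper's heavier machinery buys is reusability: the concentration-plus-tail-integration template transfers to Proposition~\ref{p2}, where the relevant random variable is $\|\dot{\mathbf{\Omega}}^\dagger\|_2$ and a clean mean bound of the required strength is not available, so Jensen alone would not close that companion result.
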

The proof is given in the Appendix.
\begin{proposition}\label{p2}
Let $\dot{\mathbf{\Omega}}=\mathbf{\Omega}_0+\mathbf{\Omega}_1i+\mathbf{\Omega}_2j+\mathbf{\Omega}_3k\in\mathbb{H}^{M\times N}, \;M\leq N$, where $\mathbf{\Omega}_i$ $(i=0, 1, 2,3)$ are standard Gaussian matrices. For any $\alpha>0$, we have
\begin{equation*}
\mathbb{E}\left(\sqrt{\frac{\alpha^2\|\dot{\mathbf{\Omega}}^\dagger\|_2^2}{1+\beta^2\|\dot{\mathbf{\Omega}}^\dagger\|_2^2}}\right)\leq\sqrt{\frac{\alpha^2\nu^2}{1+\beta^2\nu^2}},
\end{equation*}  
where $\nu=\frac{\mathit{e}\sqrt{4N+2}}{p+1}$.
\end{proposition}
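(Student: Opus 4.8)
The plan is to isolate the randomness into the single scalar $s\triangleq\|\dot{\mathbf{\Omega}}^\dagger\|_2$ and treat the rest as a deterministic analytic reduction. Writing the integrand as $\psi(s)=\alpha s/\sqrt{1+\beta^2 s^2}$, I would first record that $\psi$ is nonnegative, increasing, and concave on $[0,\infty)$, since a direct computation gives $\psi'(s)=\alpha(1+\beta^2 s^2)^{-3/2}>0$ and $\psi''(s)=-3\alpha\beta^2 s(1+\beta^2 s^2)^{-5/2}\leq 0$. Concavity lets me apply Jensen's inequality to obtain $\mathbb{E}[\psi(s)]\leq\psi(\mathbb{E}[s])$, and monotonicity then lets me replace $\mathbb{E}[s]$ by any upper bound for it without decreasing the right-hand side. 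Since $\psi(\nu)=\sqrt{\alpha^2\nu^2/(1+\beta^2\nu^2)}$ reproduces exactly the claimed bound, the entire proposition reduces to the single first-moment estimate $\mathbb{E}\,\|\dot{\mathbf{\Omega}}^\dagger\|_2\leq\nu$ with $\nu=e\sqrt{4N+2}/(p+1)$.

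Next I would establish this moment estimate. Because $M\leq N$, the quaternion Gaussian matrix $\dot{\mathbf{\Omega}}$ has full row rank almost surely, so $\dot{\mathbf{\Omega}}^\dagger$ is its right inverse and $\|\dot{\mathbf{\Omega}}^\dagger\|_2=1/\sigma_{\min}(\dot{\mathbf{\Omega}})$; the task is therefore a smallest-singular-value bound for a quaternion Gaussian ensemble. I would pass to a faithful real (or complex adjoint) representation of $\dot{\mathbf{\Omega}}$ under which each nonzero singular value is preserved up to a fixed multiplicity, so that $\|\dot{\mathbf{\Omega}}^\dagger\|_2$ equals the spectral norm of the pseudoinverse of the representing matrix. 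On that matrix I would invoke a Gaussian pseudoinverse moment bound of the $e\sqrt{k+p}/p$ type, whose transcription into the quaternion dimensions yields precisely the numerator $\sqrt{4N+2}$ and the denominator $p+1$ defining $\nu$; the quaternion version of this estimate is available from \cite{DBLP:journals/siamsc/LiuLJ22}.

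The main obstacle is exactly this moment estimate, and the subtlety is that the real representation of a quaternion Gaussian matrix is \emph{not} a matrix of i.i.d.\ Gaussian entries: its blocks are correlated copies and conjugates of the four real parts $\mathbf{\Omega}_0,\dots,\mathbf{\Omega}_3$, so the classical smallest-singular-value tail bound cannot be quoted verbatim. I expect to handle this either by working directly with the structured ensemble and deriving the required lower tail for $\sigma_{\min}(\dot{\mathbf{\Omega}})$ in the quaternion setting, exploiting the orthogonal invariance of the quaternion Gaussian distribution to reduce to a canonical form, or by importing the corresponding quaternion moment bound already proved in \cite{DBLP:journals/siamsc/LiuLJ22}. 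Once $\mathbb{E}\,\|\dot{\mathbf{\Omega}}^\dagger\|_2\leq\nu$ is in hand, the concavity of $\psi$, Jensen's inequality, and the monotone substitution of $\nu$ close the argument routinely.
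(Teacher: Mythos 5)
Your argument is correct in outline but takes a genuinely different route from the paper. The paper never invokes Jensen: it takes the tail bound $\mathbb{P}\{\|\dot{\mathbf{\Omega}}^\dagger\|_2\geq x\}\leq \bar{c}\,x^{-2(2N-2M+2)}$ from Theorem 7 of \cite{DBLP:journals/siamsc/LiuLJ22}, feeds it into the layer-cake identity $\mathbb{E}\,g(\|\dot{\mathbf{\Omega}}^\dagger\|_2)=\int_0^\infty g'(x)\,\mathbb{P}\{\|\dot{\mathbf{\Omega}}^\dagger\|_2\geq x\}\,dx$ with $g(x)=\sqrt{\alpha^2x^2/(1+\beta^2x^2)}$, splits the integral at a threshold $c$, and then solves an inequality to identify a $\nu$ for which the resulting sum is dominated by $g(\nu)$. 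Your reduction via concavity of $\psi(s)=\alpha s/\sqrt{1+\beta^2 s^2}$ (your derivative computations are right) plus monotonicity is valid and cleanly separates the analytic content from the probabilistic content; it also exposes that the proposition is exactly equivalent to its $\beta\to 0$ special case, namely the first-moment bound $\mathbb{E}\|\dot{\mathbf{\Omega}}^\dagger\|_2\leq\nu$. The one caveat is your assumption that this moment bound is directly quotable from \cite{DBLP:journals/siamsc/LiuLJ22}: the present paper extracts only the polynomial \emph{tail} bound from that reference, so to finish your argument you would still have to integrate that tail (choose a cutoff $c$, bound $\mathbb{E}\|\dot{\mathbf{\Omega}}^\dagger\|_2\leq c+\int_c^\infty \bar{c}\,x^{-2(2N-2M+2)}dx$, and optimize $c$) — essentially the same computation the paper performs, though for the identity function rather than for $g$, which is somewhat simpler. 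Your cautionary remark that the real counterpart of a quaternion Gaussian matrix is not an i.i.d.\ Gaussian ensemble is well placed and is precisely why the quaternion-specific tail bound is the unavoidable external input in either approach; granting that input, your proof closes correctly.
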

The proof is given in the Appendix.
\begin{theorem}
With the notation of Theorem \ref{th1.5}, for Algorithm \ref{a1.3}, we have 
\begin{equation}\label{theq1.6}
	\mathbb{E}\|\dot{\mathbf{A}}-\hat{\dot{\mathbf{A}}}_{CoR}\|_F\leq	\|\dot{\mathbf{A}}_0\|_F+(1+\gamma_K)\sqrt{K}\nu\sigma_{l-P+1}\gamma_K,
\end{equation}
where $\nu=(3(\sqrt{N}+\sqrt{N})+3)\frac{\mathit{e}\sqrt{4N+2}}{P+1}$ and $\gamma_K=\frac{\sigma_{l-P+1}}{\sigma_{K}}$.
\end{theorem}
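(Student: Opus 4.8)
The plan is to take expectations on both sides of the deterministic estimate \eqref{the1.5} in Theorem~\ref{th1.5} and to control the two random square-root terms separately with Propositions~\ref{p1} and~\ref{p2}. Writing the right-hand side of \eqref{the1.5} as $\|\dot{\mathbf{A}}_0\|_F + T_1 + T_2$, where $T_1$ and $T_2$ are the two square roots built from the common factor $\|\tilde{\dot{\mathbf{\Omega}}}_2\|_2\,\|\tilde{\dot{\mathbf{\Omega}}}_1^\dagger\|_2$, linearity of expectation together with the determinism of $\|\dot{\mathbf{A}}_0\|_F$ reduces the claim to showing $\mathbb{E}[T_1]\le\alpha\nu$ and $\mathbb{E}[T_2]\le\eta\nu$.

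First I would record the distributional facts that make the propositions applicable. Since $\dot{\mathbf{\Omega}}$ has independent $N(0,1)$ real components and is drawn independently of $\dot{\mathbf{A}}$, the rotational invariance of the quaternion Gaussian ensemble gives that $\tilde{\dot{\mathbf{\Omega}}}=\dot{\mathbf{V}}^H\dot{\mathbf{\Omega}}$ has the same distribution as $\dot{\mathbf{\Omega}}$; in particular its disjoint row blocks $\tilde{\dot{\mathbf{\Omega}}}_1\in\mathbb{H}^{(l-P)\times l}$ and $\tilde{\dot{\mathbf{\Omega}}}_2\in\mathbb{H}^{(N-l+P)\times l}$ are independent standard quaternion Gaussian matrices.

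The core step is an iterated-expectation argument exploiting this independence and the monotonicity of $g(s)=\sqrt{\alpha^2 s/(1+\beta^2 s)}$ in $s\ge 0$. Conditioning on $\tilde{\dot{\mathbf{\Omega}}}_1$ and setting $c=\|\tilde{\dot{\mathbf{\Omega}}}_1^\dagger\|_2$, the term $T_1$ has exactly the form treated in Proposition~\ref{p1} with $\alpha,\beta$ replaced by $\alpha c,\beta c$ and with Gaussian matrix $\tilde{\dot{\mathbf{\Omega}}}_2$, so $\mathbb{E}[T_1\mid\tilde{\dot{\mathbf{\Omega}}}_1]\le\sqrt{\alpha^2\nu_1^2 c^2/(1+\beta^2\nu_1^2 c^2)}$ with $\nu_1=3(\sqrt{N-l+P}+\sqrt{l})+3$. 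Taking the outer expectation over $\tilde{\dot{\mathbf{\Omega}}}_1$, the resulting expression is of the form treated in Proposition~\ref{p2} with $\alpha,\beta$ replaced by $\alpha\nu_1,\beta\nu_1$, yielding $\mathbb{E}[T_1]\le\sqrt{\alpha^2\nu^2/(1+\beta^2\nu^2)}\le\alpha\nu$, where $\nu=\nu_1\nu_2$ is the product of the spectral-norm constant $\nu_1$ and the pseudoinverse constant $\nu_2=\frac{e\sqrt{4l+2}}{P+1}$ of Proposition~\ref{p2}, matching (up to the dimensional labels) the $\nu$ in the statement. The identical two-stage argument applied to $T_2$ gives $\mathbb{E}[T_2]\le\eta\nu$. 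I would then substitute $\alpha=\sqrt{K}\frac{\sigma_{l-P+1}^2}{\sigma_K}\gamma_K^{2p}$ and $\eta=\frac{\sigma_K}{\sigma_{l-P+1}}\alpha$, use $\frac{\sigma_{l-P+1}^2}{\sigma_K}=\sigma_{l-P+1}\gamma_K$, and collect terms to obtain \eqref{theq1.6}.

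The main obstacle I anticipate is twofold. First, the rigorous justification of the conditioning step: one must verify that after freezing $\tilde{\dot{\mathbf{\Omega}}}_1$ the remaining randomness is exactly a standard quaternion Gaussian of the right shape so that Proposition~\ref{p1} applies verbatim, and that the monotonicity of $g$ legitimately lets the conditional bound be fed into Proposition~\ref{p2}. Second, keeping the two $\nu$-factors and the exponents of $\gamma_K$ consistent through both stages: the raw substitution produces a factor $\sqrt{K}\nu\sigma_{l-P+1}\gamma_K^{2p}(1+\gamma_K)$, and collapsing the power-scheme factor $\gamma_K^{2p}$ so as to land on the single power of $\gamma_K$ appearing in \eqref{theq1.6} relies on $0<\gamma_K\le 1$ (valid since $\sigma_{l-P+1}\le\sigma_K$ as $l-P+1>K$), so the bookkeeping of these exponents is where the care is needed.
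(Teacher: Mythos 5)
Your proposal follows essentially the same route as the paper: take expectations of the bound in Theorem \ref{th1.5}, exploit the independence of the disjoint blocks $\tilde{\dot{\mathbf{\Omega}}}_1$ and $\tilde{\dot{\mathbf{\Omega}}}_2$ to iterate the expectation (first over $\tilde{\dot{\mathbf{\Omega}}}_2$ via Proposition \ref{p1}, then over $\tilde{\dot{\mathbf{\Omega}}}_1$ via Proposition \ref{p2}), bound $\sqrt{\alpha^2\nu^2/(1+\beta^2\nu^2)}\le\alpha\nu$, and collect $\alpha+\eta=(1+\gamma_K)\sqrt{K}\sigma_{l-P+1}$ times the product of the two constants. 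The exponent issue you flag (the raw substitution yielding $\gamma_K^{2p}$ rather than the single $\gamma_K$ in \eqref{theq1.6}) is a genuine bookkeeping discrepancy that is present in the paper's own proof as well, so your treatment is, if anything, more careful than the original.
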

\begin{proof}
By utilizing Lemma 2 in \cite{DBLP:journals/siamsc/LiuLJ22}, $\tilde{\dot{\mathbf{\Omega}}}=\dot{\mathbf{V}}^H\dot{\mathbf{\Omega}}=[\tilde{\dot{\mathbf{\Omega}}}_1^H,\;\tilde{\dot{\mathbf{\Omega}}}_2 ^H]^H$ follows $N(0,4I_N)$, where $\tilde{\dot{\mathbf{\Omega}}}_1\in\mathbb{H}^{l-P\times l}$ and $\tilde{\dot{\mathbf{\Omega}}}_2\in\mathbb{H}^{N-l+P\times l}$ are disjoint submatrices. So we first take expectations over $\tilde{\dot{\mathbf{\Omega}}}_2$ and next over $\tilde{\dot{\mathbf{\Omega}}}_1$. By invoking Theorem \ref{th1.5}, Proposition \ref{p1}, and Proposition \ref{p2}, we have
\begin{equation}
	\begin{split}
		&\mathbb{E}\|\dot{\mathbf{A}}-\hat{\dot{\mathbf{A}}}_{CoR}\|_F\\&=\|\dot{\mathbf{A}}_0\|_F+\mathbb{E}_{\tilde{\dot{\mathbf{\Omega}}}_1}\mathbb{E}_{\tilde{\dot{\mathbf{\Omega}}}_2}\left(\sqrt{\frac{\alpha^2\|\tilde{\dot{\mathbf{\Omega}}}_2\|_2^2\|\tilde{\dot{\mathbf{\Omega}}}_1^\dagger\|_2^2}{1+\beta^2\|\tilde{\dot{\mathbf{\Omega}}}_2\|_2^2\|\tilde{\dot{\mathbf{\Omega}}}_1^\dagger\|_2^2}}+\sqrt{\frac{\eta^2\|\tilde{\dot{\mathbf{\Omega}}}_2\|_2^2\|\tilde{\dot{\mathbf{\Omega}}}_1^\dagger\|_2^2}{1+\tau^2\|\tilde{\dot{\mathbf{\Omega}}}_2\|_2^2\|\tilde{\dot{\mathbf{\Omega}}}_1^\dagger\|_2^2}}\right)\\&\leq	\|\dot{\mathbf{A}}_0\|_F+\mathbb{E}_{\tilde{\dot{\mathbf{\Omega}}}_1}\left(\sqrt{\frac{\alpha^2\nu_1^2\|\tilde{\dot{\mathbf{\Omega}}}_1^\dagger\|_2^2}{1+\beta^2\nu_1^2\|\tilde{\dot{\mathbf{\Omega}}}_1^\dagger\|_2^2}}+\sqrt{\frac{\eta^2\nu_1^2\|\tilde{\dot{\mathbf{\Omega}}}_1^\dagger\|_2^2}{1+\tau^2\nu_1^2\|\tilde{\dot{\mathbf{\Omega}}}_1^\dagger\|_2^2}}\right)\\&\leq	\|\dot{\mathbf{A}}_0\|_F+\sqrt{\frac{\alpha^2\nu_1^2\nu_2^2}{1+\beta^2\nu_1^2\nu_2^2}}+\sqrt{\frac{\eta^2\nu_1^2\nu_2^2}{1+\tau^2\nu_1^2\nu_2^2}}\\&\leq	\|\dot{\mathbf{A}}_0\|_F+(\alpha+\eta)\nu_1\nu_2,
	\end{split}
\end{equation}
where $\alpha+\eta=(1+\frac{1}{\gamma_K})\alpha=(1+\frac{1}{\gamma_K})\sqrt{K}\gamma_K\sigma_{l-P+1}=(1+\gamma_K)\sqrt{K}\sigma_{l-P+1}$.
So we have $\mathbb{E}\|\dot{\mathbf{A}}-\hat{\dot{\mathbf{A}}}_{CoR}\|_F\leq\|\dot{\mathbf{A}}_0\|_F+(1+\gamma_K)\sqrt{K}\sigma_{l-P+1}\nu_1\nu_2$, where $\nu_1=3(\sqrt{M}+\sqrt{N})+3$ and $\nu_2 =\frac{\mathit{e}\sqrt{4N+2}}{p+1}$.
\end{proof}

Similarly, we give the theoretical analysis for quaternion tensor cases and take third-order quaternion tensor as an example. The linearly invertible transformation we adopted is QDCT, and the corresponding invertible transformation is Inverse Quaternion Discrete Cosine Transform (IQDCT). The QDCT process and IQDCT process is denoted as $\mathcal{D}(\cdot)$ and $\mathcal{ID}(\cdot)$, respectively. Firstly, basing on Algorithm \ref{a1.4}, supposing that  quaternion tensor data $\dot{\mathcal{A}}\in\mathbb{H}^{{I_1\times I_2\times I_3}}$, let $\hat{\dot{\mathcal{A}}}=\mathcal{D}(\dot{\mathcal{A}})$,  $\dot{\mathbf{Q}}_1^k\in\mathbb{H}^{I_1\times l}$, and $ \dot{\mathbf{Q}}_2^k\in\mathbb{H}^{I_2\times l}$ $(k=1\cdots I_3)$. Let $\hat{\dot{\mathcal{Q}}}_1(:,:,k)=\dot{\mathbf{Q}}_1^k$ and $\hat{\dot{\mathcal{Q}}}_2(:,:,k)=\dot{\mathbf{Q}}_2^k$. Then we have the following theorem.
\begin{theorem}\label{th1.7}
Let $\hat{\dot{\mathcal{Q}}}_1\in\mathbb{H}^{I_1\times l\times I_3}$, $\hat{\dot{\mathcal{Q}}}_2\in\mathbb{H}^{I_2\times l \times I_3}$ be quaternion matrices constructed by Algorithm \ref{a1.4}. Let $\hat{\dot{\mathcal{M}}}_K(:,:,k)$ be the best rank$-K$ approximation of $\hat{\dot{\mathcal{Q}}}_1(:,:,k)^H\hat{\dot{\mathcal{A}}}(:,:,k)\hat{\dot{\mathbf{Q}}}_2(:,:,k)\in\mathbb{H}^{l\times l}$. Let $\dot{\mathcal{Q}}_1=\mathcal{ID}(\hat{\dot{\mathcal{Q}}}_1)$, $\dot{\mathcal{Q}}_2=\mathcal{ID}(\hat{\dot{\mathcal{Q}}}_2)$, and $\dot{\mathcal{M}}_K=\mathcal{ID}(\hat{\dot{\mathcal{M}}}_K)$. Then, $\dot{\mathcal{M}}_K$ is an optimal solution of the following optimization problem:
\begin{equation}\label{the2.1}
	\min\limits_{\dot{\mathcal{M}}, \text{rank}(\dot{\mathcal{M}})\leq k}\|\dot{\mathcal{A}}-\dot{\mathcal{Q}}_1\star_{QT}\dot{\mathcal{M}}\star_{QT}\dot{\mathcal{Q}}_2^H\|_F=\|\dot{\mathcal{A}}-\dot{\mathcal{Q}}_1\star_{QT}\dot{\mathcal{M}}_K\star_{QT}\dot{\mathcal{Q}}_2^H\|_F,
\end{equation}
besides,  the rank of each $\dot{\mathbf{M}}(:,:,k)$ is small than or equal to $K$. Moreover, \begin{equation}\label{the2.2}
	\begin{split} \|\dot{\mathcal{A}}-\dot{\mathcal{Q}}_1\star_{QT}\dot{\mathcal{M}}_K\star_{QT}\dot{\mathcal{Q}}_2^H\|_F\leq\|&\dot{\mathcal{A}}-\dot{\mathcal{A}}_K\|_F+\|\dot{\mathcal{A}}_K-\dot{\mathcal{Q}}_1\star_{QT}\dot{\mathcal{Q}}_1^H\star_{QT}\dot{\mathcal{A}}_K\|_F\\&+\|\dot{\mathcal{A}}_K-\dot{\mathcal{A}}_K\star_{QT}\dot{\mathcal{Q}}_2\star_{QT}\dot{\mathcal{Q}}_2^H\|_F.
	\end{split}
\end{equation}
\end{theorem}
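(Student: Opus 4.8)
The plan is to reduce the tensor statement to the matrix case already settled in \eqref{the1.1}--\eqref{the1.2} by passing to the QDCT domain, where, by Definition \ref{def4}, the $\star_{QT}$-product acts frontal-slice by frontal-slice. First I would exploit the orthonormality of the QDCT: since $\mathcal{D}(\cdot)$ is realized by a real orthonormal transform along the third mode, a Parseval-type identity gives $\|\dot{\mathcal{B}}\|_F^2=\|\mathcal{D}(\dot{\mathcal{B}})\|_F^2=\sum_{k=1}^{I_3}\|\hat{\dot{\mathcal{B}}}(:,:,k)\|_F^2$ for every quaternion tensor $\dot{\mathcal{B}}$. Using that the $k$th transformed frontal slice of $\dot{\mathcal{Q}}_1\star_{QT}\dot{\mathcal{M}}\star_{QT}\dot{\mathcal{Q}}_2^H$ equals $\hat{\dot{\mathcal{Q}}}_1(:,:,k)\hat{\dot{\mathcal{M}}}(:,:,k)\hat{\dot{\mathcal{Q}}}_2(:,:,k)^H$, I would rewrite the objective as
\[
\|\dot{\mathcal{A}}-\dot{\mathcal{Q}}_1\star_{QT}\dot{\mathcal{M}}\star_{QT}\dot{\mathcal{Q}}_2^H\|_F^2=\sum_{k=1}^{I_3}\|\hat{\dot{\mathcal{A}}}(:,:,k)-\hat{\dot{\mathcal{Q}}}_1(:,:,k)\hat{\dot{\mathcal{M}}}(:,:,k)\hat{\dot{\mathcal{Q}}}_2(:,:,k)^H\|_F^2 .
\]

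Next, for the optimality claim \eqref{the2.1}, I would observe that the rank constraint $\text{rank}(\dot{\mathcal{M}})\le K$ is, as the theorem statement itself records, equivalent to $\text{rank}(\hat{\dot{\mathcal{M}}}(:,:,k))\le K$ for every $k$, so the sum above decouples into $I_3$ independent matrix minimizations. Each summand is precisely the matrix problem treated in the proof of the matrix counterpart above: applying the Parseval split together with the Eckart--Young optimality of the truncated QSVD \cite{eckart1936approximation} slice-wise shows the per-slice minimizer is the best rank-$K$ approximation $\hat{\dot{\mathcal{M}}}_K(:,:,k)$ of $\hat{\dot{\mathcal{Q}}}_1(:,:,k)^H\hat{\dot{\mathcal{A}}}(:,:,k)\hat{\dot{\mathcal{Q}}}_2(:,:,k)$. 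Transforming back via $\mathcal{ID}(\cdot)$ reassembles these into $\dot{\mathcal{M}}_K$, which yields \eqref{the2.1} and the stated per-slice rank bound.

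For the error bound \eqref{the2.2}, I would apply the matrix inequality \eqref{the1.2} to each transformed frontal slice $k$, bounding $\|\hat{\dot{\mathcal{A}}}(:,:,k)-\hat{\dot{\mathcal{Q}}}_1(:,:,k)\hat{\dot{\mathcal{M}}}_K(:,:,k)\hat{\dot{\mathcal{Q}}}_2(:,:,k)^H\|_F$ by a sum of three per-slice Frobenius norms; the vanishing cross term needed at each slice is exactly the real-part-of-trace computation already carried out in \eqref{the1.4}, which transfers verbatim. Squaring, summing over $k$, and invoking the triangle inequality for the $\ell^2$ norm of nonnegative sequences (Minkowski), namely $\sqrt{\sum_k(a_k+b_k+c_k)^2}\le\sqrt{\sum_k a_k^2}+\sqrt{\sum_k b_k^2}+\sqrt{\sum_k c_k^2}$, turns the three summed slice norms into three aggregate sums; translating each back through Parseval and the $\star_{QT}$ identities (for instance $\sum_k\|\hat{\dot{\mathcal{A}}}_K(:,:,k)-\hat{\dot{\mathcal{Q}}}_1(:,:,k)\hat{\dot{\mathcal{Q}}}_1(:,:,k)^H\hat{\dot{\mathcal{A}}}_K(:,:,k)\|_F^2=\|\dot{\mathcal{A}}_K-\dot{\mathcal{Q}}_1\star_{QT}\dot{\mathcal{Q}}_1^H\star_{QT}\dot{\mathcal{A}}_K\|_F^2$) recovers the right-hand side of \eqref{the2.2}.

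The step I expect to be the main obstacle, and would treat most carefully, is justifying the exact Parseval identity together with the slice-wise decoupling of the rank constraint: one must confirm that the chosen QDCT is genuinely norm-preserving (orthonormal, so that no stray scaling constant survives between $\|\cdot\|_F$ and $\sum_k\|\cdot(:,:,k)\|_F$), and that minimizing over tensors with $\text{rank}(\dot{\mathcal{M}})\le K$ is truly equivalent to minimizing each transformed slice independently over matrices of rank $\le K$, so that the per-slice optimizers are mutually compatible and do not interact through the transform. Once these two points are secured, the remainder is a faithful slice-wise repetition of the matrix argument followed by the Minkowski reassembly.
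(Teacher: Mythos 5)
Your proposal is correct and follows exactly the route the paper intends: the paper itself omits this proof, stating only that it is analogous to the matrix case (and to the real-tensor results of Che--Wei), and the subsequent lemma's proof in the paper uses precisely your key ingredient — the unitary (Parseval) property of the QDCT to rewrite the tensor Frobenius norm as a sum of transformed frontal-slice norms, reducing everything to the slice-wise matrix argument. Your added care about the exact norm preservation, the equivalence of the rank constraint with the per-slice rank bound (which holds since the sorted per-slice singular values make the tube norms decreasing), and the Minkowski reassembly of the three error terms supplies the details the paper leaves out.
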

\begin{lemma}
Let $\dot{\mathcal{A}}_{CoR}= \dot{\mathcal{U}}\star_{QT}\dot{\mathcal{T}}\star_{QT}\dot{\mathcal{V}}^H$ as obtained in Algorithm \ref{a1.4}. Then we have $\|\dot{\mathcal{A}}-\dot{\mathcal{A}}_{CoR}\|_F\leq\|\dot{\mathcal{A}}-\dot{\mathcal{Q}}_1\star_{QT}\dot{\mathcal{M}}_K\star_{QT}\dot{\mathcal{Q}}_2^H\|_F$.
\end{lemma}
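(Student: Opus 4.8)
The plan is to reduce the tensor inequality to a family of ordinary matrix inequalities, one per frontal slice, by passing to the transform domain and exploiting the facewise nature of the $\star_{QT}$-product. First I would invoke the Parseval theorem in the quaternion domain together with the (scaled) orthogonality of the QDCT to write, for any quaternion tensor $\dot{\mathcal{X}}\in\mathbb{H}^{I_1\times I_2\times I_3}$, an identity of the form $\|\dot{\mathcal{X}}\|_F^2=c\sum_{k=1}^{I_3}\|\hat{\dot{\mathcal{X}}}(:,:,k)\|_F^2$ with $\hat{\dot{\mathcal{X}}}=\mathcal{D}(\dot{\mathcal{X}})$ and $c>0$ a fixed constant. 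Since $c$ multiplies both sides of the desired inequality it cancels, so it suffices to establish the estimate slice-by-slice in the transform domain. By Definition \ref{def4} the $\star_{QT}$-product acts facewise after the transform, hence both $\dot{\mathcal{A}}_{CoR}$ and the competitor $\dot{\mathcal{Q}}_1\star_{QT}\dot{\mathcal{M}}_K\star_{QT}\dot{\mathcal{Q}}_2^H$ decouple into independent $k$-slice problems; using $\dot{\mathcal{Q}}_1=\mathcal{ID}(\hat{\dot{\mathcal{Q}}}_1)$, $\dot{\mathcal{M}}_K=\mathcal{ID}(\hat{\dot{\mathcal{M}}}_K)$, $\dot{\mathcal{Q}}_2=\mathcal{ID}(\hat{\dot{\mathcal{Q}}}_2)$ from Theorem \ref{th1.7}, the $k$th transform-domain slice of the competitor is exactly $\dot{\mathbf{Q}}_1^k\hat{\dot{\mathcal{M}}}_K(:,:,k)(\dot{\mathbf{Q}}_2^k)^H$.

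Next I would identify the $k$th transform-domain slice of $\dot{\mathcal{A}}_{CoR}$ from the construction in Algorithm \ref{a1.4}. There Steps 9--11 give $\widetilde{\dot{\mathcal{U}}}(:,:,k)=\dot{\mathbf{Q}}_1^k\dot{\mathbf{Q}}_3^k$, $\widetilde{\dot{\mathcal{R}}}(:,:,k)=\dot{\mathbf{R}}_3^k$, and $\widetilde{\dot{\mathcal{V}}}(:,:,k)=\dot{\mathbf{Q}}_2^k\mathbf{P}^k$, where $\dot{\mathbf{Q}}_3^k\dot{\mathbf{R}}_3^k(\mathbf{P}^k)^T=\dot{\mathbf{M}}^k:=(\dot{\mathbf{Q}}_1^k)^H\hat{\dot{\mathcal{A}}}(:,:,k)\dot{\mathbf{Q}}_2^k$ is the QQRCP of $\dot{\mathbf{M}}^k$. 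Because $\mathbf{P}^k$ is a permutation matrix, so $(\mathbf{P}^k)^H=(\mathbf{P}^k)^T$, and $\dot{\mathbf{Q}}_1^k,\dot{\mathbf{Q}}_2^k$ have orthonormal columns (they arise from QQR), multiplying out the facewise product yields $\hat{\dot{\mathcal{A}}}_{CoR}(:,:,k)=\dot{\mathbf{Q}}_1^k\dot{\mathbf{M}}^k(\dot{\mathbf{Q}}_2^k)^H=\dot{\mathbf{Q}}_1^k(\dot{\mathbf{Q}}_1^k)^H\hat{\dot{\mathcal{A}}}(:,:,k)\dot{\mathbf{Q}}_2^k(\dot{\mathbf{Q}}_2^k)^H$. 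In other words, the reconstructed slice employs the \emph{unconstrained} middle factor $\dot{\mathbf{M}}^k$, whereas the competing slice employs the rank-$K$ constrained factor $\hat{\dot{\mathcal{M}}}_K(:,:,k)$.

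Finally I would apply, slice-wise, the matrix optimality principle behind the first theorem of this section. Its orthogonal-splitting identity, here read off for slice $k$ as $\|\hat{\dot{\mathcal{A}}}(:,:,k)-\dot{\mathbf{Q}}_1^k\dot{\mathbf{M}}(\dot{\mathbf{Q}}_2^k)^H\|_F^2=\|\hat{\dot{\mathcal{A}}}(:,:,k)-\dot{\mathbf{Q}}_1^k(\dot{\mathbf{Q}}_1^k)^H\hat{\dot{\mathcal{A}}}(:,:,k)\dot{\mathbf{Q}}_2^k(\dot{\mathbf{Q}}_2^k)^H\|_F^2+\|\dot{\mathbf{M}}^k-\dot{\mathbf{M}}\|_F^2$, exhibits $\dot{\mathbf{M}}=\dot{\mathbf{M}}^k$ as the global minimizer over all admissible middle factors; in particular it is no worse than the choice $\dot{\mathbf{M}}=\hat{\dot{\mathcal{M}}}_K(:,:,k)$, giving $\|\hat{\dot{\mathcal{A}}}(:,:,k)-\hat{\dot{\mathcal{A}}}_{CoR}(:,:,k)\|_F\leq\|\hat{\dot{\mathcal{A}}}(:,:,k)-\dot{\mathbf{Q}}_1^k\hat{\dot{\mathcal{M}}}_K(:,:,k)(\dot{\mathbf{Q}}_2^k)^H\|_F$ for every $k$. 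Squaring, summing over $k$, restoring the constant $c$ via Parseval, and taking square roots then yields $\|\dot{\mathcal{A}}-\dot{\mathcal{A}}_{CoR}\|_F\leq\|\dot{\mathcal{A}}-\dot{\mathcal{Q}}_1\star_{QT}\dot{\mathcal{M}}_K\star_{QT}\dot{\mathcal{Q}}_2^H\|_F$. I expect the main obstacle to be purely structural bookkeeping rather than analysis: carefully confirming that the QDCT makes the Frobenius norm separate cleanly across frontal slices so that the per-slice inequalities aggregate with no cross terms, and that the $\star_{QT}$-product genuinely collapses $\dot{\mathcal{Q}}_1\star_{QT}\dot{\mathcal{M}}_K\star_{QT}\dot{\mathcal{Q}}_2^H$ to the facewise products $\dot{\mathbf{Q}}_1^k\hat{\dot{\mathcal{M}}}_K(:,:,k)(\dot{\mathbf{Q}}_2^k)^H$ on which the matrix optimality is invoked.
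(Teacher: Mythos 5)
Your proposal is correct and follows essentially the same route as the paper's proof: pass to the QDCT transform domain so the Frobenius norm decouples over frontal slices, identify the $k$th slice of $\dot{\mathcal{A}}_{CoR}$ as $\dot{\mathbf{Q}}_1^k\dot{\mathbf{M}}^k(\dot{\mathbf{Q}}_2^k)^H$ with the unconstrained middle factor $\dot{\mathbf{M}}^k=(\dot{\mathbf{Q}}_1^k)^H\hat{\dot{\mathcal{A}}}(:,:,k)\dot{\mathbf{Q}}_2^k$, and invoke the slice-wise optimality of that factor over the rank-$K$ constrained choice $\hat{\dot{\mathcal{M}}}_K(:,:,k)$ before summing over $k$. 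Your write-up merely makes explicit two steps the paper leaves implicit (multiplying out the QQRCP factors to verify the form of the reconstructed slice, and citing the orthogonal-splitting identity for the per-slice optimality), which is a welcome elaboration rather than a different argument.
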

\begin{proof}
Let $\hat{\dot{\mathcal{A}}}_{CoR}=\mathcal{D}(\dot{\mathcal{A}})$. Basing on Algorithm \ref{a1.4}, the $k$th frontal slice of $\hat{\dot{\mathcal{A}}}_{CoR}(:,:,k)=(\dot{\mathbf{Q}}_1^{k})^{H}\hat{\dot{\mathcal{A}}}(:, :, k)\mathbf{Q}_2^k$. Utilizing the definition $\star_{QT}$ in \cite{DBLP:journals/sigpro/MiaoK23} and the unitary property in QDCT, then we have
\begin{equation*}
\|\dot{\mathcal{A}}-\dot{\mathcal{A}}_{CoR}\|_F^2=\sum\limits_{k=1}\limits^{I_3}\|\hat{\dot{\mathcal{A}}}(:,:,k)-\dot{\mathcal{Q}}_1^k\dot{\mathbf{M}}^k(\dot{\mathcal{Q}}_2^{k})^{H}\|_F^2.
\end{equation*}  Denoting $\dot{\mathbf{M}}^k_K$ as the best rank-$K$ approximation of $\dot{\mathbf{M}}^k$, then
\begin{equation*}
 \|\hat{\dot{\mathcal{A}}}(:,:,k)-\dot{\mathcal{Q}}_1^k\dot{\mathbf{M}}^k(\dot{\mathcal{Q}}_2^{k})^{H}\|_F\leq\|\hat{\dot{\mathcal{A}}}(:,:,k)-\dot{\mathcal{Q}}_1^k\dot{\mathbf{M}}^k_K(\dot{\mathcal{Q}}_2^{k})^{H}\|_F, 
\end{equation*} 
 which completes the proof.
\end{proof}

Let $\dot{\mathcal{A}}\in\mathbb{H}^{I_1\times I_1 \times I_3}$ and $\hat{\dot{\mathcal{A}}}=\mathcal{D}(\dot{\mathcal{A}})$. For every frontal slice of $\hat{\dot{\mathcal{A}}}$, let
\begin{equation*}
\hat{\dot{\mathcal{A}}}(:,:,k)=[\dot{\mathbf{U}}_K^k,\; \dot{\mathbf{U}}_0^k]\left[\begin{array}{cc}
\mathbf{\Sigma}_K^k& \mathbf{0}  \\
\mathbf{0} & \mathbf{\Sigma}_K^k\\
\end{array}\right][\dot{\mathbf{V}}_K^k,\; \dot{\mathbf{V}}_0^k]^H
\end{equation*}
is the QSVD of $\hat{\dot{\mathcal{A}}}(:,:,k)$, where  $\hat{\dot{\mathcal{U}}}_K(:,:,k)=\dot{\mathbf{U}}_K^k\in\mathbb{H}^{I_1\times K}$,  $\hat{\dot{\mathcal{U}}}_0(:,:,k)=\dot{\mathbf{U}}_0^k\in\mathbb{H}^{I_1\times I_1-K}$ and  $\hat{\dot{\mathcal{V}}}_K(:,:,k)=\dot{\mathbf{V}}_K^k\in\mathbb{H}^{I_2\times K}$,  $\hat{\dot{\mathcal{V}}}_0(:,:,k)=\dot{\mathbf{V}}_0^k\in\mathbb{H}^{I_2\times I_2-K}$ are column orthogonal, and $\hat{\dot{\mathcal{S}}}_K(:,:,k)=\mathbf{\Sigma}_K^k\in\mathbb{R}^{K\times K}$, $ \hat{\dot{\mathcal{S}}}_0(:,:,k)=\mathbf{\Sigma}_0^k\in\mathbb{R}^{I_1-K\times I_2-K}$ are real diagonal matrices, and $\sigma_{i}^k$ $(i=1,\ldots, K)$ are singular values. Let $P$ be an integer satisfies $2\leq P+K\leq l$, we define
\begin{equation*}
\tilde{\dot{\Omega}}^k=(\dot{\mathbf{V}}^{k})^{H}\dot{\mathbf{\Omega}}=[(\tilde{\dot{\mathbf{\Omega}}}_1^{k})^{H},\;(\tilde{\dot{\mathbf{\Omega}}}_2 ^{k})^{H}]^H\in\mathbb{H}^{I_2\times l}, 
\end{equation*}
 where $\tilde{\dot{\Omega}}_1^k\in\mathbb{H}^{l-P\times l}$ and $\tilde{\dot{\Omega}}_2^k\in\mathbb{H}^{(I_2-l+P)\times l}$. Basing on the Theorem \ref{th1.5} for quaternion matrix cases, the upper bound of $\|\dot{\mathcal{A}}-\hat{\dot{\mathcal{A}}}_{CoR}\|_F$ is also hold, and can be summarized in the following theorem.

\begin{theorem}\label{th1.8}
Assuming that  the quaternion tensor $\dot{\mathcal{A}}$ has a TQt-SVD as defined in Theorem \ref{th2}, and the the corresponding QSVD in the transformed domain are summarized in the above with $2\leq P+K\leq l$. $\dot{\mathcal{A}}_{CoR}$ is quaternion tensor constructed by Algorithm \ref{a1.4}. Assuming $\tilde{\dot{\mathbf{\Omega}}}_1^k$ is full row rank. Then 
\begin{equation}\label{the1.7}
	\|\dot{\mathcal{A}}-\dot{\mathcal{A}}_{CoR}\|_F\leq	\|\dot{\mathcal{A}}_0\|_F+\sum_{k=1}^{I_3}\sqrt{\frac{\alpha_k^2\|\tilde{\dot{\mathbf{\Omega}}}_2^k\|_2^2\|\tilde{\dot{\mathbf{\Omega}}}_1^{k\dagger}\|_2^2}{1+\beta_k^2\|\tilde{\dot{\mathbf{\Omega}}}_2^k\|_2^2\|\tilde{\dot{\mathbf{\Omega}}}_1^{k\dagger}\|_2^2}}+\sqrt{\frac{\eta_k^2\|\tilde{\dot{\mathbf{\Omega}}}_2^k\|_2^2\|\tilde{\dot{\mathbf{\Omega}}}_1^{k\dagger}\|_2^2}{1+\tau_k^2\|\tilde{\dot\mathbf{\mathbf{\Omega}}}}^k_2\|_2^2\|\tilde{\dot{\mathbf{\Omega}}}_1^{k\dagger}\|_2^2}},
\end{equation}
where $\alpha_k=\sqrt{K}\frac{\sigma_{l-P+1}^{k2}}{\sigma_{K}^k}(\frac{\sigma_{l-P+1}^k}{\sigma_{K}^k})^{2p}$, $\beta_k=\frac{\sigma_{l-P+1}^{k2}}{\sigma_{1}^k\sigma_{K}^k}(\frac{\sigma_{l-P+1}^k}{\sigma_{K}^k})^{2p}$, $\eta=\frac{\sigma_{K}}{\sigma_{l-P+1}^k}\alpha_k$, $\tau_k=\frac{1}{\sigma_{l-P+1}^k}\beta_k$.
\end{theorem}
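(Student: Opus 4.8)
The plan is to reduce the tensor estimate to a slicewise application of the matrix bound in Theorem \ref{th1.5}. The key structural fact, already recorded in the proof of the preceding Lemma, is that QDCT is unitary, so by the quaternion Parseval theorem the Frobenius error splits across the frontal slices of the transform domain,
\begin{equation*}
\|\dot{\mathcal{A}}-\dot{\mathcal{A}}_{CoR}\|_F^2=\sum_{k=1}^{I_3}\|\hat{\dot{\mathcal{A}}}(:,:,k)-\dot{\mathbf{Q}}_1^k\dot{\mathbf{M}}^k(\dot{\mathbf{Q}}_2^k)^H\|_F^2.
\end{equation*}
Invoking the Lemma slicewise, each summand is bounded by the corresponding best rank-$K$ restricted error $\|\hat{\dot{\mathcal{A}}}(:,:,k)-\dot{\mathbf{Q}}_1^k\dot{\mathbf{M}}_K^k(\dot{\mathbf{Q}}_2^k)^H\|_F$, so it suffices to control each of these individually.

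First I would observe that, for a fixed $k$, the matrices $\dot{\mathbf{Q}}_1^k$ and $\dot{\mathbf{Q}}_2^k$ produced in lines 5--11 of Algorithm \ref{a1.4} are exactly those Algorithm \ref{a1.3} would generate if applied to the quaternion matrix $\hat{\dot{\mathcal{A}}}(:,:,k)$ with the same test matrix $\dot{\mathbf{\Omega}}$ and power parameter $p$; the per-slice power scheme in lines 6--8 mirrors the matrix power iteration verbatim. Consequently Theorem \ref{th1.5} applies directly to $\hat{\dot{\mathcal{A}}}(:,:,k)$, whose QSVD and induced projection $\tilde{\dot{\mathbf{\Omega}}}^k=(\dot{\mathbf{V}}^k)^H\dot{\mathbf{\Omega}}$ have been set up above. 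Writing $S_k$ for the two square-root terms assembled from the slice singular values $\sigma_i^k$ through $\alpha_k,\beta_k,\eta_k,\tau_k$, Theorem \ref{th1.5} yields, under the hypothesis that $\tilde{\dot{\mathbf{\Omega}}}_1^k$ has full row rank,
\begin{equation*}
\|\hat{\dot{\mathcal{A}}}(:,:,k)-\dot{\mathbf{Q}}_1^k\dot{\mathbf{M}}_K^k(\dot{\mathbf{Q}}_2^k)^H\|_F\leq\|\dot{\mathbf{A}}_0^k\|_F+S_k,
\end{equation*}
where $\dot{\mathbf{A}}_0^k=\dot{\mathbf{U}}_0^k\mathbf{\Sigma}_0^k(\dot{\mathbf{V}}_0^k)^H$ is the discarded tail of slice $k$.

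It then remains to reassemble the slice bounds. Substituting into the Parseval identity and applying the Minkowski inequality in $\mathbb{R}^{I_3}$ gives
\begin{equation*}
\|\dot{\mathcal{A}}-\dot{\mathcal{A}}_{CoR}\|_F\leq\sqrt{\sum_{k=1}^{I_3}\bigl(\|\dot{\mathbf{A}}_0^k\|_F+S_k\bigr)^2}\leq\sqrt{\sum_{k=1}^{I_3}\|\dot{\mathbf{A}}_0^k\|_F^2}+\sqrt{\sum_{k=1}^{I_3}S_k^2}.
\end{equation*}
The first root equals $\|\dot{\mathcal{A}}_0\|_F$ by Parseval again (as $\dot{\mathcal{A}}_0$ has transform-domain slices $\dot{\mathbf{A}}_0^k$), and bounding the second root by the corresponding $\ell_1$ sum, $\sqrt{\sum_k S_k^2}\leq\sum_k S_k$, reproduces exactly the claimed right-hand side of \eqref{the1.7}.

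The steps needing the most care are the two aggregation inequalities at the end: the passage from the $\ell_2$ norm over slices to the $\ell_1$ sum is where the estimate becomes genuinely loose, and one must apply it only to the nonnegative $S_k$ while keeping the $\|\dot{\mathcal{A}}_0\|_F$ term in its tighter $\ell_2$ form. The remaining subtlety is verifying, for all $k$ simultaneously, that the single shared random matrix $\dot{\mathbf{\Omega}}$ induces a slicewise projection $\tilde{\dot{\mathbf{\Omega}}}_1^k$ of full row rank, so that Theorem \ref{th1.5} is legitimately invoked on every one of the $I_3$ slices; this is precisely the hypothesis imposed in the statement.
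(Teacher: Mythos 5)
Your proof is correct and follows essentially the route the paper intends: the paper itself omits the argument, saying only that it parallels the quaternion matrix case (Theorem \ref{th1.5}) and the real-tensor analogues in \cite{DBLP:journals/jscic/CheW22}, and your slicewise Parseval reduction to the per-slice matrix bound is exactly that parallel. The only material you add beyond the paper's sketch is the explicit aggregation step (Minkowski over slices plus $\sqrt{\sum_k S_k^2}\leq\sum_k S_k$), which correctly reproduces the stated right-hand side.
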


\begin{theorem}\label{th1.9}
With the notation of Theorem \ref{th1.7}, for Algorithm \ref{a1.4} we have 
\begin{equation}\label{the1.6}
\mathbb{E}\|\dot{\mathcal{A}}-\dot{\mathcal{A}}_{CoR}\|_F\leq	\|\dot{\mathcal{A}}_0\|_F+\frac{\sqrt{K}\nu}{I_3}\sum_{k=1}^{I_3}(1+\gamma_K^k)\sigma_{l-P+1}^k\gamma_K^k,
\end{equation}
where $\nu=(3(\sqrt{N}+\sqrt{N})+3)\frac{\mathit{e}\sqrt{4N+2}}{P+1}$ and $\gamma_K=\frac{\sigma_{l-P+1}^k}{\sigma_{K}^k}$.
\end{theorem}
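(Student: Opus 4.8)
The plan is to reduce the tensor average-case bound to $I_3$ copies of the matrix average-case estimate already established for Algorithm \ref{a1.3}, and then reassemble the per-slice contributions. I would begin from the deterministic bound of Theorem \ref{th1.8}, which already writes $\|\dot{\mathcal{A}}-\dot{\mathcal{A}}_{CoR}\|_F$ as $\|\dot{\mathcal{A}}_0\|_F$ plus a finite sum over the frontal slices $k=1,\dots,I_3$ of two square-root terms built from $\alpha_k,\beta_k,\eta_k,\tau_k$ together with the spectral norms $\|\tilde{\dot{\mathbf{\Omega}}}_2^k\|_2$ and $\|\tilde{\dot{\mathbf{\Omega}}}_1^{k\dagger}\|_2$. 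Taking expectations on both sides and invoking linearity of expectation, the deterministic term $\|\dot{\mathcal{A}}_0\|_F$ passes through unchanged and the expectation commutes with the finite sum, so it suffices to bound $\mathbb{E}$ of each summand in isolation. I would also record that $\tilde{\dot{\mathbf{\Omega}}}_1^k$ has full row rank almost surely, which legitimizes applying Theorem \ref{th1.8} slicewise.

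For a fixed slice $k$, the argument copies the matrix-case theorem. By Lemma 2 of \cite{DBLP:journals/siamsc/LiuLJ22}, since $\dot{\mathbf{V}}^k$ is unitary the rotated test matrix $\tilde{\dot{\mathbf{\Omega}}}^k=(\dot{\mathbf{V}}^k)^H\dot{\mathbf{\Omega}}$ is again quaternion Gaussian of type $N(0,4I)$, so its disjoint blocks $\tilde{\dot{\mathbf{\Omega}}}_1^k\in\mathbb{H}^{(l-P)\times l}$ and $\tilde{\dot{\mathbf{\Omega}}}_2^k\in\mathbb{H}^{(I_2-l+P)\times l}$ are independent Gaussian matrices. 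I would then take the expectation over $\tilde{\dot{\mathbf{\Omega}}}_2^k$ first, using Proposition \ref{p1} to replace $\|\tilde{\dot{\mathbf{\Omega}}}_2^k\|_2$ by the constant $\nu_1$ inside each monotone factor $x\mapsto x/\sqrt{1+x^2}$, and next over $\tilde{\dot{\mathbf{\Omega}}}_1^k$, using Proposition \ref{p2} to replace $\|\tilde{\dot{\mathbf{\Omega}}}_1^{k\dagger}\|_2$ by $\nu_2$. As in the matrix proof, the two resulting terms then collapse through $\eta_k=\alpha_k/\gamma_K^k$ and $\alpha_k+\eta_k=(1+\gamma_K^k)\sqrt{K}\sigma_{l-P+1}^k$ into the single per-slice contribution that appears inside the sum of \eqref{the1.6}, with $\nu=\nu_1\nu_2$. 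Summing over $k$ and adding $\|\dot{\mathcal{A}}_0\|_F$ produces the claimed estimate.

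The one genuinely new point relative to the matrix case is that Algorithm \ref{a1.4} draws a single random matrix $\dot{\mathbf{\Omega}}$ in step~1 and reuses it for every frontal slice, so the rotated matrices $\tilde{\dot{\mathbf{\Omega}}}^k$ are \emph{not} mutually independent across $k$. I expect this shared randomness to be the main obstacle to watch. It is resolved by observing that linearity of expectation only needs the marginal law of each $\tilde{\dot{\mathbf{\Omega}}}^k$, which is Gaussian by rotational invariance, so the inter-slice correlation is irrelevant when bounding the expectation of a \emph{sum}; no joint independence is required. A secondary care point is matching the dimensions in Propositions \ref{p1} and \ref{p2} to the slice sizes $I_1,I_2$ and to the block $\tilde{\dot{\mathbf{\Omega}}}_1^k$, which fixes the precise constants collected into $\nu$.
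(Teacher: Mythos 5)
Your overall route is the one the paper intends: the authors omit the proof of Theorem~\ref{th1.9}, remarking only that it follows the matrix case and the real-tensor arguments of \cite{DBLP:journals/jscic/CheW22}, and your plan --- start from the slicewise deterministic bound of Theorem~\ref{th1.8}, use linearity of expectation over the finite sum, and for each fixed $k$ repeat the matrix argument (rotational invariance of $\dot{\mathbf{\Omega}}$ under the unitary $\dot{\mathbf{V}}^k$, expectation first over $\tilde{\dot{\mathbf{\Omega}}}_2^k$ via Proposition~\ref{p1}, then over $\tilde{\dot{\mathbf{\Omega}}}_1^k$ via Proposition~\ref{p2}, and the collapse $\alpha_k+\eta_k=(1+\gamma_K^k)\sqrt{K}\sigma_{l-P+1}^k$) --- is exactly that reduction. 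Your observation that the single test matrix $\dot{\mathbf{\Omega}}$ is reused across all frontal slices, so the $\tilde{\dot{\mathbf{\Omega}}}^k$ are correlated but only their marginal laws matter when bounding the expectation of a sum, is the one genuinely tensor-specific point, and you handle it correctly.

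There is, however, a concrete gap between what your argument delivers and what the theorem asserts: the prefactor $\frac{1}{I_3}$ in \eqref{the1.6}. Theorem~\ref{th1.8} bounds $\|\dot{\mathcal{A}}-\dot{\mathcal{A}}_{CoR}\|_F$ by $\|\dot{\mathcal{A}}_0\|_F$ plus a plain sum over $k$ with no $1/I_3$ normalization (consistent with the paper's lemma, which uses the unitarity of the QDCT to write $\|\dot{\mathcal{A}}-\dot{\mathcal{A}}_{CoR}\|_F^2=\sum_k\|\cdot\|_F^2$ exactly). Taking expectations term by term, as you propose, therefore yields
\[
\mathbb{E}\|\dot{\mathcal{A}}-\dot{\mathcal{A}}_{CoR}\|_F\leq\|\dot{\mathcal{A}}_0\|_F+\sqrt{K}\,\nu\sum_{k=1}^{I_3}(1+\gamma_K^k)\,\sigma_{l-P+1}^k\,\gamma_K^k,
\]
which is weaker than \eqref{the1.6} by a factor of $I_3$ on the second term. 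The $1/I_3$ in the stated bound can only come from a non-unitary normalization of the transform (as in the DFT-based real-tensor analysis of \cite{DBLP:journals/jscic/CheW22}, where $\|\mathcal{A}\|_F^2$ equals $\tfrac{1}{I_3}$ times the sum of squared slice norms) or from an additional averaging step such as passing from $\sqrt{\sum_k a_k^2}$ to an average of the $a_k$; neither appears in your sketch, and the first is actually inconsistent with the unitarity assumption used elsewhere in the paper. You should either track the transform normalization explicitly and show where the $1/I_3$ enters, or note that your derivation establishes the bound only with $\tfrac{1}{I_3}$ replaced by $1$. A minor additional point: the constants $\nu_1,\nu_2$ must be instantiated with the slice dimensions $I_1,I_2$ and the block size of $\tilde{\dot{\mathbf{\Omega}}}_1^k$, which you flag but do not carry out; as stated, the theorem's $\nu$ still contains the matrix-case symbol $N$.
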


Basing on the above analysis, proof of the quaternion matrix case, and the proof of Theorem 4.1-4.3 in \cite{DBLP:journals/jscic/CheW22} for the real tensor cases, the proof of Theorem \ref{th1.7}-\ref{th1.9} are similar, so we omit the specific process here.
\section{Experimental Results}
\label{E}
In this section, the efficient of the proposed algorithms are tested. The accuracy and the corresponding time computation are assessed for the approximation of QURV and CoR-QURV by applying them to some simulated low-rank quaternion matrices and real color images. Similarly, the QTURV and CoR-QTURV are tested by applying them to simulated low-rank quaternion tensors and real color videos.  All the experiments were implemented in MATLAB R2019a, on a PC with a 3.00GHz CPU and 8GB RAM. The Quaternion Toolbox for Matlab\footnote{https://qtfm.sourceforge.io} and the Tensor Toolbox for Matlab\footnote{http://www.tensortoolbox.org} are also adopted in the following experiment.
\subsection{Settings}\label{sets}
Each color image is represented by a pure quaternion matrix as $\dot{\mathbf{O}}=\mathbf{O}_R\emph{i}+\mathbf{O}_G\emph{j}+\mathbf{O}_B\emph{k}\in\mathbb{H}^{M \times N}$, where $\mathbf{O}_R$, $\mathbf{O}_G$, and $\mathbf{O}_B$ are the pixel values of RGB channels, separately. A color video is presented as a pure quaternion tenor  $\dot{\mathcal{O}}=\mathcal{O}_R\emph{i}+\mathcal{O}_G\emph{j}+\mathcal{O}_B\emph{k}\in\mathbb{H}^{M \times N \times f}$, where $M \times N$ is the size of each frame of the video, $f$ is the number of frames, and $\mathcal{O}_R$, $\mathcal{O}_G$, and $\mathcal{O}_B$ are the pixel values of RGB channels, separately. For a given quaternion matrix rank $K<\min(M,N)$, the best rank-K approximation is $\dot{\mathbf{A}}_{URV, K}=\dot{\mathbf{U}}_K\dot{\mathbf{R}}_K\dot{\mathbf{V}}_K^H$, where $\dot{\mathbf{U}}_K=\dot{\mathbf{U}}(:, 1:K)$, $\dot{\mathbf{V}}_K=\dot{\mathbf{V}}$, and $\dot{\mathbf{R}}_K=\dot{\mathbf{R}}(1:K, :)$ with $\dot{\mathbf{U}}$, $\dot{\mathbf{V}}$, and $\dot{\mathbf{R}}$ are derived from Algorithm \ref{a1.0} or Algorithm \ref{a1.3}. For a given quaternion tensor, the best TQt-rank-K approximation is $\dot{\mathcal{A}}_{URV, K}=\dot{\mathcal{U}}_K\star_{QT}\dot{\mathcal{R}}_K\star_{QT}\dot{\mathcal{V}}_K^H$, where $\dot{\mathcal{U}}_K=\dot{\mathcal{U}}(:, 1:K, :)$, $\dot{\mathcal{V}}_K=\dot{\mathcal{V}}$, and $\dot{\mathcal{R}}_K=\dot{\mathcal{R}}(1:K, :, :)$ with $\dot{\mathcal{U}}$, $\dot{\mathcal{V}}$, and $\dot{\mathcal{R}}$ are derived from Algorithm \ref{a1.2} or Algorithm \ref{a1.4}. 

The running time is tested by the pair ``tic-toc"(in seconds). The Relative Error (RE) are defined as $\frac{\|\dot{\mathbf{A}}-\dot{\mathbf{A}}_{Rec}\|_F}{\|\dot{\mathbf{A}}\|_F}$, where $\dot{\mathbf{A}}_{Rec}$ is the recovered result of $\dot{\mathbf{A}}$ and $\frac{\|\dot{\mathcal{A}}-\dot{\mathcal{A}}_{Rec}\|_F}{\|\dot{\mathcal{A}}\|_F}$, where $\dot{\mathcal{A}}_{Rec}$ is the recovered result of $\dot{\mathcal{A}}$.
\subsection{Testing the proposed QURV and CoR-QURV algorithms}
In this section,  synthetic quaternion matrices and several color images are used to test the efficiency of the proposed QURV and CoR-QURV. The truncated-QSVD is approximated by $\dot{\mathbf{U}}(:, 1:K)\dot{\mathbf{R}}(1:K, 1:K)\dot{\mathbf{V}}(:,1:K)^H$, where $\dot{\mathbf{U}}$,  $\dot{\mathbf{S}}$ and, $\dot{\mathbf{V}}$ are obtained by QSVD. The truncated-QQRCP is approximated by $\dot{\mathbf{Q}}(:,1:K)\breve{\dot{\mathbf{R}}}(1:K,:)$, where $\dot{\mathbf{Q}}$, $\dot{\mathbf{R}}$, and $\mathbf{P}$ are obtained by QQRCP, and $\breve{\dot{\mathbf{R}}}=\dot{\mathbf{R}}\mathbf{P}^T$. 
\begin{example}\label{eg1}
	Consider a quaternion matrix $\dot{\mathbf{A}}$ is specified in the following format
	\begin{equation}
		\dot{\mathbf{A}}=\dot{\mathbf{P}}\dot{\mathbf{Q}}^H,
	\end{equation}
	where $\dot{\mathbf{P}}\in\mathbb{H}^{500 \times 100}$ and $\dot{\mathbf{Q}}\in\mathbb{H}^{500 \times 100}$ are two random quaternion matrices. 
\end{example}
Let the truncated number $K=10:10:100$, the comparison of  these three methods are displayed in Figure \ref{f1}.
It is observed from Figure \ref{f1} that regarding the RE, truncated QSVD outperforms truncated QURV and truncated QQRCP. Truncated QQRCP performs the worst in terms of RE. In terms of the running time, truncated QQRCP is the most time-saving method, followed by the truncated QURV method, while the truncated QSVD is the least efficient.
\begin{figure}[htbp]
	\centering
	\subfigure{
		\begin{minipage}{6cm}
			\centering
			\includegraphics[width = 6cm]{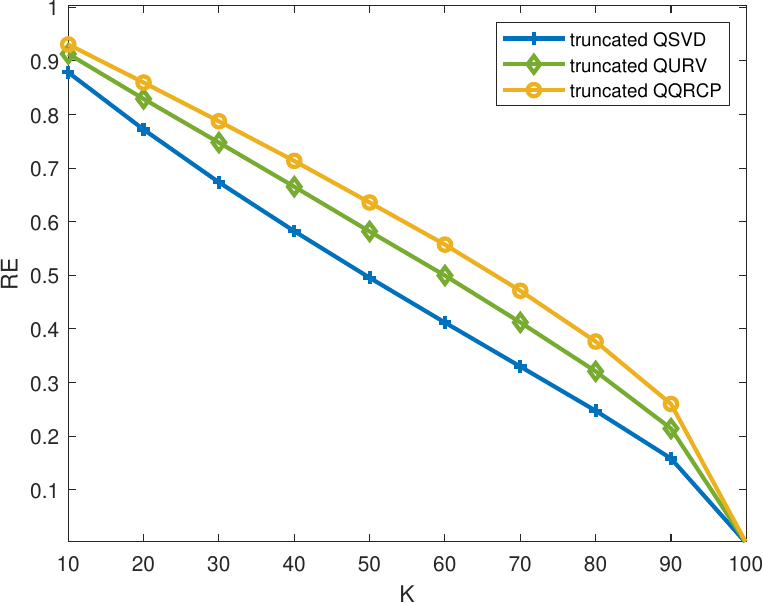}
		\end{minipage}
	}
	\subfigure{
		\begin{minipage}{6cm}
			\centering
			\includegraphics[width = 6cm]{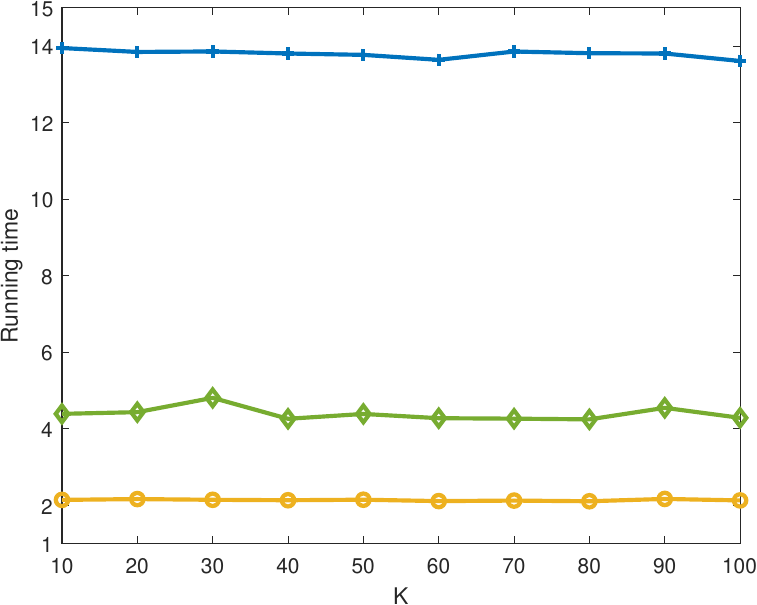}
		\end{minipage}
	}
	\caption{Comparison of  the RE and running time of implementing truncated-QURV truncated-QSVD, and truncated-QQRPCP to  Example \ref{eg1} with $K=10, 20, \cdots, 100.$}	\label{f1}
\end{figure}
\begin{example}\label{eg2}
	Consider a quaternion matrix $\dot{\mathbf{A}}$ is specified in the following format
	\begin{equation}
		\dot{\mathbf{A}}=\dot{\mathbf{U}}\mathbf{\Sigma}\dot{\mathbf{V}}^H,
	\end{equation}
	where $\dot{\mathbf{U}}\in\mathbb{H}^{500 \times 500}$ and $\dot{\mathbf{V}}\in\mathbb{H}^{500 \times 500}$ are two unitary  quaternion matrices derived from computing QSVD of a random quaternion matrix $\bar{\dot{\mathbf{A}}}\in\mathbb{H}^{500 \times 500}.$ $\mathbf{\Sigma}$ is a real diagonal matrix with the $i$th diagonal element is $1/i^2$ $( \mathbf{\Sigma}=diag(1,1/2^2, 1/3^2,\cdots, 1/500^2))$.
\end{example}
Let $K=2:2:20$, the comparison of  these three methods are displayed in Figure \ref{f2}.
\begin{figure}[htbp]	
	\centering
	\subfigure{
		\begin{minipage}{6cm}
			\centering
			\includegraphics[width = 6cm]{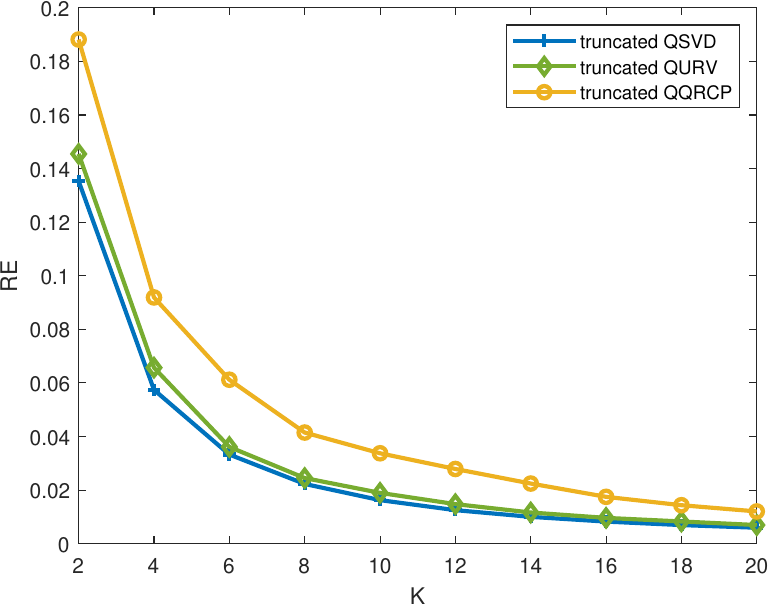}
		\end{minipage}
	}
	\subfigure{
		\begin{minipage}{6cm}
			\centering
			\includegraphics[width = 6cm]{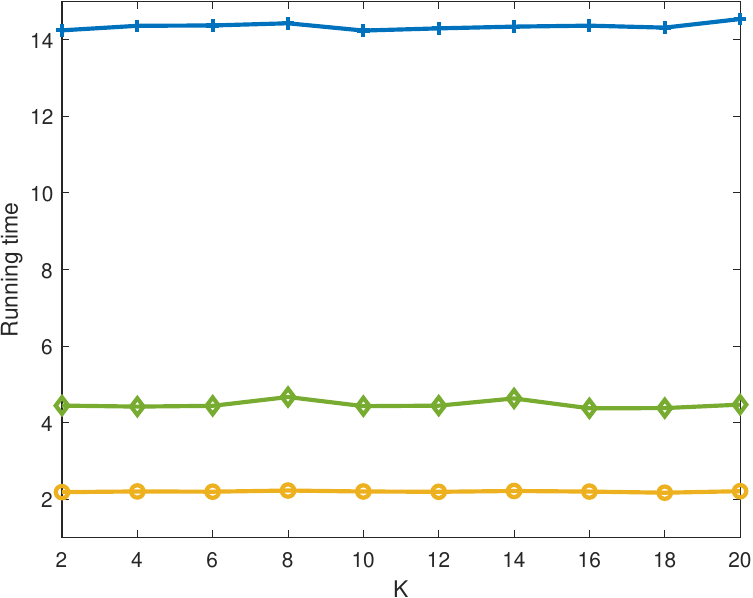}
		\end{minipage}
	}
	\caption{Comparison of  the RE and running time of implementing truncated-QURV truncated-QSVD, and truncated-QQRPCP to  Example \ref{eg2} with $K=2, 4, \cdots, 20.$}\label{f2}
\end{figure}
It is observed from Figure \ref{f2} that truncated QQRCP exhibits the worse performance in terms of RE, especially when the truncated number $K$ is small. QSVD and QURV are comparable regarding the RE. In terms of the running time, truncated QQRCP saves the greatest time, followed by truncated QURV and truncated QSVD.

Then the comparison of randomized strategy is given by utilizing CoR-QURV and randQSVD to color images with the power parameter $p=0, 1, 2$.
\begin{example}\label{eg3}
	Consider two color image as test images (can be seen in Figure \ref{f3}). The size of ``Flower" is $500\times500$ and  the size of ``House" is $1024\times764$. The quaternion matrix representation for color image is given in subsection \ref{sets}. 
	\begin{figure}[htpb]	
		\centering
		\subfigure{
			\begin{minipage}{4cm}
				\centering
				\includegraphics[scale=0.22]{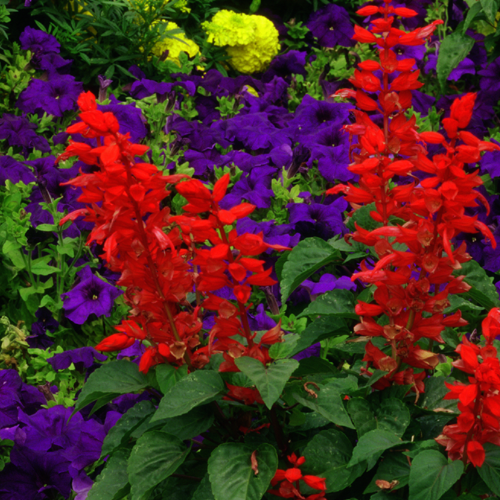}
			\end{minipage}
		}
		\subfigure{
			\begin{minipage}{6cm}
				\centering
				\includegraphics[scale=0.15]{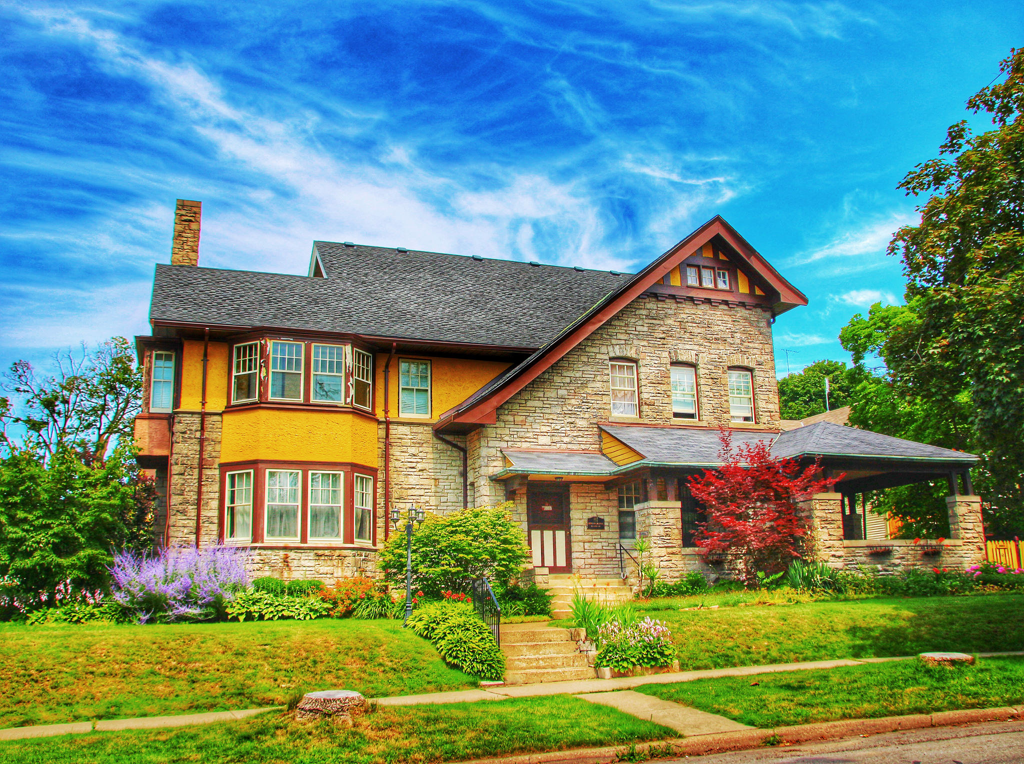}
			\end{minipage}
		}
		\caption{The test images: ``Flower" and ``House".}\label{f3}
	\end{figure}
\end{example}
For image ``Flower", let $K=20:20:200$, the comparison of  CoR-QURV and randQSVD are displayed in Figure \ref{f4}.
\begin{figure}[htbp]	
	\centering
	\subfigure{
		\begin{minipage}{6cm}
			\centering
			\includegraphics[width = 6cm]{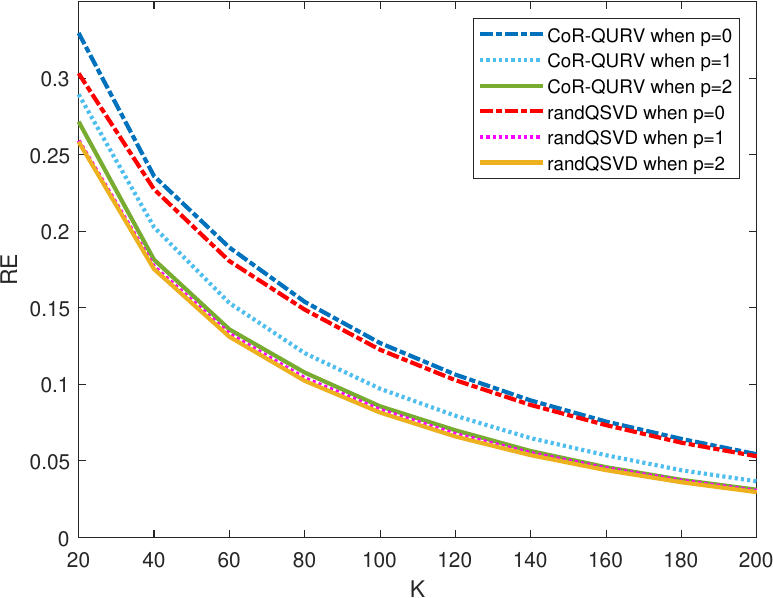}
		\end{minipage}
	}
	\subfigure{
		\begin{minipage}{6cm}
			\centering
			\includegraphics[width = 6cm]{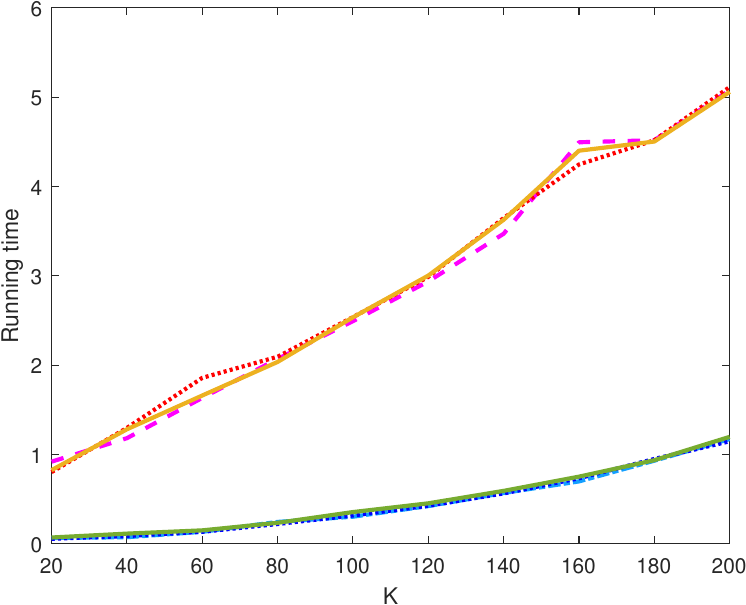}
		\end{minipage}
	}
	\caption{Comparison of  the RE and running time of implementing CoR-QURV and randQSVD to  Example \ref{eg3} with $K=20, 40, \cdots, 200.$}\label{f4}
\end{figure}

For image ``House", let $K=40:40:400$, the comparison of  CoR-QURV and randQSVD are displayed in Figure \ref{f5}.
\begin{figure}[htbp]	
	\centering
	\subfigure{
		\begin{minipage}{6cm}
			\centering
			\includegraphics[width = 6cm]{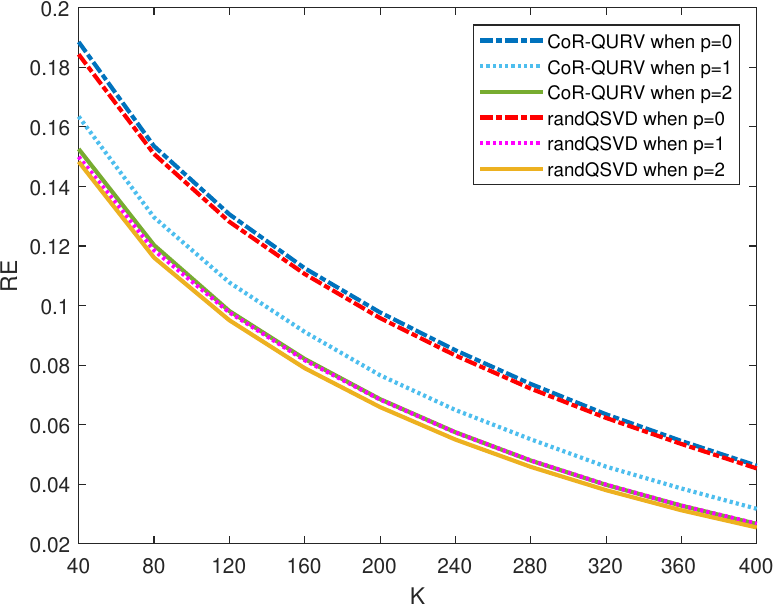}
		\end{minipage}
	}
	\subfigure{
		\begin{minipage}{6.cm}
			\centering
			\includegraphics[width = 6.cm]{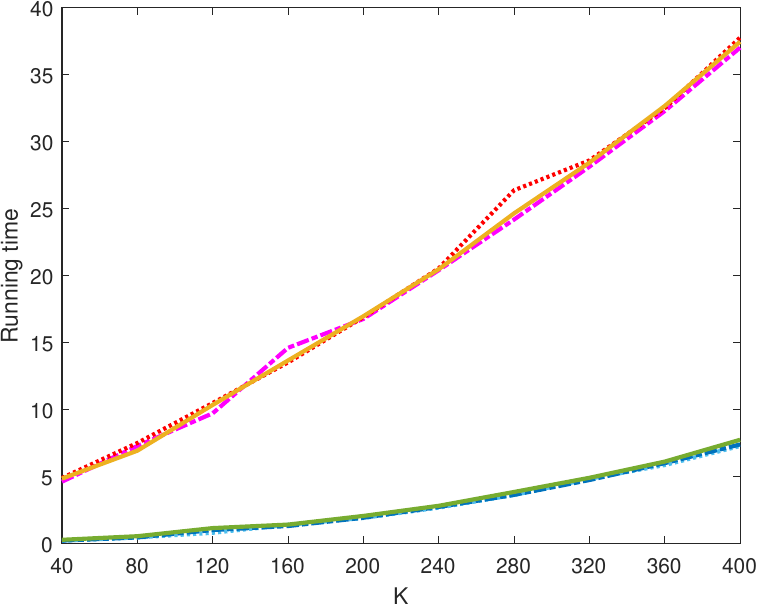}
		\end{minipage}
	}
	\caption{Comparison of  the RE and running time of implementing CoR-QURV and randQSVD to  Example \ref{eg3} with $K=40, 80, \cdots, 400.$}\label{f5}
\end{figure}

Observing from Figure \ref{f4}-\ref{f5}, as the truncated number increases in size, the RE decreases and becomes more stable. As the truncated number increases in size, the techniques require more time to compute. Besides, Figure \ref{f4}-\ref{f5} also demonstrate that the CoR-QURV exhibits slightly lower performance in terms of RE compared to randQSVD when they utilize the same power parameter $p$. When $p=0$, the RE are more unfavorable compared to when $p=1, 2$. When the value of $p$ is equal to 2, the RE can achieve the optimal outcome for randQSVD and CoR-QURV. CoR-QURV outperforms randQSVD in terms of running time for all values of $p$. 

Next, the visual results of image ``House" are shown in Figure \ref{f101}. In this experiment, the power parameter $p=1$, and the rank-K ($K=20, 40, 400$) approximation of randQSVD is given in the first column. Meanwhile, the second column is the recovery of  CoR-QURV.
\begin{figure}[htbp]	
	\centering
	\subfigure[randQSVD $K=20$]{
		\begin{minipage}{6.cm}
			\centering
			\includegraphics[width = 6.cm]{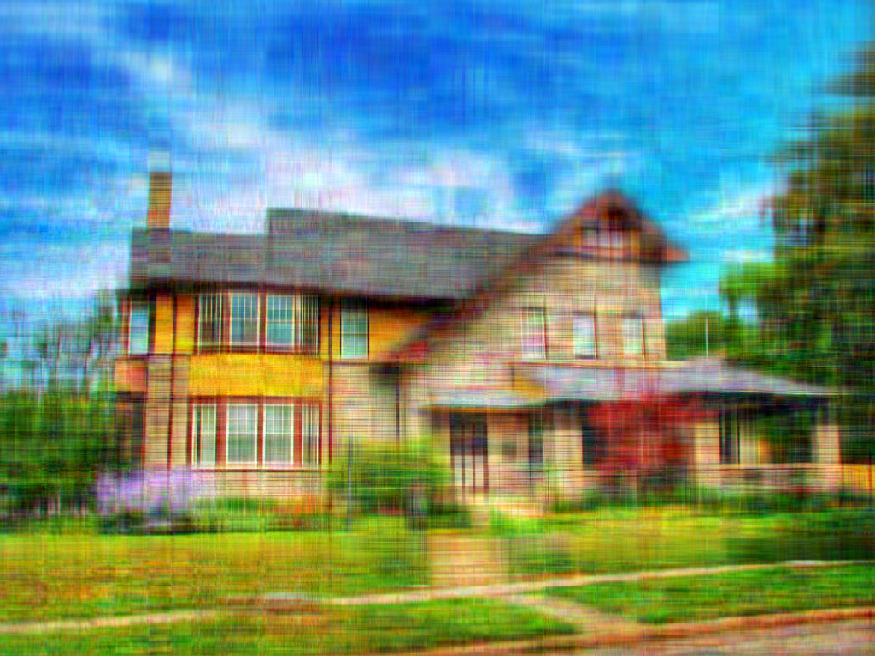}
			
		\end{minipage}
	}
	\subfigure[CoR-QURV $K=20$]{
		\begin{minipage}{6.cm}
			\centering
			\includegraphics[width = 6.cm]{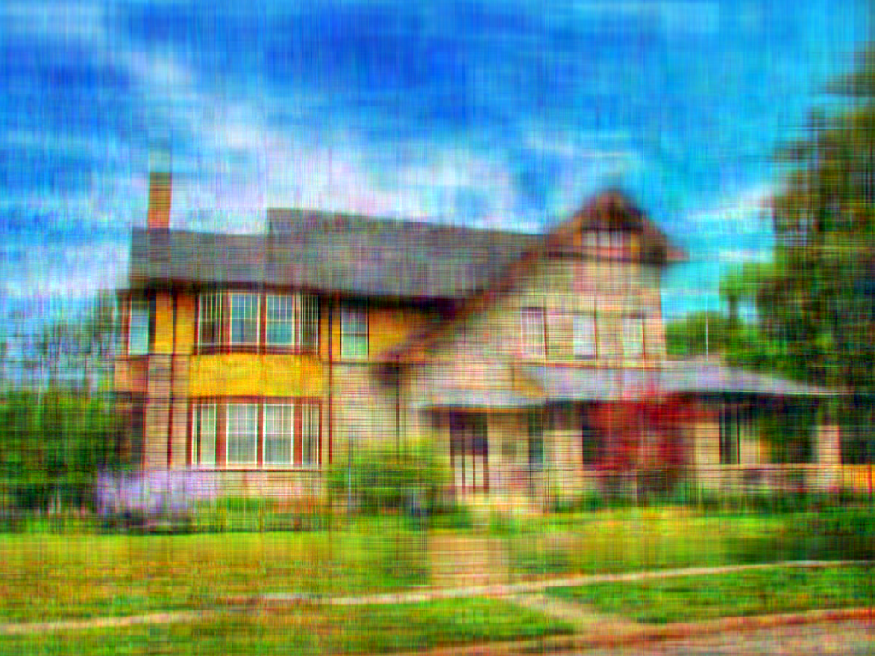}
		\end{minipage}
	}\\
	\subfigure[randQSVD $K=40$]{
		\begin{minipage}{6.cm}
			\centering
			\includegraphics[width = 6.cm]{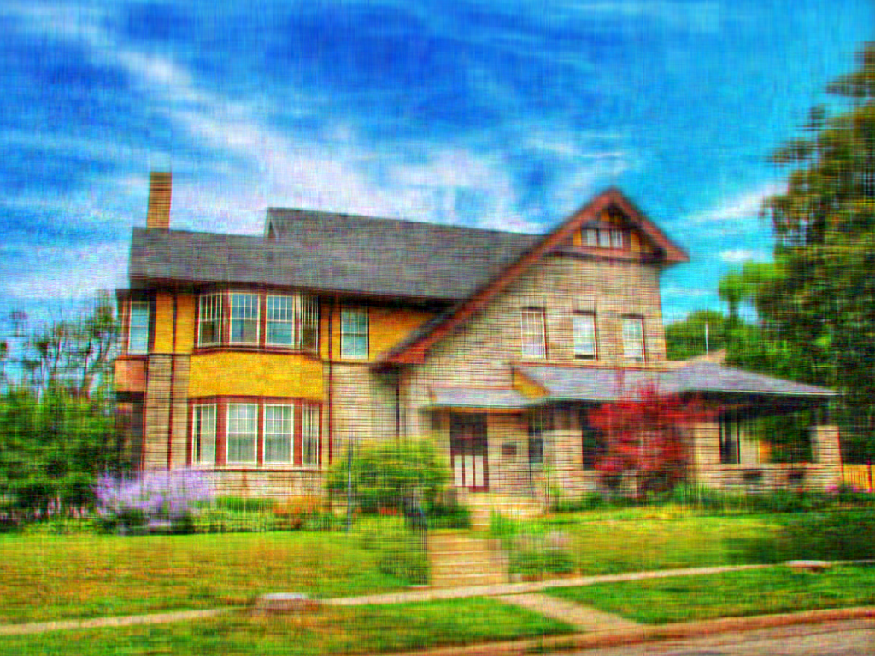}
			
		\end{minipage}
	}
	\subfigure[CoR-QURV $K=40$]{
		\begin{minipage}{6.cm}
			\centering
			\includegraphics[width = 6.cm]{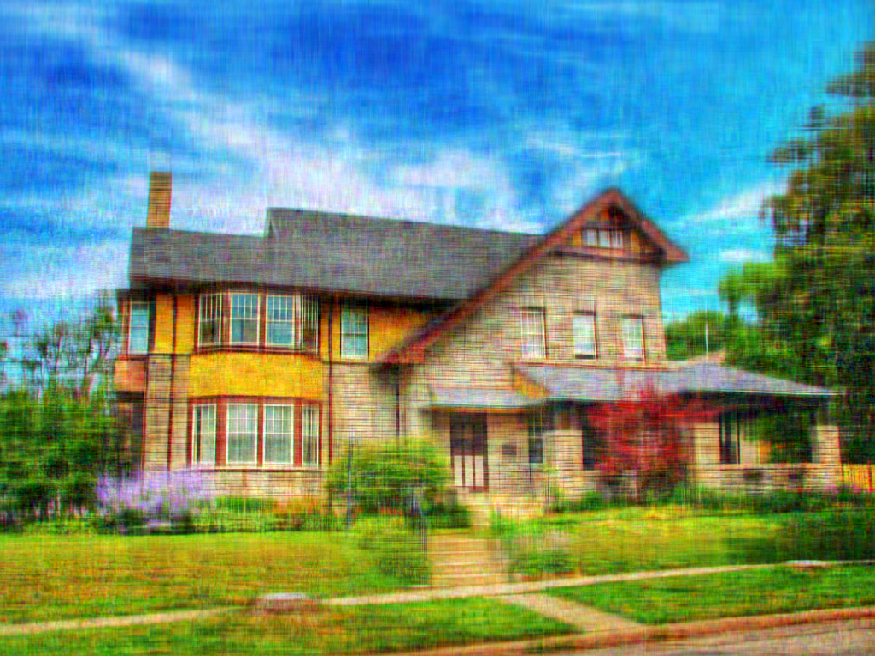}
		\end{minipage}
	}\\
	\subfigure[randQSVD $K=400$]{
		\begin{minipage}{6.cm}
			\centering
			\includegraphics[width = 6.cm]{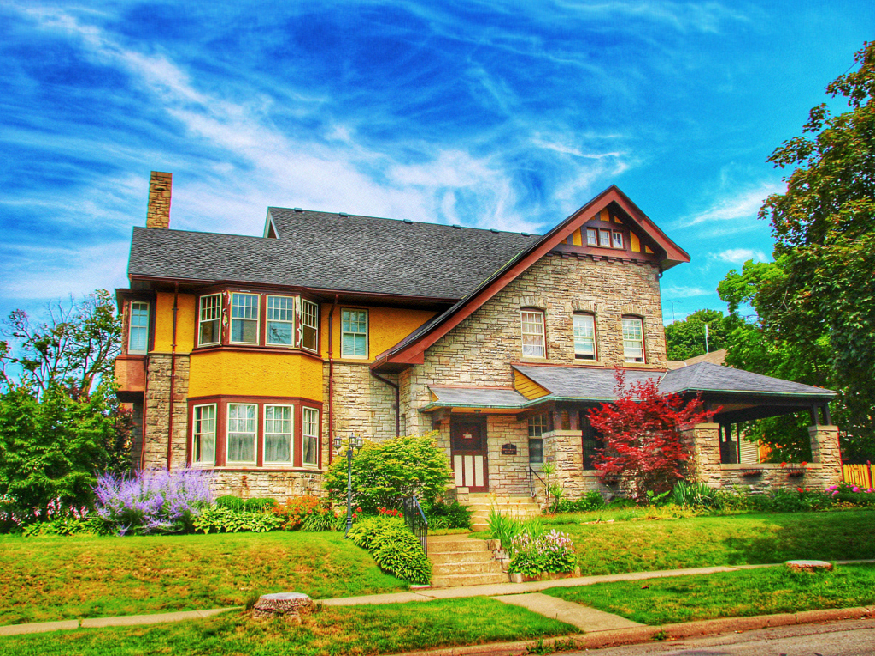}
			
		\end{minipage}
	}
	\subfigure[CoR-QURV $K=400$]{
		\begin{minipage}{6.cm}
			\centering
			\includegraphics[width = 6.cm]{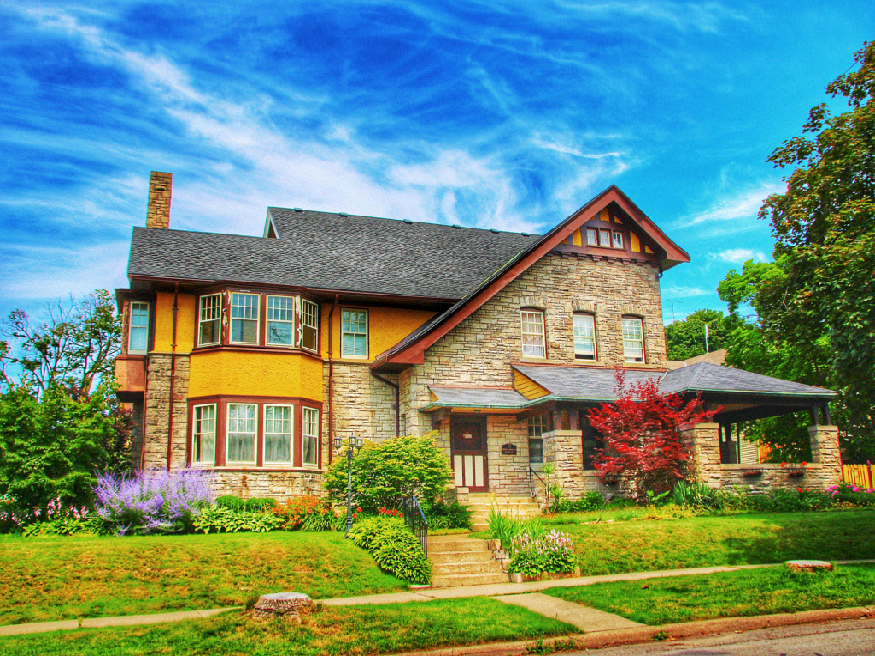}
		\end{minipage}
	}\\
	
	\caption{The rank-K approximation of  ``House" by utilizing CoR-QTURV and randQTSVD. }\label{f101}
\end{figure}
Observing from visual results in Figure \ref{f101}, there is minimal distinction between visual impact of the original image and the recovered image when $K=400$. When the truncated number is tiny, both CoR-QTURV and randQTSVD algorithms can approximately restore the original image, and there is not much visual difference between them.

\subsection{Testing the proposed QTURV and CoR-QTURV algorithms}
In this section,  synthetic quaternion tensors and several color videos are used to test the efficiency of the proposed QTURV and CoR-QTURV. The  TQt-rank-K approximation of truncated-QTSVD is approximated by $\dot{\mathcal{U}}(:,1:K, :)\star_{QT}\dot{\mathcal{R}}_K(1:K,1:K, :)\star_{QT}\dot{\mathcal{V}}_K(:,1:K, :)^H$, where $\dot{\mathcal{U}}$,  $\dot{\mathcal{S}}$ and, $\dot{\mathcal{V}}$ are obtained by QTSVD. 
\begin{example}\label{eg4}
	Consider a quaternion matrix $\dot{\mathcal{A}}$ is specified in the following format
	\begin{equation}
		\dot{\mathcal{A}}=\dot{\mathcal{P}}\star_{QT}\dot{\mathcal{Q}}^H,
	\end{equation}
	where $\dot{\mathcal{P}}\in\mathbb{H}^{300 \times 20 \times 100}$ and $\dot{\mathcal{Q}}\in\mathbb{H}^{300 \times 20 \times 100}$ are two random quaternion tensors. 
\end{example}
The TQt-rank is set to $K=1:20$. The comparison between QTURV and QTSVD is shown in Figure \ref{f6}. Figure \ref{f6} demonstrates that truncated QTSVD performs better than truncated QTURV in terms of the RE. Truncated QTURV is more time-efficient than truncated QTSVD.
\begin{figure}[h]
	\centering
	\subfigure{
		\begin{minipage}{6.cm}
			\centering
			\includegraphics[width = 6.cm]{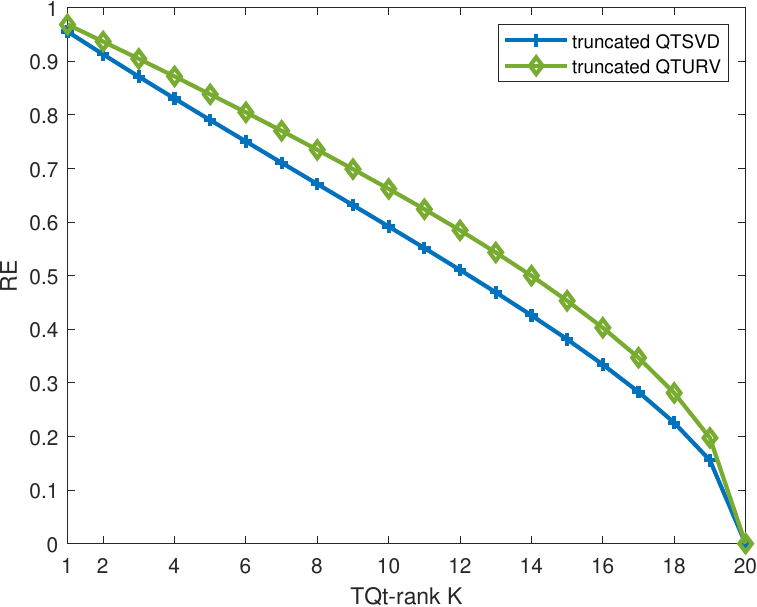}
		\end{minipage}
	}
	\subfigure{
		\begin{minipage}{6.cm}
			\centering
			\includegraphics[width = 6.cm]{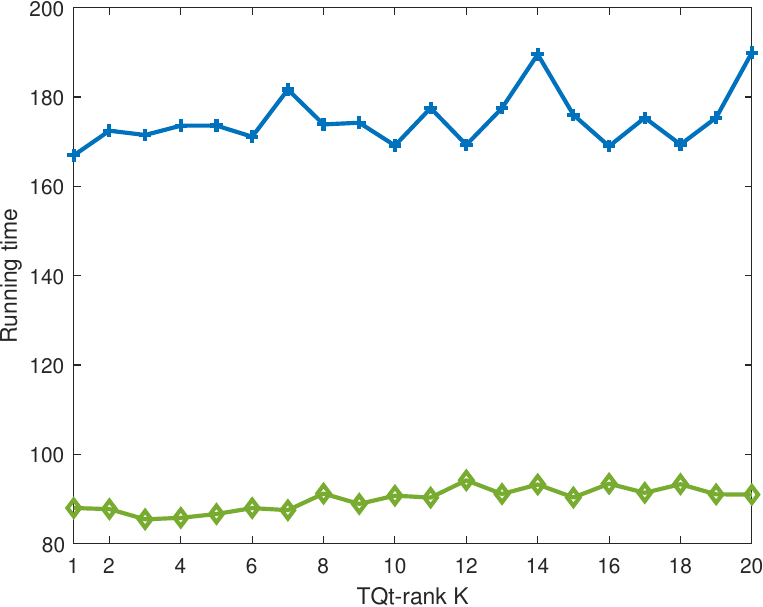}
		\end{minipage}
	}
	\caption{Comparison of  the RE and running time of implementing truncated-QTSVD   and truncated-QTURV to  Example \ref{eg4} with TQt-rank $K=1, 2, \cdots, 20.$}	\label{f6}
\end{figure}

\begin{example}\label{eg5}
	Consider a quaternion tensor  $\dot{\mathcal{A}}$ is specified in the following format
	\begin{equation}
		\dot{\mathcal{A}}=\dot{\mathcal{U}}\star_{QT}\mathcal{D}\star_{QT}\dot{\mathcal{V}}^H,
	\end{equation}
	where $\dot{\mathcal{U}}\in\mathbb{H}^{300 \times 300 \times30}$ and $\dot{\mathcal{V}}\in\mathbb{H}^{300 \times 300 \times 30}$ are two unitary  quaternion tensors derived from computing QTSVD of a random quaternion matrix $\bar{\dot{\mathbf{A}}}\in\mathbb{H}^{300 \times 300 \times 30}.$ $\mathcal{D}$ is a real diagonal tensor that is stacked by diagonal matrices with the $i$th diagonal element is $1/i^2$ $(\mathcal{D}(:, :, k)=diag(1,1/2^2, 1/3^2,\cdots, 1/300^2))$, $k=1:30$.
\end{example}
Let the TQt-rank $K=1: 20$, the comparison of  QTURV with QTSVD are displayed in Figure \ref{f7}.  Figure \ref{f7} illustrates that the truncated QTSVD method outperforms the truncated QTURV method in terms of the RE. As the TQt-rank increases, the discrepancy in RE between truncated QTSVD and truncated QTURV is diminishing. In addition, the truncated QTURV method is more time-efficient than the truncated QTSVD method in all situations.  
\begin{figure}[h]
	\centering
	\subfigure{
		\begin{minipage}{6cm}
			\centering
			\includegraphics[width = 6.cm]{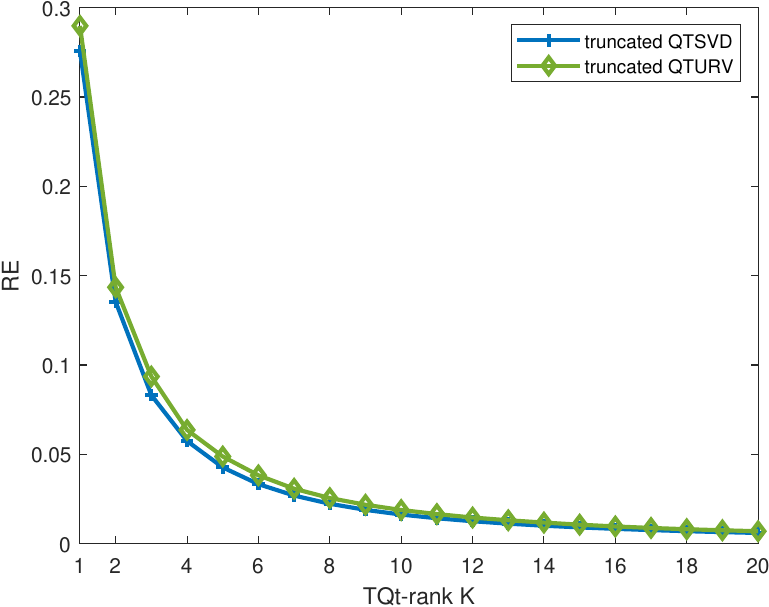}
		\end{minipage}
	}
	\subfigure{
		\begin{minipage}{6.cm}
			\centering
			\includegraphics[width = 6.cm]{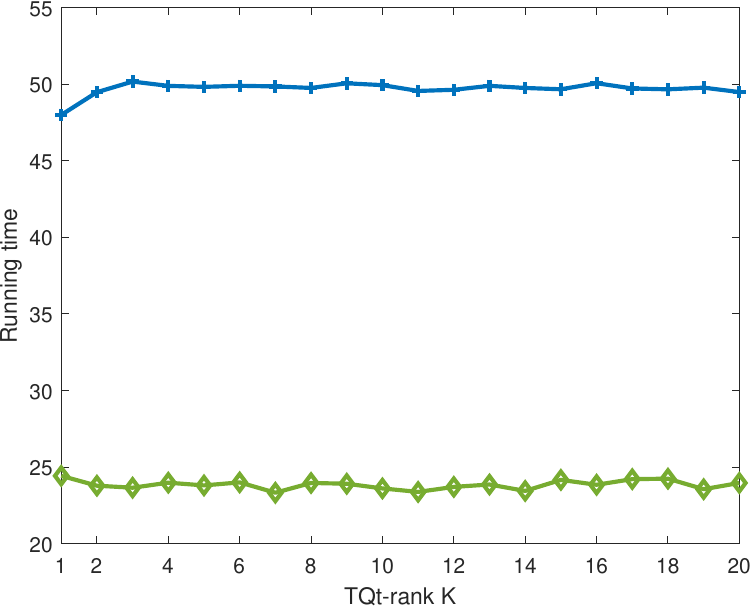}
		\end{minipage}
	}
	\caption{Comparison of  the RE and running time of implementing truncated-QTSVD   and truncated-QTURV to  Example \ref{eg5} with TQt-rank $K=1, 2, \cdots, 20.$}	\label{f7}
\end{figure}

Then the comparison of randomized strategy is given by utilizing CoR-QTURV and randQTSVD to color videos with the power parameter $p=0, 1, 2$.
\begin{example}\label{eg6}
	Consider two color videos as test data (can be seen in Figure \ref{f8}). The size of  ``Football" is $288\times352\times125$ and  the size of  ``Landscape" is $288\times352\times250$. The quaternion tensor representation for color video is given in subsection \ref{sets}. 
	\begin{figure}[htpb]	
		\centering
		\subfigure{
			\begin{minipage}{6.cm}
				\centering
				\includegraphics[scale=0.25]{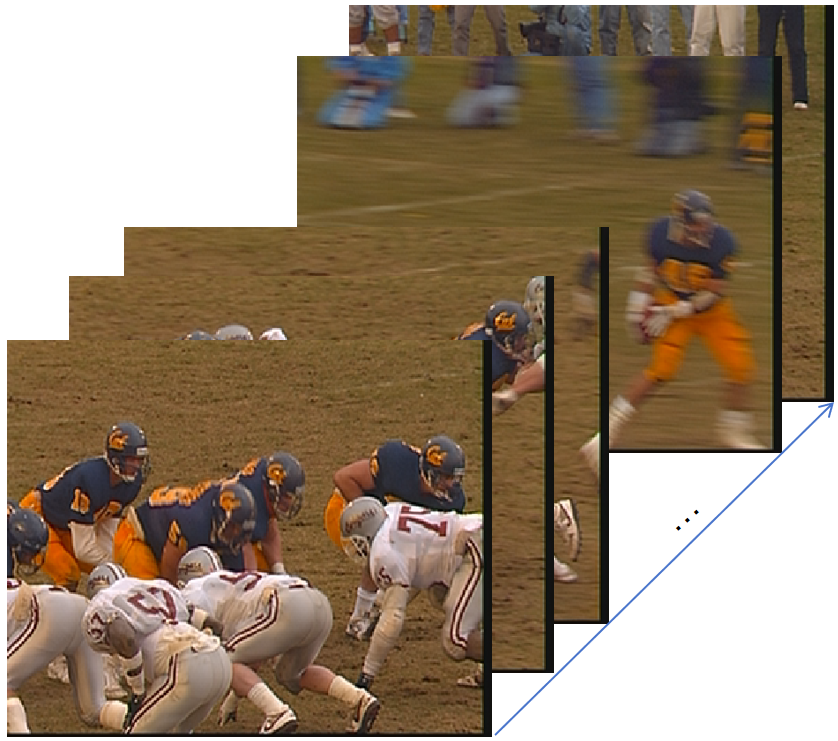}
			\end{minipage}
		}
		\subfigure{
			\begin{minipage}{6.cm}
				\centering
				\includegraphics[scale=0.25]{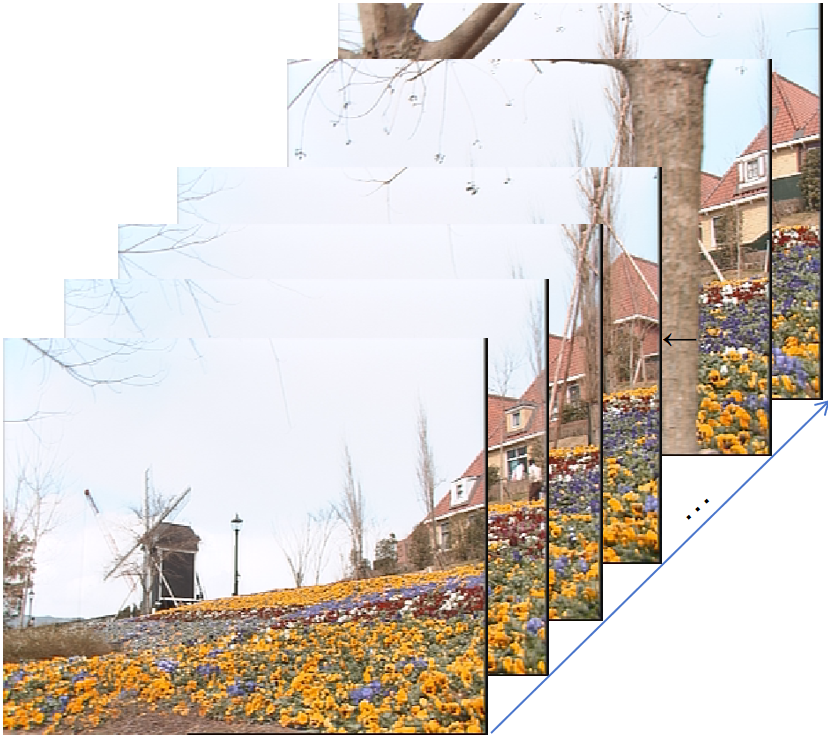}
			\end{minipage}
		}
		\caption{The test videos: ``Football" and ``Landscape".}\label{f8}
	\end{figure}
\end{example}
For video ``Football", let the TQt-rank $K=6:6:60$, the comparison of  CoR-QTURV and randQTSVD is displayed in Figure \ref{f9}.
\begin{figure}[htpb]	
	\centering
	\subfigure{
		\begin{minipage}{6.cm}
			\centering
			\includegraphics[width = 6.cm]{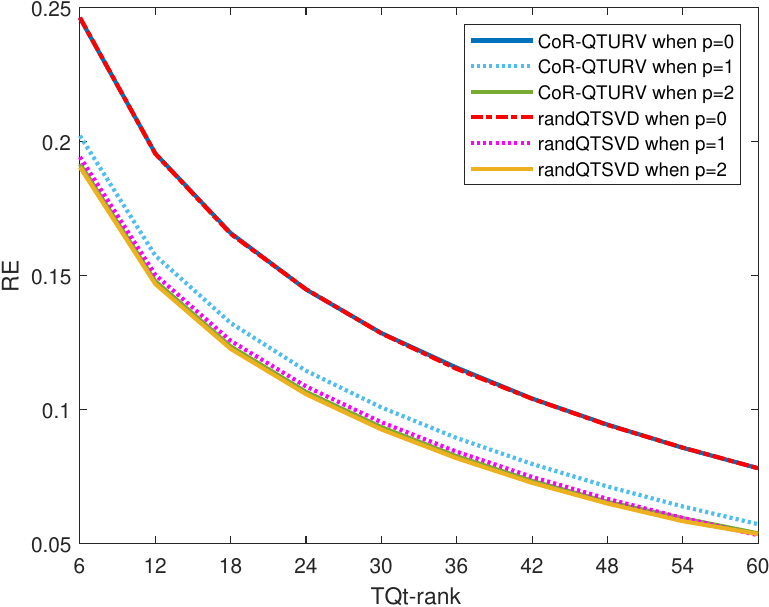}
		\end{minipage}
	}
	\subfigure{
		\begin{minipage}{6.cm}
			\centering
			\includegraphics[width = 6.cm]{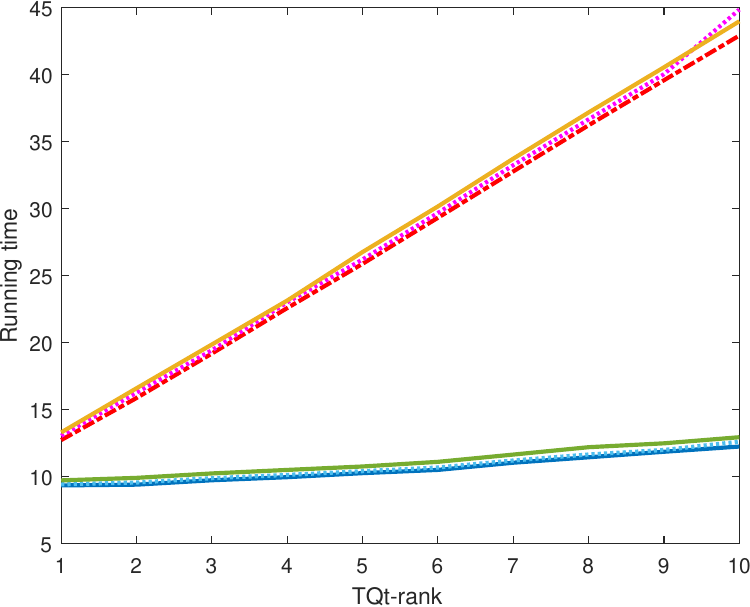}
		\end{minipage}
	}
	\caption{Comparison of  the RE and running time of implementing CoR-QTURV and randQTSVD to  Example \ref{eg6} with TQt-rank $K=6, 12, \cdots, 60.$}\label{f9}
\end{figure}

For video ``Landscape", let the TQt-rank $K=10:10:150$, the comparison of  CoR-QURV and randQSVD is displayed in Figure \ref{f10}.
\begin{figure}[htpb]	
	\centering
	\subfigure{
		\begin{minipage}{6.cm}
			\centering
			\includegraphics[width = 6.cm]{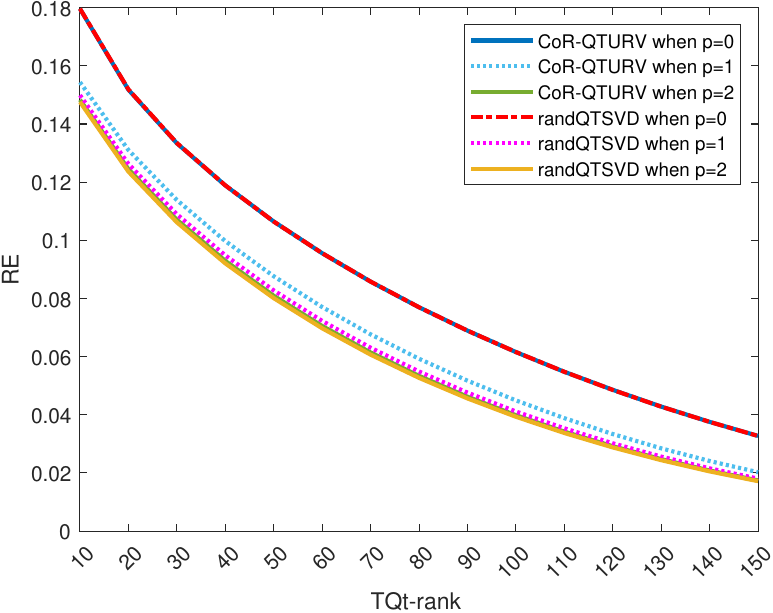}
			
		\end{minipage}
	}
	\subfigure{
		\begin{minipage}{6.cm}
			\centering
			\includegraphics[width = 6.cm]{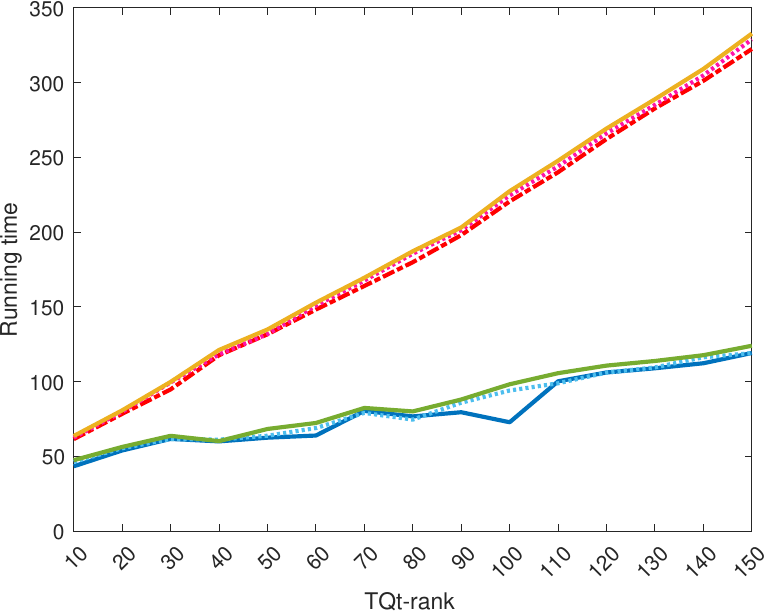}
		\end{minipage}
	}
	\caption{Comparison of  the RE and running time of implementing CoR-QTURV and randQTSVD to  Example \ref{eg6} with TQt-rank $K=10, 20, \cdots, 150.$}\label{f10}
\end{figure}

The RE decreases as the TQt-rank increases, as observed in Figure \ref{f9}-\ref{f10}. The techniques necessitate an increased amount of computation time as the TQt-rank increases. Additionally, the CoR-QTURV exhibits slightly inferior performance in terms of RE compared to randQTSVD when the same power parameter $p$ is used, as illustrated in Figure \ref{f9}-\ref{f10}. The RE are more poor when $p=0$ than when $p=1, 2$. The RE can accomplish the optimal outcome for randQTSVD and CoR-QTURV when the value of $p$ is equal to 2. For all values of $p$, CoR-QTURV outperforms randQTSVD in terms of running time.

Next, the visual results of video ``Football" are shown in Figure \ref{f11} (the results of the first frame) and Figure \ref{f12} (the results of the last frame). In this experiment, the power parameter $p=1$, and the TQt-rank $K=6, 12, 18, 36$. The first column is the original frame, the second column is the recovery of randQTSVD, and the last column is the recovery of  CoR-QTURV.
\begin{figure}[htpb]	
	\centering
	\subfigure[Original]{
		\begin{minipage}{4.cm}
			\centering
			\includegraphics[width = 4.cm]{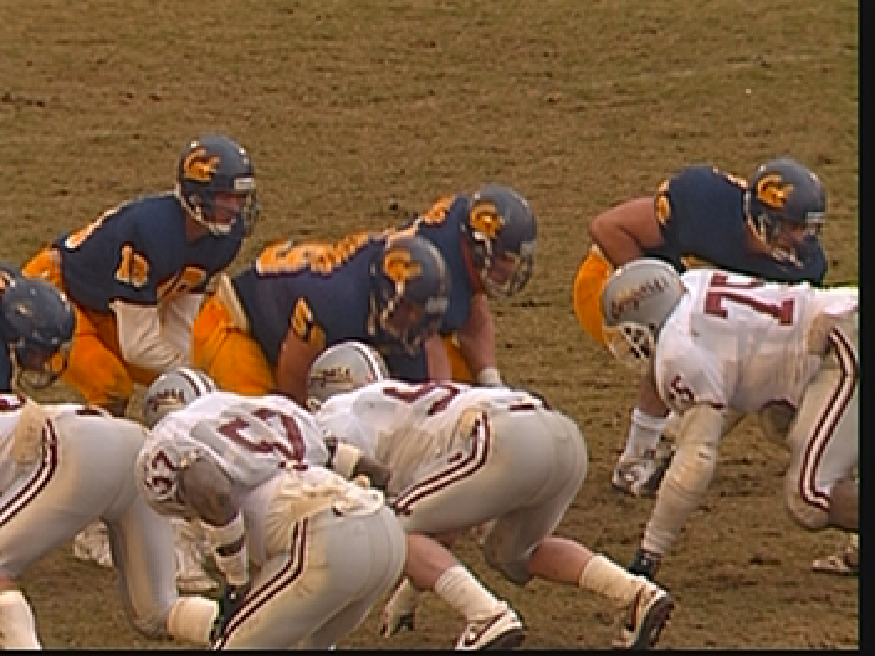}
			
		\end{minipage}
	}
	\subfigure[randQTSVD $K=6$]{
		\begin{minipage}{4.cm}
			\centering
			\includegraphics[width = 4cm]{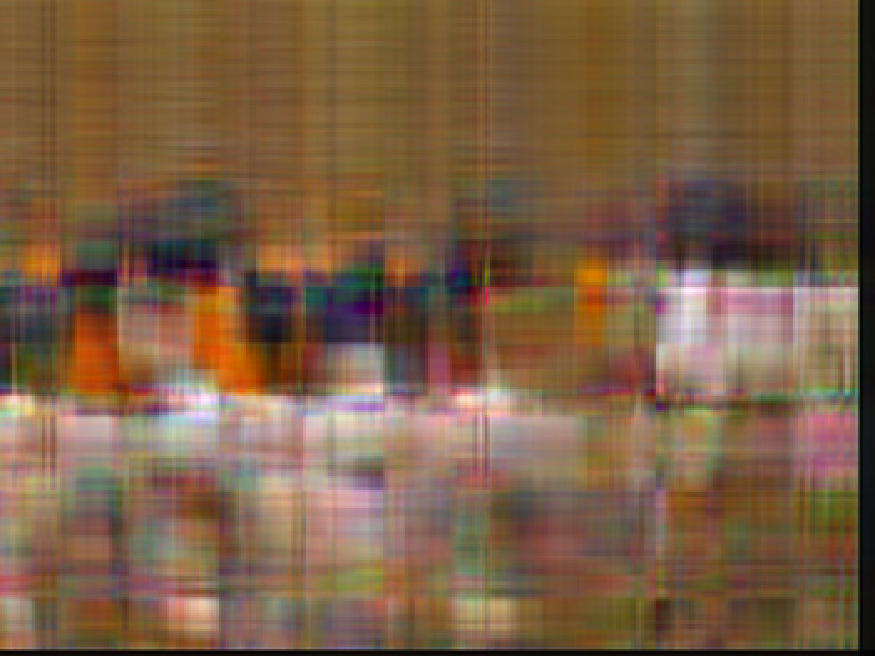}
			
		\end{minipage}
	}
	\subfigure[CoR-QTURV $K=6$]{
		\begin{minipage}{4.cm}
			\centering
			\includegraphics[width = 4.cm]{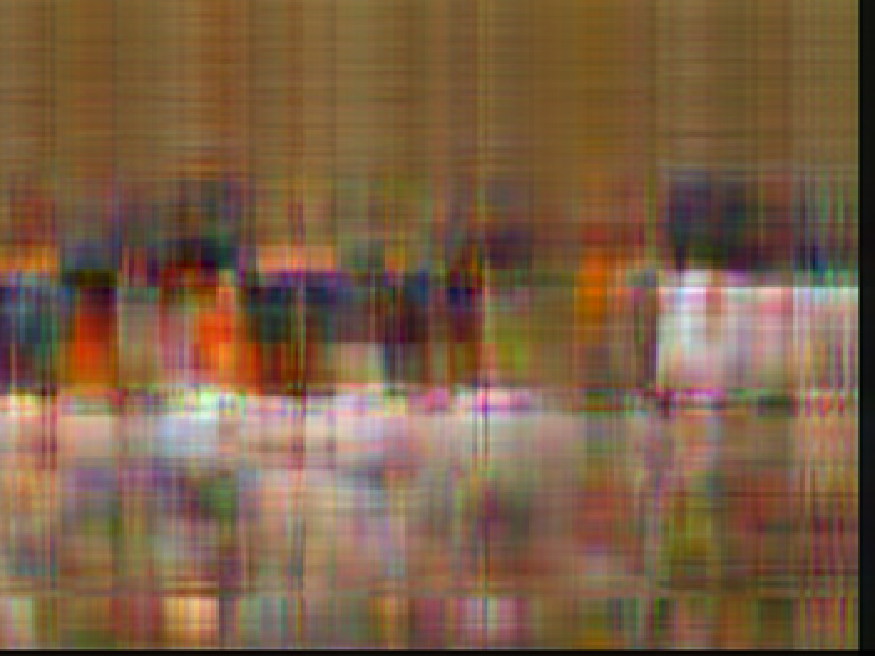}
		\end{minipage}
	}\\
	\subfigure[Original]{
		\begin{minipage}{4.cm}
			\centering
			\includegraphics[width = 4.cm]{ball1.png}
			
		\end{minipage}
	}
	\subfigure[randQTSVD $K=12$]{
		\begin{minipage}{4.cm}
			\centering
			\includegraphics[width = 4.cm]{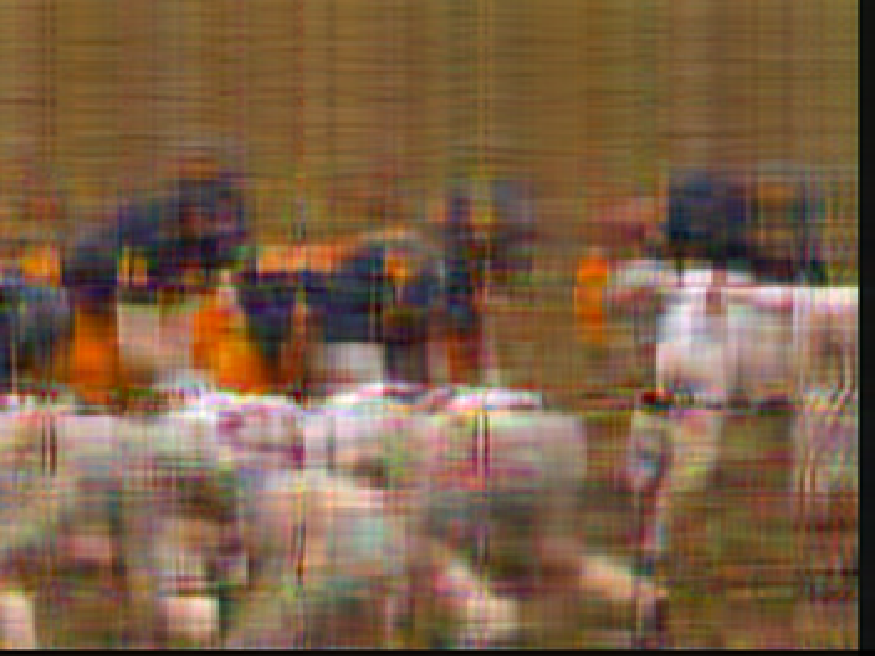}
			
		\end{minipage}
	}
	\subfigure[CoR-QURV $K=12$]{
		\begin{minipage}{4.cm}
			\centering
			\includegraphics[width = 4.cm]{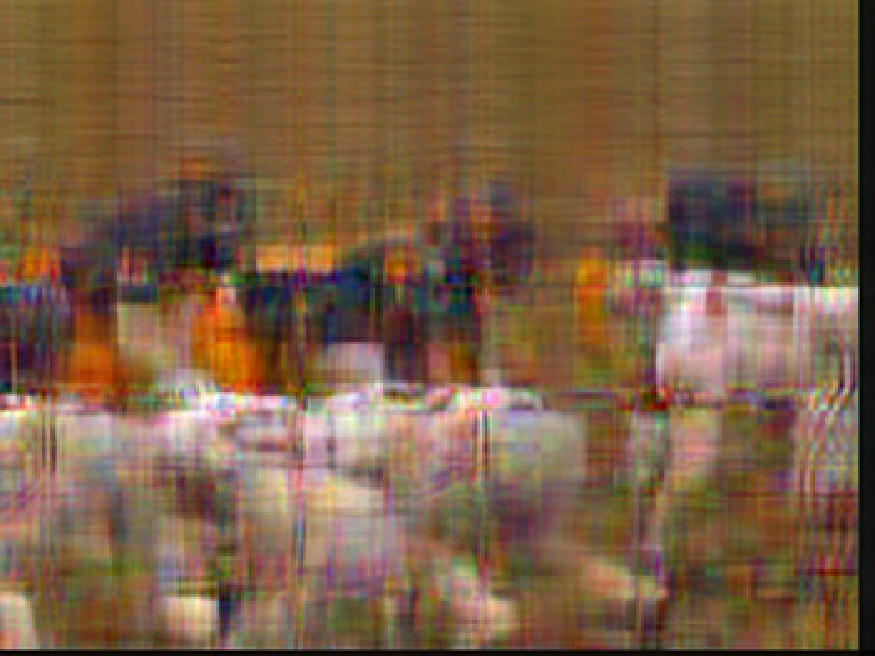}
		\end{minipage}
	}\\
	\subfigure[Original]{
		\begin{minipage}{4.cm}
			\centering
			\includegraphics[width = 4.cm]{ball1.png}
			
		\end{minipage}
	}
	\subfigure[randQSVD $K=18$]{
		\begin{minipage}{4.cm}
			\centering
			\includegraphics[width = 4.cm]{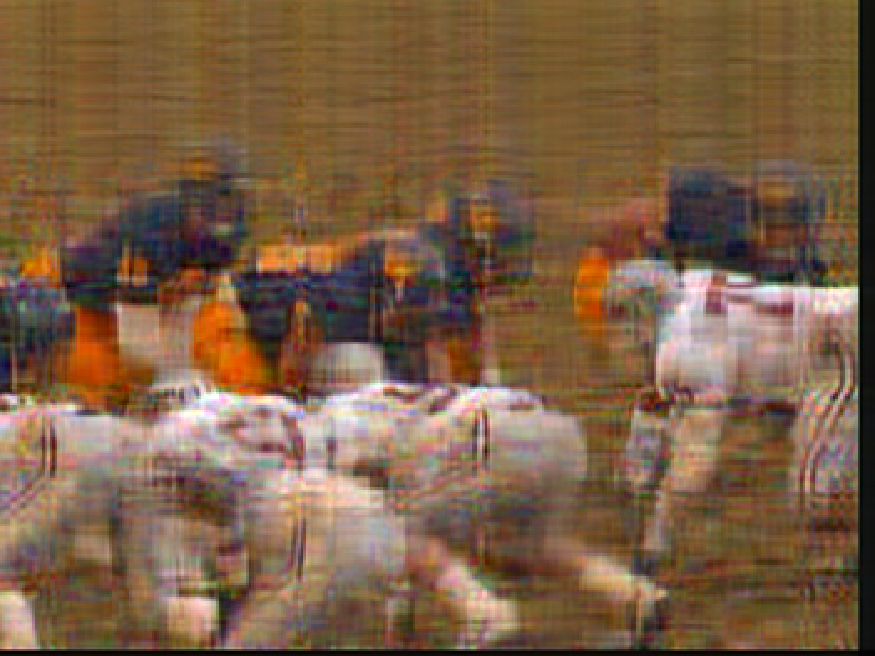}
			
		\end{minipage}
	}
	\subfigure[CoR-QURV $K=18$]{
		\begin{minipage}{4.cm}
			\centering
			\includegraphics[width = 4.cm]{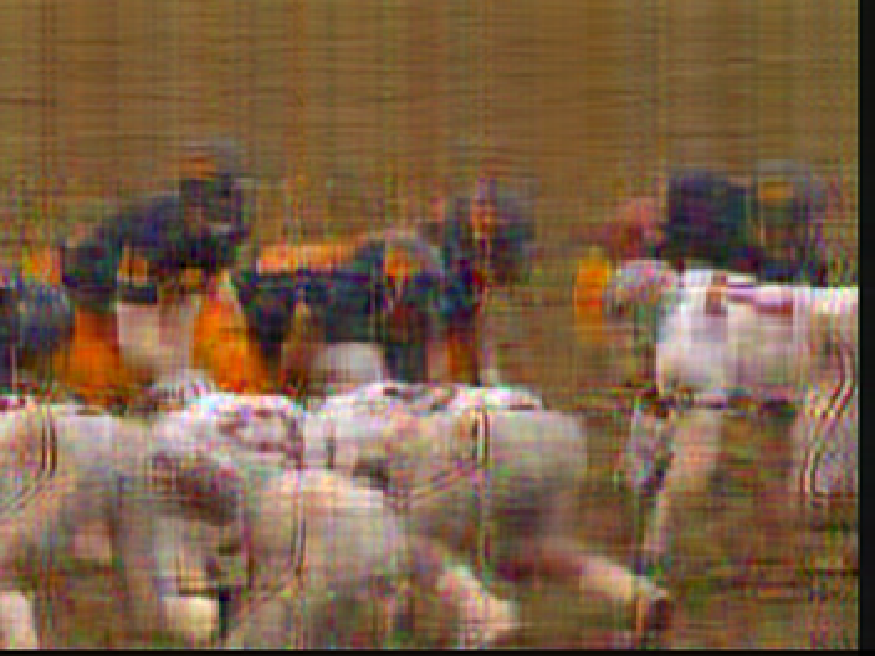}
		\end{minipage}
	}\\
	\subfigure[Original]{
		\begin{minipage}{4.cm}
			\centering
			\includegraphics[width = 4.cm]{ball1.png}
			
		\end{minipage}
	}
	\subfigure[randQSVD $K=36$]{
		\begin{minipage}{4.cm}
			\centering
			\includegraphics[width = 4.cm]{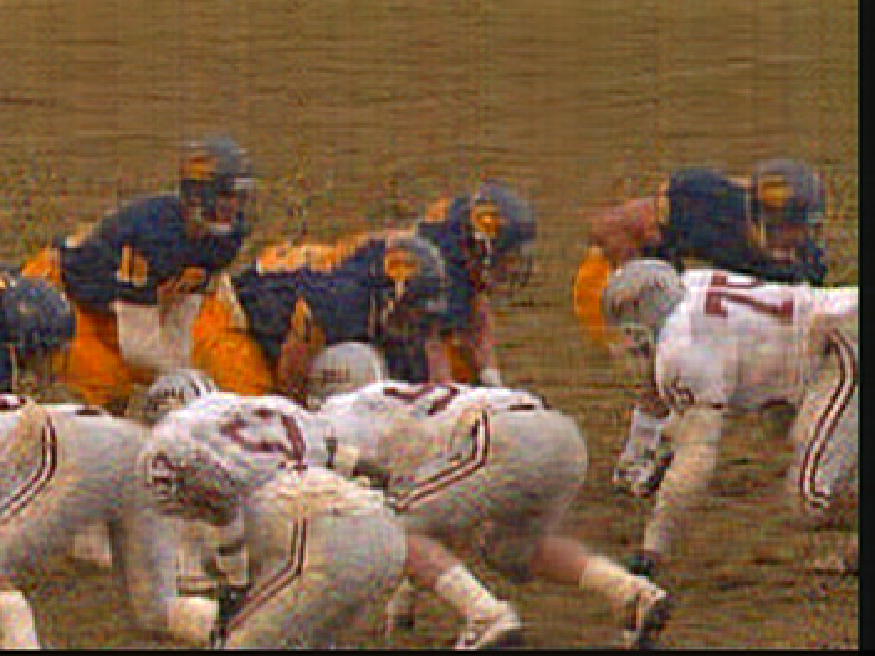}
			
		\end{minipage}
	}
	\subfigure[CoR-QURV $K=36$]{
		\begin{minipage}{4.cm}
			\centering
			\includegraphics[width = 4.cm]{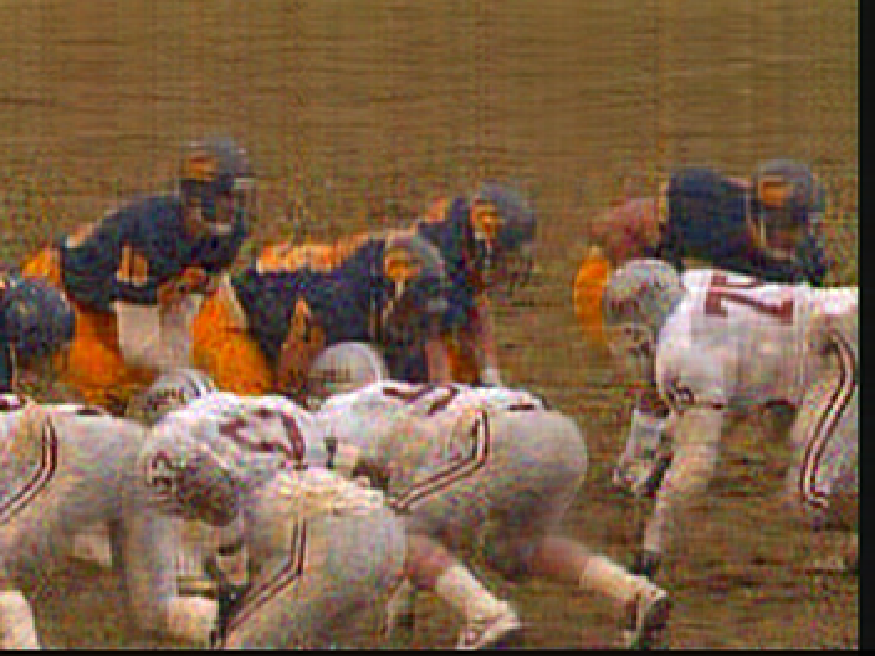}
		\end{minipage}
	}
	\caption{The TQt-rank-K approximation results of  ``Football" (the first frame) by utilizing CoR-QTURV and randQTSVD.}\label{f11}
\end{figure}

\begin{figure}[htpb]	
	\centering
	\subfigure[Original]{
		\begin{minipage}{4.cm}
			\centering
			\includegraphics[width = 4.cm]{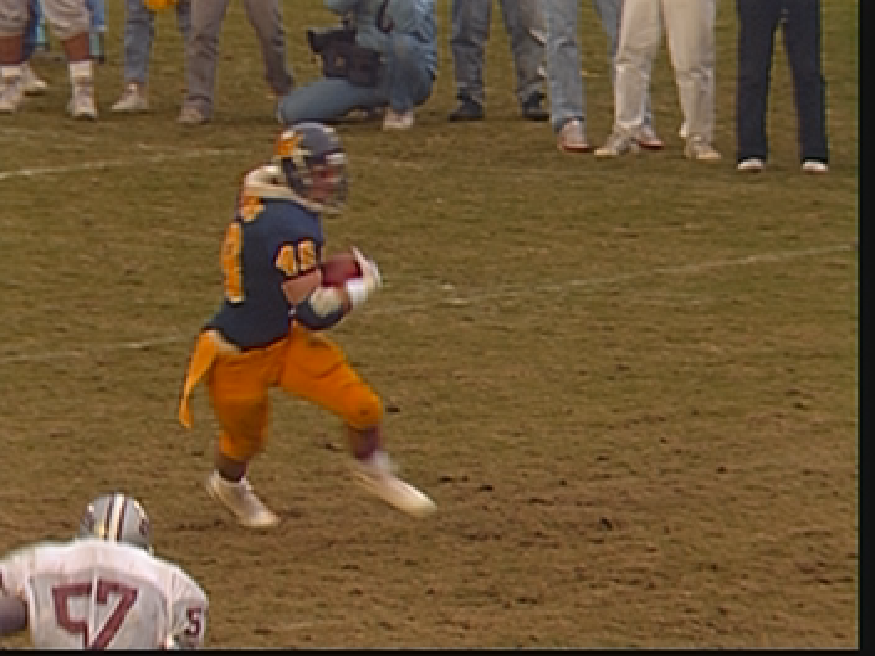}
			
		\end{minipage}
	}
	\subfigure[randQSVD $K=6$]{
		\begin{minipage}{4.cm}
			\centering
			\includegraphics[width = 4.cm]{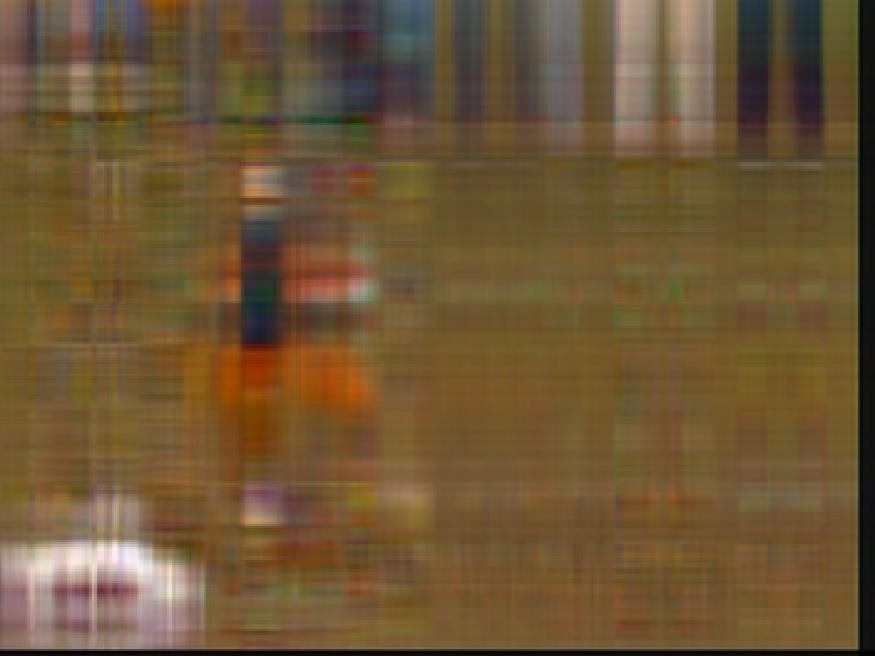}
			
		\end{minipage}
	}
	\subfigure[CoR-QURV $K=6$]{
		\begin{minipage}{4.cm}
			\centering
			\includegraphics[width = 4.cm]{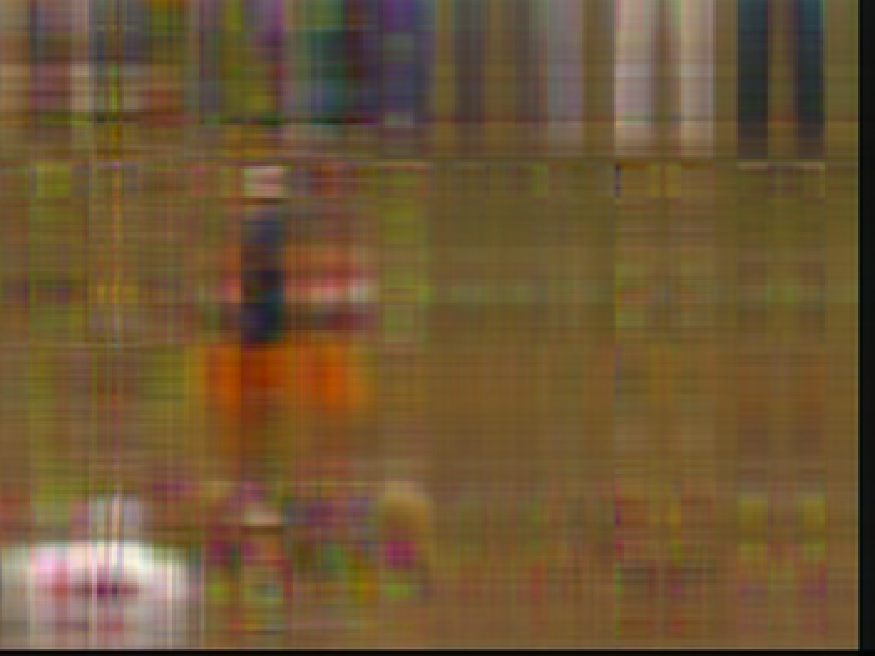}
		\end{minipage}
	}\\
	\subfigure[Original]{
		\begin{minipage}{4.cm}
			\centering
			\includegraphics[width = 4.cm]{ball125.png}
			
		\end{minipage}
	}
	\subfigure[randQSVD $K=12$]{
		\begin{minipage}{4.cm}
			\centering
			\includegraphics[width = 4.cm]{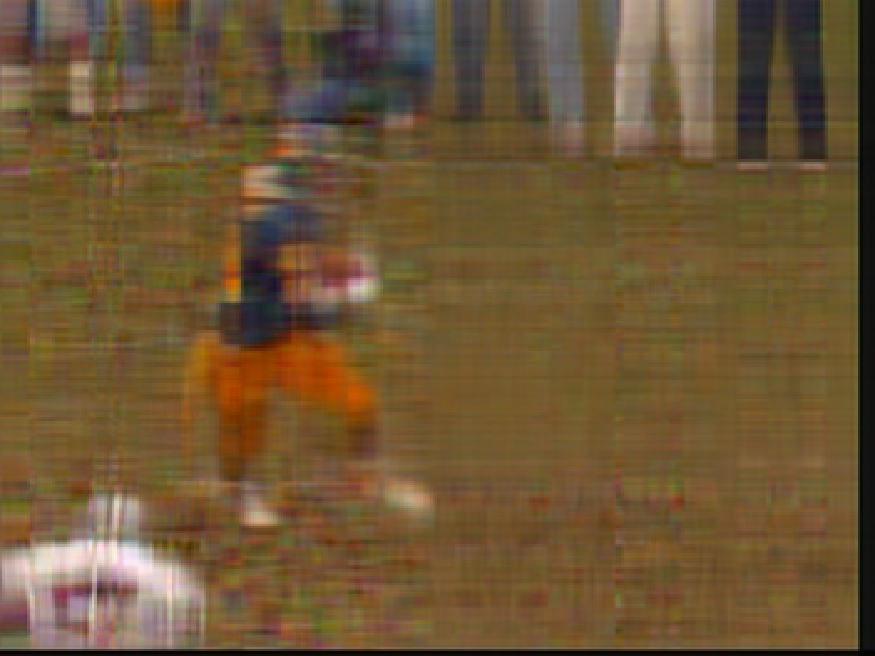}
			
		\end{minipage}
	}
	\subfigure[CoR-QURV $K=12$]{
		\begin{minipage}{4.cm}
			\centering
			\includegraphics[width = 4.cm]{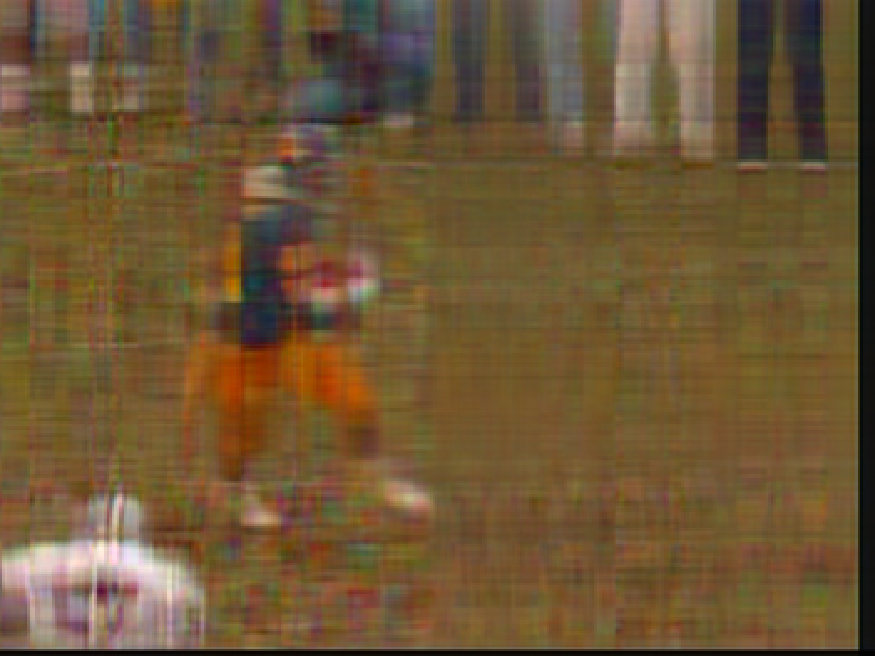}
		\end{minipage}
	}\\
	\subfigure[Original]{
		\begin{minipage}{4.cm}
			\centering
			\includegraphics[width = 4.cm]{ball125.png}
			
		\end{minipage}
	}
	\subfigure[randQSVD $K=18$]{
		\begin{minipage}{4.cm}
			\centering
			\includegraphics[width = 4.cm]{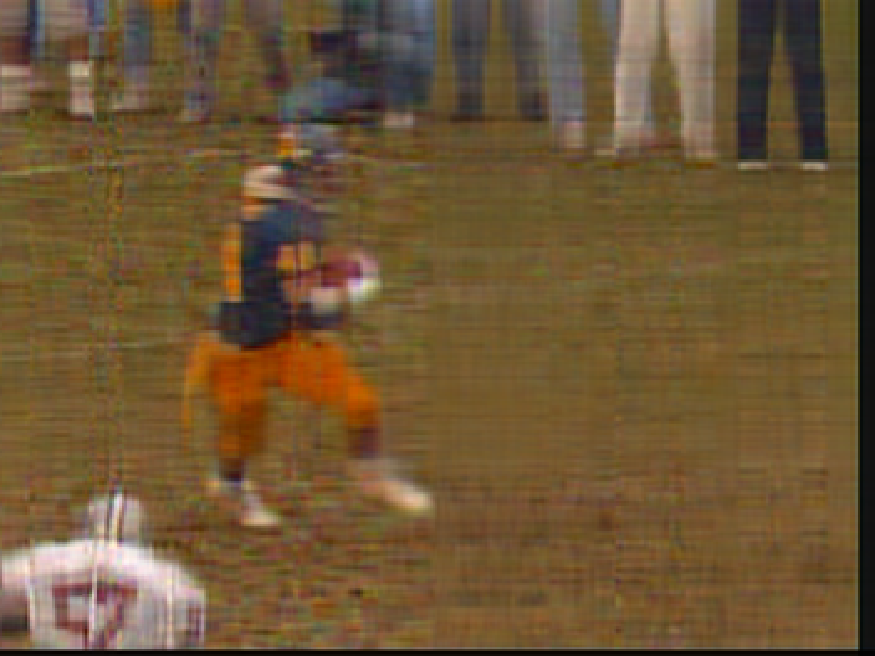}
			
		\end{minipage}
	}
	\subfigure[CoR-QURV $K=18$]{
		\begin{minipage}{4.cm}
			\centering
			\includegraphics[width = 4.cm]{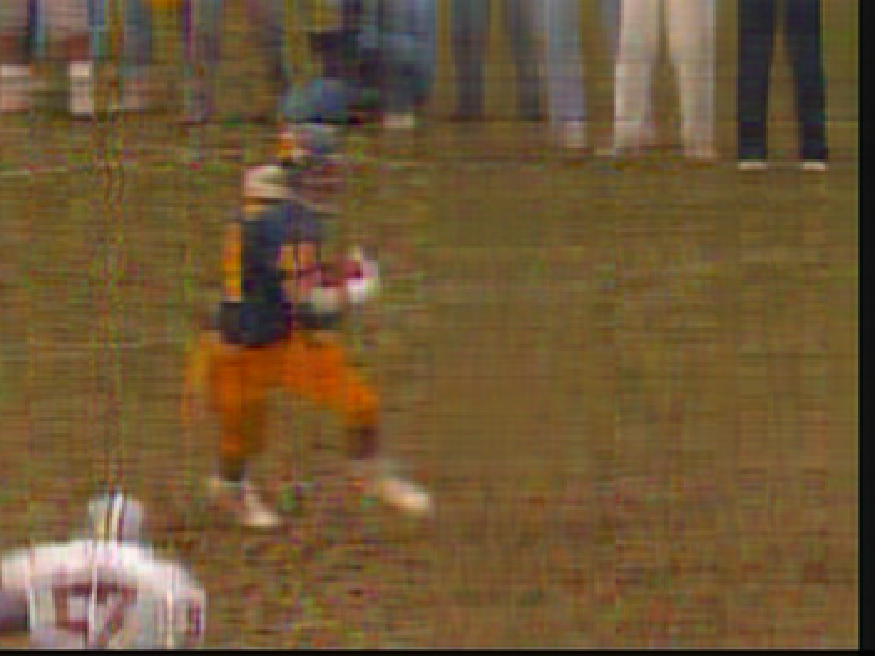}
		\end{minipage}
	}\\
	\subfigure[Original]{
		\begin{minipage}{4.cm}
			\centering
			\includegraphics[width = 4.cm]{ball125.png}
			
		\end{minipage}
	}
	\subfigure[randQSVD $K=36$]{
		\begin{minipage}{4.cm}
			\centering
			\includegraphics[width = 4.cm]{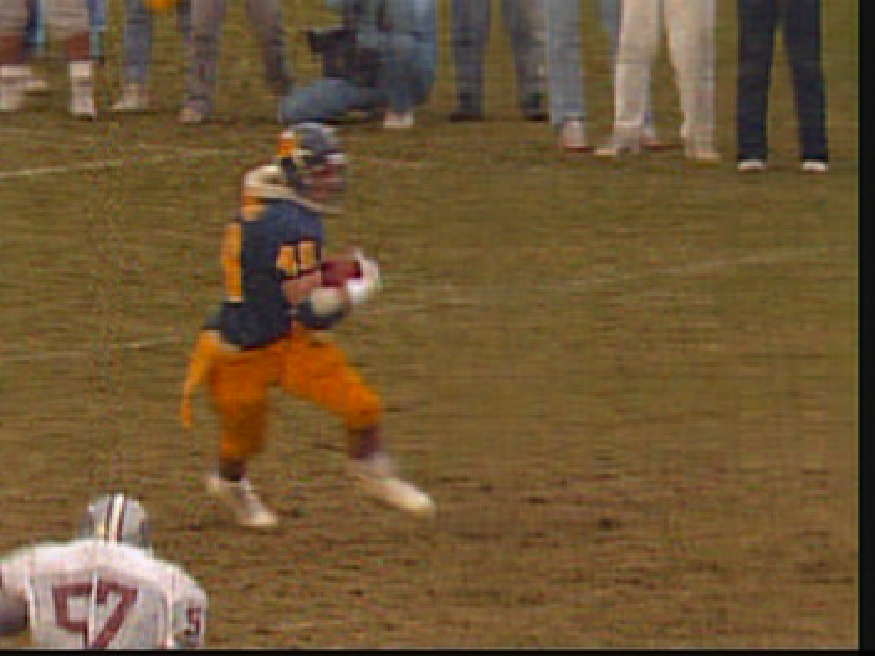}
			
		\end{minipage}
	}
	\subfigure[CoR-QURV $K=36$]{
		\begin{minipage}{4.cm}
			\centering
			\includegraphics[width = 4.cm]{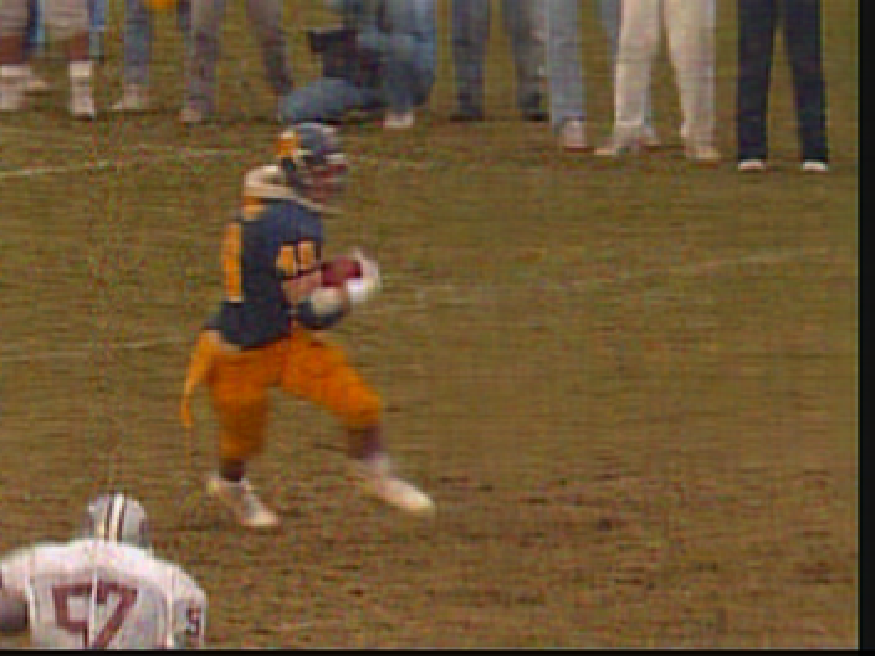}
		\end{minipage}
	}
	\caption{The TQt-rank-K approximation results of ``Football" (the last frame) by utilizing CoR-QTURV and randQTSVD. }\label{f12}
\end{figure}

The numbers on jerseys in the frame are roughly visible, and the visual impact of the original frame and the recovered frame is minimal when $K=36$, as evidenced by the visual results in Figure \ref{f11}-\ref{f12}.  The original video can be approximately restored by both CoR-QTURV and randQTSVD algorithms when the TQt-rank is small, and there is not a significant visual difference between them.
\section{Conclusion}
\label{C}
This paper proposes a novel decomposition for quaternion matrix and quaternion tensor. Firstly, the definition of QTUTV is provided, followed by the definition of this decomposition for quaternion tensor. In order to enhance the efficiency of the algorithms, a randomized technique is employed, resulting in the development of the CoR-QURV and CoR-QTURV. The error bound analysis is presented for randomized approaches. Finally, the experiment conducted on synthetic data, color images, and color videos provides evidence of the effectiveness of the proposed algorithms. Additionally, the developed quaternion-based decomposition can be used for the low-rank quaternion tensor completion model or quaternion tensor robust component analysis problem. The quaternion-based UTV decomposition can be considered as an alternative to the truncated QSVD in certain problems. 
\appendix
\section{Proof of  Theorem \ref{p1}}
To prove Proposition 1, we first present several key results for using later on.
\begin{proposition}	\cite{DBLP:journals/siamrev/HalkoMT11}
	Let $h$ be a real valued Lipschitz function on matrices, then for all $\mathbf{X}$:
	\begin{equation}
		\mid h(\mathbf{X})-h(\mathbf{Y})\mid\leq L\|\mathbf{X}-\mathbf{Y}\|_F, \quad 
	\end{equation}
	where $L>0$. For any $m\times n$ standard Gaussian matrix $\mathbf{G}$ and $u>0$, then $\mathbb{P}\{h(\mathbf{G})\geq\mathbb{E}h(\mathbf{G})+Lu\}\leq e^{-u^2/2}$.
	\label{p3}
\end{proposition}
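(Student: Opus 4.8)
The plan is to recognize the final statement as the classical Gaussian concentration inequality for Lipschitz functions and reproduce its standard proof. First I would identify the $m\times n$ standard Gaussian matrix $\mathbf{G}$ with the vector $\mathbf{g}\in\mathbb{R}^{mn}$ obtained by stacking its entries, which are i.i.d.\ standard normals; under this identification the Frobenius norm of a matrix equals the Euclidean norm of its vectorization, so $h$ becomes an $L$-Lipschitz function on $\mathbb{R}^{mn}$ with respect to the Euclidean norm and $\mathbf{g}$ follows the standard Gaussian measure $\gamma_{mn}$. The goal then reduces to the sub-Gaussian moment generating function bound
\begin{equation*}
\mathbb{E}\,e^{\lambda(h(\mathbf{g})-\mathbb{E}h(\mathbf{g}))}\leq e^{L^2\lambda^2/2}\qquad(\lambda>0),
\end{equation*}
after which the stated tail bound follows by a Chernoff argument.

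The central tool is the Gaussian logarithmic Sobolev inequality: for the standard Gaussian measure on $\mathbb{R}^N$ and any smooth $f$ with sufficient integrability,
\begin{equation*}
\mathbb{E}[f^2\log f^2]-\mathbb{E}[f^2]\log\mathbb{E}[f^2]\leq 2\,\mathbb{E}\|\nabla f\|^2.
\end{equation*}
I would first assume $h$ is smooth, reducing to the general case by mollification at the end, so that $\|\nabla h\|\le L$ pointwise. Applying the log-Sobolev inequality to $f=e^{\lambda h/2}$, for which $\|\nabla f\|^2=\tfrac{\lambda^2}{4}\|\nabla h\|^2 f^2\le \tfrac{\lambda^2 L^2}{4}f^2$, and writing $\psi(\lambda)=\log\mathbb{E}\,e^{\lambda(h-\mathbb{E}h)}$ for the centered cumulant generating function, a short computation turns the inequality into the differential inequality
\begin{equation*}
\lambda\psi'(\lambda)-\psi(\lambda)\leq \tfrac{1}{2}L^2\lambda^2,
\end{equation*}
equivalently $\bigl(\psi(\lambda)/\lambda\bigr)'\le L^2/2$. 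Integrating from $0$ and using $\psi(\lambda)/\lambda\to\psi'(0)=0$ as $\lambda\to 0^+$ yields $\psi(\lambda)\le L^2\lambda^2/2$, which is exactly the sub-Gaussian bound of the first paragraph.

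To finish, I would apply Markov's inequality: for every $\lambda>0$,
\begin{equation*}
\mathbb{P}\{h(\mathbf{G})-\mathbb{E}h(\mathbf{G})\ge Lu\}\le e^{-\lambda Lu}\,\mathbb{E}\,e^{\lambda(h-\mathbb{E}h)}\le \exp\!\Bigl(-\lambda Lu+\tfrac{1}{2}L^2\lambda^2\Bigr),
\end{equation*}
and optimize the exponent over $\lambda>0$; the minimum is attained at $\lambda=u/L$ and equals $-u^2/2$, giving the claimed bound $e^{-u^2/2}$.

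The main obstacle is the regularity gap: a Lipschitz function need not be differentiable, so the log-Sobolev step does not apply to $h$ directly. I would close this by approximating $h$ with the mollifications $h_\varepsilon=h*\phi_\varepsilon$, which are smooth, satisfy $\|\nabla h_\varepsilon\|_\infty\le L$, inherit the same Lipschitz constant, and converge to $h$ uniformly on compact sets; passing $\varepsilon\to 0$ with dominated convergence transfers both the moment generating function bound and the expectation to $h$. A secondary technical point is justifying that $\mathbb{E}\,e^{\lambda h}<\infty$, so that $\psi$ is finite and smooth with differentiation under the integral sign valid; this follows from the linear growth bound $|h(\mathbf{g})|\le |h(0)|+L\|\mathbf{g}\|$ together with the finiteness of Gaussian exponential moments of $\|\mathbf{g}\|$. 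Alternatively, the entire argument can be replaced by a direct appeal to the Gaussian isoperimetric inequality, which yields the same tail bound without any smoothing but takes the isoperimetric estimate as its input.
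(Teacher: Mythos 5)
The paper offers no proof of this proposition: it is quoted verbatim, with citation, from Halko--Martinsson--Tropp, who in turn refer the reader to the Gaussian concentration-of-measure literature (Borell's isoperimetric inequality, as presented in Ledoux). So there is nothing internal to compare against; your task was effectively to supply the classical proof, and you have done so correctly. Your route is the Herbst argument: vectorizing $\mathbf{G}$ so that $\|\cdot\|_F$ becomes the Euclidean norm on $\mathbb{R}^{mn}$ is legitimate since the entries are i.i.d.\ $N(0,1)$; applying the Gaussian log-Sobolev inequality to $f=e^{\lambda h/2}$ with $\|\nabla h\|\le L$ does yield $\lambda\psi'(\lambda)-\psi(\lambda)\le \tfrac12 L^2\lambda^2$, and integrating $\bigl(\psi(\lambda)/\lambda\bigr)'\le L^2/2$ from $0^+$ with $\psi(\lambda)/\lambda\to\psi'(0)=0$ gives the sub-Gaussian bound $\psi(\lambda)\le L^2\lambda^2/2$; Chernoff at $\lambda=u/L$ then produces exactly the stated tail $e^{-u^2/2}$, with the correct constant and the mean (not the median) as centering. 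The two technical caveats you identify are the genuine ones, and your fixes are sound: mollification preserves the Lipschitz constant and in fact satisfies $|h_\varepsilon-h|\le L\varepsilon$ uniformly, so the MGF bound and the expectation pass to the limit by dominated convergence using the linear-growth envelope $|h(\mathbf{g})|\le|h(0)|+L\|\mathbf{g}\|$, which also justifies finiteness and differentiability of $\psi$. The difference from the cited source is one of method: the reference rests on the Gaussian isoperimetric inequality (your closing alternative), which takes a deep geometric theorem as a black box but needs no smoothing, whereas your log-Sobolev/Herbst derivation is analytically self-contained modulo the log-Sobolev inequality and extends to any measure satisfying one. Either way the proposition holds as stated, and your argument is a complete and correct substitute for the citation.
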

\begin{proposition}\cite{DBLP:journals/siamsc/LiuLJ22}
	For  $\dot{\mathbf{A}}=\mathbf{A}_0+\mathbf{A}_1i+\mathbf{A}_2j+\mathbf{A}_3k\in\mathbb{H}^{M\times N}$, define the real counterpart and the column representation respectively as follows\\ $\gamma_{\dot{\mathbf{A}}}=\left[\begin{array}{cccc}
		\mathbf{A}_0& -\mathbf{A}_1&-\mathbf{A}_2&-\mathbf{A}_3 \\
		\mathbf{A}_1& \mathbf{A}_0&-\mathbf{A}_3&\mathbf{A}_2\\
		\mathbf{A}_2& \mathbf{A}_3&\mathbf{A}_0&-\mathbf{A}_1\\
		\mathbf{A}_3& -\mathbf{A}_2&\mathbf{A}_1&\mathbf{A}_0\end{array}\right]$, \qquad $\dot{\mathbf{A}}_C=\left[ \begin{array}{cccc}\mathbf{A}_0\\ \mathbf{A}_1\\ \mathbf{A}_2\\ \mathbf{A}_3\end{array}\right]$. \\Then $\|\dot{\mathbf{A}}\|_F=\frac{1}{2}\|\gamma_{\dot{\mathbf{A}}}\|_F$ and $\|\dot{\mathbf{A}}\|_2=\|\dot{\mathbf{A}}_C\|_2$.
	\label{p4}
\end{proposition}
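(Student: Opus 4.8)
The statement bundles two identities relating the quaternion matrix $\dot{\mathbf{A}}$ to its real counterpart $\gamma_{\dot{\mathbf{A}}}$ and its column representation $\dot{\mathbf{A}}_C$, and I would treat them separately, since the Frobenius identity is pure bookkeeping while the spectral identity carries the real content. The plan is to first record the two structural properties of the map $\dot{\mathbf{A}}\mapsto\gamma_{\dot{\mathbf{A}}}$ that drive everything: it is multiplicative, $\gamma_{\dot{\mathbf{A}}\dot{\mathbf{B}}}=\gamma_{\dot{\mathbf{A}}}\gamma_{\dot{\mathbf{B}}}$, and it intertwines conjugate transpose with real transpose, $\gamma_{\dot{\mathbf{A}}^H}=\gamma_{\dot{\mathbf{A}}}^T$. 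Both follow once one checks that $\gamma_{\dot{\mathbf{A}}}$ is precisely the matrix of the real-linear map $\dot{\mathbf{x}}\mapsto\dot{\mathbf{A}}\dot{\mathbf{x}}$ expressed in the coordinates $\dot{\mathbf{x}}\mapsto\dot{\mathbf{x}}_C$, i.e. $(\dot{\mathbf{A}}\dot{\mathbf{x}})_C=\gamma_{\dot{\mathbf{A}}}\dot{\mathbf{x}}_C$; composition of maps then yields multiplicativity, and a short blockwise computation using the quaternion multiplication table yields the transpose identity.

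For the Frobenius identity I would expand $\|\dot{\mathbf{A}}\|_F^2=\sum_{n=0}^3\|\mathbf{A}_n\|_F^2$ and observe that in the $4\times4$ block layout of $\gamma_{\dot{\mathbf{A}}}$ each of the real blocks $\mathbf{A}_0,\mathbf{A}_1,\mathbf{A}_2,\mathbf{A}_3$ appears exactly once in each of the four block rows, hence four times in all (the signs are irrelevant to the Frobenius norm). Summing entrywise gives $\|\gamma_{\dot{\mathbf{A}}}\|_F^2=4\sum_{n=0}^3\|\mathbf{A}_n\|_F^2=4\|\dot{\mathbf{A}}\|_F^2$, which is the asserted factor $\frac{1}{2}$.

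The spectral identity is the step I expect to be the genuine obstacle. My plan is to reduce it to an eigenvalue statement: since $\dot{\mathbf{B}}:=\dot{\mathbf{A}}^H\dot{\mathbf{A}}$ is Hermitian positive semidefinite we have $\|\dot{\mathbf{A}}\|_2^2=\lambda_{\max}(\dot{\mathbf{B}})$, while the two structural properties give $\gamma_{\dot{\mathbf{A}}}^T\gamma_{\dot{\mathbf{A}}}=\gamma_{\dot{\mathbf{B}}}$. The crux is then the multiplicity phenomenon for the real counterpart of a Hermitian matrix: diagonalizing $\dot{\mathbf{B}}=\dot{\mathbf{V}}\mathbf{\Lambda}\dot{\mathbf{V}}^H$ by the quaternion spectral theorem (a consequence of the QSVD in Theorem \ref{theorem1}) and applying multiplicativity to $\gamma_{\dot{\mathbf{V}}}$, one shows each eigenvalue of $\dot{\mathbf{B}}$ is an eigenvalue of $\gamma_{\dot{\mathbf{B}}}$ with multiplicity four, so $\lambda_{\max}(\gamma_{\dot{\mathbf{B}}})=\lambda_{\max}(\dot{\mathbf{B}})$ and hence $\|\gamma_{\dot{\mathbf{A}}}\|_2=\|\dot{\mathbf{A}}\|_2$. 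The delicate remaining point is the passage from $\gamma_{\dot{\mathbf{A}}}$ to the column representation $\dot{\mathbf{A}}_C$: the first block column of $\gamma_{\dot{\mathbf{A}}}$ is exactly $\dot{\mathbf{A}}_C$ while the other three are signed-permutation images $\mathbf{P}_i\dot{\mathbf{A}}_C,\;\mathbf{P}_j\dot{\mathbf{A}}_C,\;\mathbf{P}_k\dot{\mathbf{A}}_C$ of it, and relating $\|\dot{\mathbf{A}}_C\|_2$ to $\|\gamma_{\dot{\mathbf{A}}}\|_2$ through these relations is the one comparison that does not reduce to routine algebra and must be argued carefully in the precise framework of \cite{DBLP:journals/siamsc/LiuLJ22}; this is where I would concentrate the effort.
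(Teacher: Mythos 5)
First, a structural point: the paper does not prove this proposition at all --- it is imported by citation from \cite{DBLP:journals/siamsc/LiuLJ22} --- so there is no in-paper argument to compare yours against, and your attempt has to stand on its own. Your Frobenius half does: each block $\mathbf{A}_n$ occurs exactly four times up to sign in $\gamma_{\dot{\mathbf{A}}}$, so $\|\gamma_{\dot{\mathbf{A}}}\|_F^2=4\sum_{n}\|\mathbf{A}_n\|_F^2=4\|\dot{\mathbf{A}}\|_F^2$. For the intermediate identity $\|\dot{\mathbf{A}}\|_2=\|\gamma_{\dot{\mathbf{A}}}\|_2$ you also do not need the spectral theorem or the multiplicity-four eigenvalue count: every vector of $\mathbb{R}^{4N}$ is $\dot{\mathbf{x}}_C$ for a unique $\dot{\mathbf{x}}\in\mathbb{H}^{N}$, and $(\dot{\mathbf{A}}\dot{\mathbf{x}})_C=\gamma_{\dot{\mathbf{A}}}\dot{\mathbf{x}}_C$ together with $\|\dot{\mathbf{x}}_C\|_2=\|\dot{\mathbf{x}}\|_2$ shows the two operator norms are suprema of the same quantity over the same set.

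The genuine gap is exactly the step you defer to the end, and it cannot be closed by ``concentrating the effort'' there: the passage from $\|\gamma_{\dot{\mathbf{A}}}\|_2$ to $\|\dot{\mathbf{A}}_C\|_2$ is not delicate but false in general. Since $\dot{\mathbf{A}}_C$ is the first block column of $\gamma_{\dot{\mathbf{A}}}$, a submatrix argument gives only $\|\dot{\mathbf{A}}_C\|_2\leq\|\gamma_{\dot{\mathbf{A}}}\|_2=\|\dot{\mathbf{A}}\|_2$; equality would require the operator norm of $\dot{\mathbf{A}}$ to be attained at a vector with real entries, because $\dot{\mathbf{A}}_C\mathbf{y}=(\dot{\mathbf{A}}\mathbf{y})_C$ for $\mathbf{y}\in\mathbb{R}^{N}$, i.e.\ $\|\dot{\mathbf{A}}_C\|_2=\max\{\|\dot{\mathbf{A}}\mathbf{y}\|_2:\mathbf{y}\in\mathbb{R}^{N},\|\mathbf{y}\|_2=1\}$. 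This fails already for $\dot{\mathbf{A}}=\left(\begin{smallmatrix}1&i\\-i&1\end{smallmatrix}\right)$: the unit vector $\dot{\mathbf{x}}=\frac{1}{\sqrt{2}}(1,-i)^{T}$ gives $\dot{\mathbf{A}}\dot{\mathbf{x}}=2\dot{\mathbf{x}}$, so $\|\dot{\mathbf{A}}\|_2=2$, whereas $\dot{\mathbf{A}}_C^{T}\dot{\mathbf{A}}_C=\mathbf{A}_0^{T}\mathbf{A}_0+\mathbf{A}_1^{T}\mathbf{A}_1=2\mathbf{I}_2$ gives $\|\dot{\mathbf{A}}_C\|_2=\sqrt{2}$. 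So the second identity, as transcribed in this proposition, cannot be proved by any argument; either the source states it under a different definition of the column representation or with additional hypotheses, and that discrepancy has to be resolved before the step can be supplied. (The downstream uses in Theorem~\ref{th1.5} and Proposition~\ref{p1} would survive with the one-sided bound $\|\dot{\mathbf{A}}_C\|_2\leq\|\dot{\mathbf{A}}\|_2\leq 2\|\dot{\mathbf{A}}_C\|_2$, but that is a different statement from the one you are asked to prove.)
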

\begin{proposition}
	For $\dot{\mathbf{\Omega}}=\mathbf{\Omega}_0+\mathbf{\Omega}_1i+\mathbf{\Omega_2}j+\mathbf{\Omega_3}k$, where $\mathbf{\Omega}_i, i=0, 1, 2,3$ are random and independently Gaussian random matrices. Define a function $h(\dot{\mathbf{\Omega}})=\|\dot{\mathbf{\Omega}}\|_2$. Then by utilizing Lemma 6 in \cite{DBLP:journals/siamsc/LiuLJ22}, we have $\mathbb{E}(h(\dot{\mathbf{\Omega}})\leq3(\sqrt{M}+\sqrt{N})<3(\sqrt{M}+\sqrt{N})+1\triangleq\epsilon$.
	\label{pr5}
\end{proposition}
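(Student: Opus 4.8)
The plan is to reduce the spectral norm of the quaternion Gaussian matrix $\dot{\mathbf{\Omega}}$ to the spectral norm of an ordinary real Gaussian matrix, and then to invoke the classical expectation bound for the largest singular value of a Gaussian matrix (which is precisely the content of Lemma 6 in \cite{DBLP:journals/siamsc/LiuLJ22}). The norm identity $\|\dot{\mathbf{A}}\|_2=\|\dot{\mathbf{A}}_C\|_2$ recorded in Proposition \ref{p4} is what makes this reduction possible, since it converts a quaternionic quantity into one governed by the usual real random-matrix theory.

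First I would apply Proposition \ref{p4} to write $h(\dot{\mathbf{\Omega}})=\|\dot{\mathbf{\Omega}}\|_2=\|\dot{\mathbf{\Omega}}_C\|_2$, where $\dot{\mathbf{\Omega}}_C=[\mathbf{\Omega}_0^T,\ \mathbf{\Omega}_1^T,\ \mathbf{\Omega}_2^T,\ \mathbf{\Omega}_3^T]^T\in\mathbb{R}^{4M\times N}$ is the column representation. The essential observation is that $\dot{\mathbf{\Omega}}_C$ is itself a bona fide real standard Gaussian matrix: its $4MN$ entries are exactly the entries of $\mathbf{\Omega}_0,\mathbf{\Omega}_1,\mathbf{\Omega}_2,\mathbf{\Omega}_3$, which are independent $N(0,1)$ by hypothesis. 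I would then apply the Gordon/Davidson--Szarek estimate $\mathbb{E}\|\mathbf{G}\|_2\le\sqrt{p}+\sqrt{q}$ for a $p\times q$ standard Gaussian matrix $\mathbf{G}$, taking $p=4M$ and $q=N$, which gives
\begin{equation*}
\mathbb{E}\,h(\dot{\mathbf{\Omega}})=\mathbb{E}\|\dot{\mathbf{\Omega}}_C\|_2\le\sqrt{4M}+\sqrt{N}=2\sqrt{M}+\sqrt{N}.
\end{equation*}
Since $2\sqrt{M}+\sqrt{N}\le 3(\sqrt{M}+\sqrt{N})$, the stated bound $\mathbb{E}\,h(\dot{\mathbf{\Omega}})\le 3(\sqrt{M}+\sqrt{N})$ follows, and the strict inequality $3(\sqrt{M}+\sqrt{N})<3(\sqrt{M}+\sqrt{N})+1=\epsilon$ is immediate.

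The argument is short, so there is no serious obstacle; the only point requiring care is the constant. The column-representation route above in fact yields the sharper $2\sqrt{M}+\sqrt{N}$, so relaxing to $3(\sqrt{M}+\sqrt{N})$ is purely a matter of matching the generic form of Lemma 6 in \cite{DBLP:journals/siamsc/LiuLJ22}, which I could alternatively cite as a black box to conclude directly. If one instead argues through the $4M\times 4N$ real counterpart $\gamma_{\dot{\mathbf{\Omega}}}$, the naive triangle inequality over its four component blocks only delivers the constant $4$; pushing the constant below $3$ then requires the tighter complex-representation splitting $\dot{\mathbf{\Omega}}=\mathbf{C}_1+\mathbf{C}_2 j$ into two complex Gaussian blocks, which is the underlying reason the stated constant is $3$ rather than $4$. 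Finally, I note that Proposition \ref{p3} (Lipschitz concentration) is not needed for this expectation bound itself; it is the companion tail estimate, and the $1$-Lipschitz property of $\dot{\mathbf{\Omega}}\mapsto\|\dot{\mathbf{\Omega}}\|_2$ would enter only if one wanted the high-probability version of the claim.
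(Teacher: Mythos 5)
Your reduction has a genuine gap at its very first step: the identity $\|\dot{\mathbf{\Omega}}\|_2=\|\dot{\mathbf{\Omega}}_C\|_2$ is false for the stacked real matrix $\dot{\mathbf{\Omega}}_C\in\mathbb{R}^{4M\times N}$, and it fails in precisely the direction you need. Since $\dot{\mathbf{A}}_C$ is the first block column of the real counterpart $\gamma_{\dot{\mathbf{A}}}$, the true relations are $\|\dot{\mathbf{A}}_C\|_2\leq\|\dot{\mathbf{A}}\|_2=\|\gamma_{\dot{\mathbf{A}}}\|_2\leq 2\|\dot{\mathbf{A}}_C\|_2$ (the last step because $\gamma_{\dot{\mathbf{A}}}=[\mathbf{J}_0\dot{\mathbf{A}}_C\;\mathbf{J}_1\dot{\mathbf{A}}_C\;\mathbf{J}_2\dot{\mathbf{A}}_C\;\mathbf{J}_3\dot{\mathbf{A}}_C]$ with orthogonal $\mathbf{J}_i$); it is the Frobenius norm, not the spectral norm, that the column representation preserves. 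A one-line counterexample is $\dot{\mathbf{A}}=[1,\ i]$: then $\dot{\mathbf{A}}\dot{\mathbf{A}}^H=2$, so $\|\dot{\mathbf{A}}\|_2=\sqrt{2}$, while $\dot{\mathbf{A}}_C$ has orthonormal columns and norm $1$. (Proposition \ref{p4} as printed does assert the equality, but it is garbled; note that the paper's Appendix never uses it that way --- there the column representation serves only as a real parametrization, through $h(\dot{\mathbf{A}}_C)=\|\gamma_{\dot{\mathbf{A}}}\|_2$.) Because $\|\dot{\mathbf{\Omega}}_C\|_2\leq\|\dot{\mathbf{\Omega}}\|_2$, the Davidson--Szarek bound on the genuine $4M\times N$ Gaussian $\dot{\mathbf{\Omega}}_C$ controls the wrong side, and your intermediate conclusion $\mathbb{E}\|\dot{\mathbf{\Omega}}\|_2\leq 2\sqrt{M}+\sqrt{N}$ is demonstrably false: already for $M=1$ the quaternion row has $\|\dot{\mathbf{\Omega}}\|_2=\|\dot{\mathbf{\Omega}}\|_F$ with $\mathbb{E}\|\dot{\mathbf{\Omega}}\|_F\geq\sqrt{4N-1}$, which exceeds $2+\sqrt{N}$ for all $N\geq 5$; more generally $\mathbb{E}\|\dot{\mathbf{\Omega}}\|_2$ scales like $2(\sqrt{M}+\sqrt{N})$ since $\mathbb{E}|\dot{q}|^2=4$ per entry. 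The fact that your route seemed to yield something ``sharper'' than the stated constant was the warning sign: it undercuts the true expectation, so no tightening of constants can rescue the argument as written.

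The proof can be repaired in two ways, both of which you brush against. First, using the correct two-sided inequality, your route gives $\mathbb{E}\|\dot{\mathbf{\Omega}}\|_2\leq 2\,\mathbb{E}\|\dot{\mathbf{\Omega}}_C\|_2\leq 4\sqrt{M}+2\sqrt{N}$, which is $\leq 3(\sqrt{M}+\sqrt{N})$ exactly when $M\leq N$ --- the standing hypothesis in Propositions \ref{p1} and \ref{p2} --- so the statement survives in the regime where it is applied. Second, the complex splitting $\dot{\mathbf{\Omega}}=\mathbf{C}_1+\mathbf{C}_2 j$ that you relegate to a closing remark is in substance the actual proof: $\|\dot{\mathbf{\Omega}}\|_2\leq\|\mathbf{C}_1\|_2+\|\mathbf{C}_2\|_2$ with each $\mathbf{C}_i$ complex Gaussian whose real and imaginary parts have unit variance, giving $\mathbb{E}\|\dot{\mathbf{\Omega}}\|_2\leq 2\sqrt{2}(\sqrt{M}+\sqrt{N})<3(\sqrt{M}+\sqrt{N})$. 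That bound is what Lemma 6 of \cite{DBLP:journals/siamsc/LiuLJ22} supplies, and the paper offers no independent argument for this proposition beyond that citation; so the correct content was in your aside, while your main argument should be discarded or rewritten along one of these two lines.
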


\begin{proposition}\cite{DBLP:journals/siamsc/Gu15}
	Let $g(\cdot)$be a nonnegative continuously differentiable function with $g(0)=0$, and $\mathbf{G}$ is a random matrix, then
	$ \mathbb{E}_g(\mathbf{G})=\int_0^\infty g^{'}(x)\mathbb{P}\{\|\mathbf{G}\|_2\geq x\}dx$.
	\label{p5}
\end{proposition}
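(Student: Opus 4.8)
The plan is to recognize Proposition \ref{p5} as a layer-cake (tail-integral) identity and to prove it by combining the fundamental theorem of calculus with an interchange of expectation and integration. The matrix structure of $\mathbf{G}$ plays no role beyond supplying the nonnegative scalar random variable $t=\|\mathbf{G}\|_2$, so the entire argument reduces to a statement about the distribution of $\|\mathbf{G}\|_2$ on $[0,\infty)$.

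First I would use the hypotheses $g(0)=0$ and $g\in C^1$ to write, for every fixed $t\ge 0$,
\begin{equation*}
g(t)=\int_0^t g^{'}(x)\,dx=\int_0^\infty g^{'}(x)\,\mathbf{1}\{x\le t\}\,dx,
\end{equation*}
where $\mathbf{1}\{\cdot\}$ denotes the indicator function. Substituting the random value $t=\|\mathbf{G}\|_2$ and using the equivalence $\mathbf{1}\{x\le\|\mathbf{G}\|_2\}=\mathbf{1}\{\|\mathbf{G}\|_2\ge x\}$ gives the pointwise (almost-sure) identity $g(\|\mathbf{G}\|_2)=\int_0^\infty g^{'}(x)\,\mathbf{1}\{\|\mathbf{G}\|_2\ge x\}\,dx$. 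Taking expectations and exchanging $\mathbb{E}$ with $\int_0^\infty$ then yields
\begin{equation*}
\mathbb{E}\,g(\|\mathbf{G}\|_2)=\int_0^\infty g^{'}(x)\,\mathbb{E}\bigl[\mathbf{1}\{\|\mathbf{G}\|_2\ge x\}\bigr]\,dx=\int_0^\infty g^{'}(x)\,\mathbb{P}\{\|\mathbf{G}\|_2\ge x\}\,dx,
\end{equation*}
where the last step uses $\mathbb{E}[\mathbf{1}\{\|\mathbf{G}\|_2\ge x\}]=\mathbb{P}\{\|\mathbf{G}\|_2\ge x\}$. This is exactly the claimed formula.

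The one place that requires care, and the main obstacle, is justifying the interchange of $\mathbb{E}$ and the $dx$-integral. The cleanest route is Tonelli's theorem applied to the nonnegative integrand $g^{'}(x)\mathbf{1}\{\|\mathbf{G}\|_2\ge x\}$ on the product of $[0,\infty)$ with the underlying probability space; this applies immediately whenever $g$ is in addition nondecreasing, so that $g^{'}\ge 0$, which is precisely the situation in the applications, since Propositions \ref{p1} and \ref{p2} invoke the identity with an increasing $g$. For a merely nonnegative $g$ one instead invokes Fubini's theorem, which is valid as soon as $\int_0^\infty |g^{'}(x)|\,\mathbb{P}\{\|\mathbf{G}\|_2\ge x\}\,dx<\infty$; this integrability follows from the decay of the tail probability together with the growth bound on $g$ that holds in every case of interest here. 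I would therefore state the result under the mild additional assumption $g^{'}\ge 0$ (or the integrability condition), under which the Tonelli/Fubini step, and hence the whole identity, is rigorous.
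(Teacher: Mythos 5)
Your proof is correct, and there is nothing in the paper to compare it against in detail: the paper offers no proof of Proposition \ref{p5} at all, importing it verbatim from the cited reference \cite{DBLP:journals/siamsc/Gu15}, where it is established by precisely the route you take — writing $g(\|\mathbf{G}\|_2)=\int_0^\infty g'(x)\,\mathbf{1}\{\|\mathbf{G}\|_2\ge x\}\,dx$ via the fundamental theorem of calculus and $g(0)=0$, then exchanging $\mathbb{E}$ with the $dx$-integral. Your one substantive addition is a genuine improvement rather than a deviation: as stated, the hypotheses ($g\ge 0$, $g\in C^1$, $g(0)=0$) do not force $g'\ge 0$, so the unconditional Tonelli step is unavailable and the right-hand side need not even be well-defined without an integrability assumption; your fix — assuming $g'\ge 0$, or else $\int_0^\infty |g'(x)|\,\mathbb{P}\{\|\mathbf{G}\|_2\ge x\}\,dx<\infty$ for the Fubini version — is exactly what rigor requires, and it costs nothing here because both invocations in the paper (Propositions \ref{p1} and \ref{p2}) use $g(x)=\sqrt{\alpha^2x^2/(1+\beta^2x^2)}$, which is increasing and bounded. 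You also correctly observe that the matrix structure of $\mathbf{G}$ is irrelevant (only the nonnegative random variable $\|\mathbf{G}\|_2$ matters), and you implicitly repair the statement's typo: $\mathbb{E}_g(\mathbf{G})$ should read $\mathbb{E}\,g(\|\mathbf{G}\|_2)$, which is the quantity your argument computes and the quantity actually used later in the appendix.
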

\begin{proof}
	\text{Define }$	h(\dot{\mathbf{A}}_C)=\|\gamma_{\dot{\mathbf{A}}}\|_2=\|[\mathbf{J}_0\dot{\mathbf{A}}_C \;\mathbf{J}_1\dot{\mathbf{A}}_C \; \mathbf{J}_2\dot{\mathbf{A}}_C \; \mathbf{J}_3\dot{\mathbf{A}}_C]\|_2$,
	 where $\mathbf{J}_0=\mathbf{I}_4\otimes\mathbf{I}_M$, $\mathbf{J}_1=[-\mathbf{e}_2\; \mathbf{e}_1 \;\mathbf{e}_4\;-\mathbf{e}_3]\otimes\mathbf{I}_M$, $\mathbf{J}_2=[-\mathbf{e}_3\; -\mathbf{e}_4 \;\mathbf{e}_1\;\mathbf{e}_2]\otimes\mathbf{I}_M$, $\mathbf{J}_3=[-\mathbf{e}_4\; \mathbf{e}_3 \;-\mathbf{e}_2\;\mathbf{e}_1]\otimes\mathbf{I}_M$, with $\mathbf{e}_i$ be the $i$th column of $\mathbf{I}_4$. Utilizing Proposition \ref{p3} and Proposition \ref{p4}, we have
	 \begin{equation*}
	 h(\dot{\mathbf{A}}_C-\dot{\mathbf{B}}_C)=\mid\|\gamma_{\dot{\mathbf{A}}}\|_2-\|\gamma_{\dot{\mathbf{B}}}\|_2\mid=\|\dot{\mathbf{A}}\|_2-\|\dot{\mathbf{B}}\|_2\leq\|\dot{\mathbf{A}}-\dot{\mathbf{B}}\|_2\leq\|\dot{\mathbf{A}}_C-\dot{\mathbf{B}}_C\|_F.
	 \end{equation*}
  That means that $h(\cdot)$ is a Lipschitz function. Next, utilizing Proposition \ref{pr5} with Lipschitz constant $L=1$, we have $\mathbb{P}\{\|\dot{\mathbf{G}}\|_2\geq x\}\leq e^{-u^2/2}$, where $u=x-\epsilon$.
	Then, we first define the real function $g(x)\triangleq\sqrt{\frac{\alpha^2x^2}{1+\beta^2x^2}}$ with $g^{'}(x)=\frac{\alpha^2x^2}{(1+\beta^2x^2)^2\sqrt{\frac{\alpha^2x^2}{1+\beta^2x^2}}}$ , where $\alpha>0$, $\beta>0$. Then we have 
	\begin{equation}\label{a1}
		\begin{split}
			\mathbb{E}\left(\sqrt{\frac{\alpha^2\|\dot{\mathbf{\Omega}}\|_2^2}{1+\beta^2\|\dot{\mathbf{\Omega}}\|_2^2}}\right)&=\mathbb{E}\left(\sqrt{\frac{\alpha^2\|\dot{\mathbf{\Omega}}_C\|_2^2}{1+\beta^2\|\dot{\mathbf{\Omega}}_C\|_2^2}}\right)\\&=\int_0^\infty g^{'}(x)\mathbb{P}\{\|\dot{\mathbf{\Omega}}\|_2\geq x\}dx\\&\leq\sqrt{\frac{\alpha^2\epsilon^2}{1+\beta^2\epsilon^2}}+\int_\epsilon^\infty \frac{\alpha^2x^2}{(1+\beta^2x^2)^2\sqrt{\frac{\alpha^2x^2}{1+\beta^2x^2}}}e^{-\frac{(x-\epsilon)^2}{2}}dx\\&\leq\sqrt{\frac{\alpha^2\epsilon^2}{1+\beta^2\epsilon^2}}+\frac{\alpha^2}{(1+\beta^2\epsilon^2)^2\sqrt{\frac{\alpha^2\epsilon^2}{1+\beta^2\epsilon^2}}}\int_0^\infty (u+\epsilon)e^{-\frac{(u)^2}{2}}du\\&=\sqrt{\frac{\alpha^2\epsilon^2}{1+\beta^2\epsilon^2}}+\frac{\alpha^2}{(1+\beta^2\epsilon^2)^2\sqrt{\frac{\alpha^2\epsilon^2}{1+\beta^2\epsilon^2}}}(1+\epsilon\sqrt{\frac{\pi}{2}}).
		\end{split}
	\end{equation}
	Comparing the inequality in Proposition \ref{p1} with \eqref{a1}, we need to find a constant $v>0$ such that 
	\begin{equation}\label{a2}
		\sqrt{\frac{\alpha^2\epsilon^2}{1+\beta^2\epsilon^2}}+\frac{\alpha^2}{(1+\beta^2\epsilon^2)^2\sqrt{\frac{\alpha^2\epsilon^2}{1+\beta^2\epsilon^2}}}(1+\epsilon\sqrt{\frac{\pi}{2}})\leq\sqrt{\frac{\alpha^2v^2}{1+\beta^2v^2}},
	\end{equation} 
	which leads to $v^2-\epsilon^2\geq(\epsilon\sqrt{\frac{\pi}{2}}+1)\frac{1+\beta^2v^2}{1+\beta^2\epsilon^2}[\frac{\sqrt{\frac{\alpha^2\epsilon^2}{1+\beta^2\epsilon^2}}}{\sqrt{\frac{\alpha^2v^2}{1+\beta^2v^2}}}+1]$. 
	
	The right side of the inequality approaches maximum value as  $\beta\longrightarrow\infty$. Thus 
	$v^2-\epsilon^2\geq\frac{2c^2}{\epsilon^2}(\epsilon\frac{\pi}{2}+1)$, which solves to $v\geq\frac{\epsilon^2}{\sqrt{\epsilon^2-2(\epsilon\sqrt{\frac{\pi}{2}}+1)}}$. Because the defined $\epsilon\geq7$, the inequality holds when $v=3(\sqrt{M}+\sqrt{N})+4=\epsilon+3$.
\end{proof}
\section{Proof of  Proposition \ref{p2}}

\begin{proof}
	Based on the Theorem 7 in \cite{DBLP:journals/siamsc/LiuLJ22}, we have $\mathbb{P}\{\|\dot{\mathbf{G^\dagger}}\|_2\geq x\}\leq \bar{c}x^{-2(2N-2M+2)}$, where  $\bar{c}=\frac{\breve{c}}{4^{2N-2M+2}}$ and $\breve{c}=\frac{\pi^{-3}}{4(N-M+1)(2N-2M+3)}\cdot[\frac{e\sqrt{4N+2}}{2N-2M+2}]^{2(2N-2M+2)}$.
	
	Following a similar track in the proof of Proposition \ref{p1}, we have 
	\begin{equation}\label{a3}
		\begin{split}
			\mathbb{E}\left(\sqrt{\frac{\alpha^2\|\dot{\mathbf{\Omega}}^\dagger\|_2^2}{1+\beta^2\|\dot{\mathbf{\Omega}}^\dagger\|_2^2}}\right)&=\mathbb{E}\left(\sqrt{\frac{\alpha^2\|\dot{\mathbf{\Omega}}^{\dagger}_C\|_2^2}{1+\beta^2\|\dot{\mathbf{\Omega}}^{\dagger}_C\|_2^2}}\right)\\&=\int_0^\infty g^{'}(x)\mathbb{P}\{\|\dot{\mathbf{\Omega}}^{\dagger}\|_2\geq x\}dx\\&\leq\sqrt{\frac{\alpha^2c^2}{1+\beta^2c^2}}+\int_c^\infty \frac{\alpha^2x^2}{(1+\beta^2x^2)^2\sqrt{\frac{\alpha^2x^2}{1+\beta^2x^2}}}\bar{c}x^{-2(2N-2M+2)}dx\\&\leq\sqrt{\frac{\alpha^2c^2}{1+\beta^2c^2}}+\frac{\alpha^2\bar{c}}{(1+\beta^2c^2)^2\sqrt{\frac{\alpha^2c^2}{1+\beta^2c^2}}}\int_c^\infty x^{-4(N-M+1)+1}dx\\&=\sqrt{\frac{\alpha^2c^2}{1+\beta^2c^2}}+\frac{\alpha^2\bar{c}}{(1+\beta^2c^2)^2\sqrt{\frac{\alpha^2c^2}{1+\beta^2c^2}}}\cdot\frac{c^{-4(N-M+1)+2}}{4(N-M+1)-2}.
		\end{split}
	\end{equation}
	Let $N-M=q\geq0$, then we need to find a $v>0$ such that
	\begin{equation*}
	\sqrt{\frac{\alpha^2c^2}{1+\beta^2c^2}}+\frac{\alpha^2\bar{c}}{(1+\beta^2c^2)^2\sqrt{\frac{\alpha^2c^2}{1+\beta^2c^2}}}\cdot\frac{c^{-4(q+1)}}{4(q+1)-2}\leq\sqrt{\frac{\alpha^2v^2}{1+\beta^2v^2}}.
	\end{equation*} This equation is similar to equation \eqref{a2}, with difference being the coefficients in the second term on the left side. Thus, its solution similarly satisfies $v\geq\frac{c^2}{\sqrt{c^2-\frac{2\bar{c}c^2c^{-4(q+1)}}{4(q+1)-2}}}$. Next, substituting the value of $\bar{c}$ to the right side of the inequality, we have
\begin{equation*}
	v\geq\frac{c}{\sqrt{1-\frac{\pi^{-3}}{4(q+1)(2q+1)(2q+3)}(\frac{4(q+1)}{e\sqrt{4n+2}}c)^{-4(q+1)}}}.
\end{equation*} 
	The value $v=\frac{e\sqrt{4N+2}}{q+1}$ satisfies this inequality for $c=\frac{e\sqrt{4N+2}}{4(q+1)}(\frac{\pi^{-3}}{4(2q+1)(q+1)})^{\frac{1}{4(q+1)}}$.
\end{proof}

\section*{Acknowledgments}
We would like to acknowledge the assistance of volunteers in putting
together this example manuscript and supplement.

\bibliographystyle{siamplain}
\bibliography{references}

\begin{thebibliography}{10}

\bibitem{DBLP:journals/cma/BahriHHA08}
{\sc M.~Bahri, E.~S.~M. Hitzer, A.~Hayashi, and R.~Ashino}, {\em An uncertainty
  principle for quaternion fourier transform}, Comput. Math. Appl., 56 (2008),
  pp.~2398--2410.

\bibitem{DBLP:conf/icnn/Bayro-Corrochano96}
{\sc E.~Bayro{-}Corrochano, S.~Buchholz, and G.~Sommer}, {\em Selforganizing
  clifford neural network}, in Proceedings of International Conference on
  Neural Networks (ICNN'96), Washington, DC, USA, June 3-6, 1996, {IEEE}, 1996,
  pp.~120--125.

\bibitem{DBLP:journals/jscic/CheW22}
{\sc M.~Che and Y.~Wei}, {\em An efficient algorithm for computing the
  approximate t-urv and its applications}, J. Sci. Comput., 92 (2022), p.~93.

\bibitem{DBLP:journals/siamis/ChenN22}
{\sc J.~Chen and M.~K. Ng}, {\em Color image inpainting via robust pure
  quaternion matrix completion: Error bound and weighted loss}, {SIAM} J.
  Imaging Sci., 15 (2022), pp.~1469--1498.

\bibitem{DBLP:journals/tip/ChenJPP23}
{\sc Y.~Chen, Z.~Jia, Y.~Peng, and Y.~Peng}, {\em Efficient robust watermarking
  based on structure-preserving quaternion singular value decomposition},
  {IEEE} Trans. Image Process., 32 (2023), pp.~3964--3979.

\bibitem{DBLP:journals/sigpro/ChenJPPZ21}
{\sc Y.~Chen, Z.~Jia, Y.~Peng, Y.~Peng, and D.~Zhang}, {\em A new
  structure-preserving quaternion {QR} decomposition method for color image
  blind watermarking}, Signal Process., 185 (2021), p.~108088.

\bibitem{DBLP:journals/tip/ChenXZ20}
{\sc Y.~Chen, X.~Xiao, and Y.~Zhou}, {\em Low-rank quaternion approximation for
  color image processing}, {IEEE} Trans. Image Process., 29 (2020),
  pp.~1426--1439.

\bibitem{DBLP:journals/tfs/ChenZYSLW24}
{\sc Y.~Chen, S.~Zhu, H.~Yan, M.~Shen, X.~Liu, and S.~Wen}, {\em Event-based
  global exponential synchronization for quaternion-valued fuzzy memristor
  neural networks with time-varying delays}, {IEEE} Trans. Fuzzy Syst., 32
  (2024), pp.~989--999.

\bibitem{DBLP:conf/icpr/DaniilidisB96}
{\sc K.~Daniilidis and E.~Bayro{-}Corrochano}, {\em The dual quaternion
  approach to hand-eye calibration}, in 13th International Conference on
  Pattern Recognition, {ICPR} 1996, Vienna, Austria, 25-19 August, 1996, {IEEE}
  Computer Society, 1996, pp.~318--322.

\bibitem{eckart1936approximation}
{\sc C.~Eckart and G.~Young}, {\em The approximation of one matrix by another
  of lower rank}, Psychometrika, 1 (1936), pp.~211--218.

\bibitem{doi:10.1080/14786444408644923}
{\sc S.~W. R. H. L. P. F. H. M. R.~S. Ed. and D.~H. or~Corr.~M.}, {\em Ii. on
  quaternions; or on a new system of imaginaries in algebra}, Philosophical
  Magazine Series 3, 25 (1844), pp.~10--13.

\bibitem{DBLP:journals/na/FierroH97}
{\sc R.~D. Fierro and P.~C. Hansen}, {\em Low-rank revealing {UTV}
  decompositions}, Numer. Algorithms, 15 (1997), pp.~37--55.

\bibitem{DBLP:journals/siamsc/Gu15}
{\sc M.~Gu}, {\em Subspace iteration randomization and singular value
  problems}, {SIAM} J. Sci. Comput., 37 (2015).

\bibitem{DBLP:journals/siamrev/HalkoMT11}
{\sc N.~Halko, P.~Martinsson, and J.~A. Tropp}, {\em Finding structure with
  randomness: Probabilistic algorithms for constructing approximate matrix
  decompositions}, {SIAM} Rev., 53 (2011), pp.~217--288.

\bibitem{DBLP:journals/corr/Hitzer13d}
{\sc E.~Hitzer}, {\em Quaternion fourier transform on quaternion fields and
  generalizations}, CoRR, abs/1306.1023 (2013).

\bibitem{DBLP:journals/pr/HuangLLWZ22}
{\sc C.~Huang, Z.~Li, Y.~Liu, T.~Wu, and T.~Zeng}, {\em Quaternion-based
  weighted nuclear norm minimization for color image restoration}, Pattern
  Recognit., 128 (2022), p.~108665.

\bibitem{DBLP:journals/tsp/KalooraziL18}
{\sc M.~F. Kaloorazi and R.~C. de~Lamare}, {\em Subspace-orbit randomized
  decomposition for low-rank matrix approximations}, {IEEE} Trans. Signal
  Process., 66 (2018), pp.~4409--4424.

\bibitem{DBLP:journals/jscic/KeMJXL23}
{\sc Y.~Ke, C.~Ma, Z.~Jia, Y.~Xie, and R.~Liao}, {\em Quasi non-negative
  quaternion matrix factorization with application to color face recognition},
  J. Sci. Comput., 95 (2023), p.~38.

\bibitem{DBLP:journals/fss/LiC24}
{\sc R.~Li and J.~Cao}, {\em Stabilization and synchronization control of
  quaternion-valued fuzzy memristive neural networks: Nonlinear scalarization
  approach}, Fuzzy Sets Syst., 477 (2024), p.~108832.

\bibitem{DBLP:journals/sigpro/LingLYJ22}
{\sc S.~Ling, Y.~Li, B.~Yang, and Z.~Jia}, {\em Joint diagonalization for a
  pair of hermitian quaternion matrices and applications to color face
  recognition}, Signal Process., 198 (2022), p.~108560.

\bibitem{DBLP:journals/tii/LiuWZ15}
{\sc H.~Liu, X.~Wang, and Y.~Zhong}, {\em Quaternion-based robust attitude
  control for uncertain robotic quadrotors}, {IEEE} Trans. Ind. Informatics, 11
  (2015), pp.~406--415.

\bibitem{DBLP:journals/siamsc/LiuLJ22}
{\sc Q.~Liu, S.~Ling, and Z.~Jia}, {\em Randomized quaternion singular value
  decomposition for low-rank matrix approximation}, {SIAM} J. Sci. Comput., 44
  (2022), p.~870.

\bibitem{DBLP:journals/tip/LiuKMC23}
{\sc W.~Liu, K.~I. Kou, J.~Miao, and Z.~Cai}, {\em Quaternion scalar and vector
  norm decomposition: Quaternion {PCA} for color face recognition}, {IEEE}
  Trans. Image Process., 32 (2023), pp.~446--457.

\bibitem{DBLP:journals/ijon/LiuWCL24}
{\sc Y.~Liu, F.~Wu, M.~Che, and C.~Li}, {\em Fixed-precision randomized
  quaternion singular value decomposition algorithm for low-rank quaternion
  matrix approximations}, Neurocomputing, 580 (2024), p.~127490.

\bibitem{DBLP:journals/tsp/MiaoK20}
{\sc J.~Miao and K.~I. Kou}, {\em Quaternion-based bilinear factor matrix norm
  minimization for color image inpainting}, {IEEE} Trans. Signal Process., 68
  (2020), pp.~5617--5631.

\bibitem{DBLP:journals/sigpro/MiaoK23}
{\sc J.~Miao and K.~I. Kou}, {\em Quaternion tensor singular value
  decomposition using a flexible transform-based approach}, Signal Process.,
  206 (2023), p.~108910.

\bibitem{DBLP:journals/pr/MiaoKL20}
{\sc J.~Miao, K.~I. Kou, and W.~Liu}, {\em Low-rank quaternion tensor
  completion for recovering color videos and images}, Pattern Recognit., 107
  (2020), p.~107505.

\bibitem{DBLP:journals/kbs/MiaoKYC24}
{\sc J.~Miao, K.~I. Kou, L.~Yang, and D.~Cheng}, {\em Quaternion tensor train
  rank minimization with sparse regularization in a transformed domain for
  quaternion tensor completion}, Knowl. Based Syst., 284 (2024), p.~111222.

\bibitem{DBLP:journals/siamis/PanN23}
{\sc J.~Pan and M.~K. Ng}, {\em Separable quaternion matrix factorization for
  polarization images}, {SIAM} J. Imaging Sci., 16 (2023), pp.~1281--1307.

\bibitem{DBLP:journals/jscic/RenMLB22}
{\sc H.~Ren, R.~Ma, Q.~Liu, and Z.~Bai}, {\em Randomized quaternion {QLP}
  decomposition for low-rank approximation}, J. Sci. Comput., 92 (2022), p.~80.

\bibitem{doi:10.1080/01630563.2024.2318602}
{\sc Y.~W. Renjie~Xu, Shenghao~Feng and H.~Yan}, {\em Cur and generalized cur
  decompositions of quaternion matrices and their applications}, Numerical
  Functional Analysis and Optimization, 45 (2024), pp.~234--258.

\bibitem{DBLP:journals/tsp/Stewart92}
{\sc G.~W. Stewart}, {\em An updating algorithm for subspace tracking}, {IEEE}
  Trans. Signal Process., 40 (1992), pp.~1535--1541.

\bibitem{DBLP:journals/siammax/Stewart93}
{\sc G.~W. Stewart}, {\em Updating a rank-revealing {ULV} decomposition},
  {SIAM} J. Matrix Anal. Appl., 14 (1993), pp.~494--499.

\bibitem{DBLP:journals/siamsc/Stewart99}
{\sc G.~W. Stewart}, {\em The {QLP} approximation to the singular value
  decomposition}, {SIAM} J. Sci. Comput., 20 (1999), pp.~1336--1348.

\bibitem{DBLP:journals/firai/SunHFXYMW23}
{\sc L.~Sun, Y.~Huang, H.~Fei, B.~Xiao, E.~M. Yeatman, A.~Montazeri, and
  Z.~Wang}, {\em Fixed-time regulation of spacecraft orbit and attitude
  coordination with optimal actuation allocation using dual quaternion},
  Frontiers Robotics {AI}, 10 (2023).

\bibitem{DBLP:journals/cphysics/WangM13}
{\sc M.~Wang and W.~Ma}, {\em A structure-preserving method for the quaternion
  {LU} decomposition in quaternionic quantum theory}, Comput. Phys. Commun.,
  184 (2013), pp.~2182--2186.

\bibitem{DBLP:journals/jscic/WuMLZZ22}
{\sc T.~Wu, Z.~Mao, Z.~Li, Y.~Zeng, and T.~Zeng}, {\em Efficient color image
  segmentation via quaternion-based l\({}_{\mbox{1/l\({}_{\mbox{2}}\)}}\)
  regularization}, J. Sci. Comput., 93 (2022), p.~9.

\bibitem{DBLP:journals/ral/WuWZY24}
{\sc Z.~Wu, Z.~Wang, H.~Zhang, and H.~Yan}, {\em Quaternion-based optimal
  interpolation of similarity transformations for multi-agent formation},
  {IEEE} Robotics Autom. Lett., 9 (2024), pp.~5783--5790.

\bibitem{zhang1997quaternions}
{\sc F.~Zhang}, {\em Quaternions and matrices of quaternions}, Linear Algebra
  Appl., 251 (1997), pp.~21--57.

\bibitem{DBLP:journals/na/ZhangWLZ24}
{\sc F.~Zhang, M.~Wei, Y.~Li, and J.~Zhao}, {\em The forward rounding error
  analysis of the partial pivoting quaternion {LU} decomposition}, Numer.
  Algorithms, 96 (2024), pp.~267--288.

\bibitem{DBLP:journals/sigpro/ZhangDLSL23}
{\sc M.~Zhang, W.~Ding, Y.~Li, J.~Sun, and Z.~Liu}, {\em Color image
  watermarking based on a fast structure-preserving algorithm of quaternion
  singular value decomposition}, Signal Process., 208 (2023), p.~108971.

\end{thebibliography}
\end{document}